\newcommand{\lin}{\hbox{Lin}}
\newcommand{\non}{\hbox{Non}}
\newcommand{\iso}{\hbox{Isot}}
\newcommand{\con}{\hbox{Cont}}
\newcommand{\ocs}{\hbox{OccSet}}
\newcommand{\var}{\hbox{var}}
\newtheorem{theorem}{Theorem}[section]
\newtheorem{ex}[theorem]{Example}
\newtheorem{cor}[theorem]{Corollary}
\newtheorem{fact}[theorem]{Fact}
\newtheorem{lemma}[theorem]{Lemma}
\newtheorem{definition}[theorem]{Definition}
\newtheorem{prop}[theorem]{Proposition}
\newtheorem{question}{Question}
\begin{document}

\title{The finite basis problem for words with at most two non-linear variables}
\author{ Olga Sapir }

\address{Nashville,TN,USA}
\email{olga.sapir@gmail.com}
\date{}

\begin{abstract}  Let $\mathfrak A$ be an alphabet and $W$ be a set of words in the free monoid ${\mathfrak A}^*$. Let $S(W)$ denote the Rees quotient  over the ideal of  ${\mathfrak A}^*$ consisting of all words that are not subwords of words in $W$. We call a set of words $W$ {\em finitely based}  if the monoid $S(W)$ is finitely based.

We find a simple algorithm that recognizes finitely based words among words with at most two non-linear variables. We also describe syntactically all hereditary finitely based monoids of the form $S(W)$.
\end{abstract}

\maketitle

\section{Introduction}

An algebra  is said to be {\em finitely based} (FB) if there is a finite subset of its identities from which all of its identities may be deduced.
Otherwise, an algebra is said to be {\em non-finitely based} (NFB).
The famous Tarski's Finite Basis Problem asks if there is an algorithm to decide when a finite algebra is finitely based.
In 1996, R. McKenzie  \cite{RM} solved this problem in the negative showing that the classes of FB and inherently not finitely based finite algebras are recursively inseparable. (A locally finite algebra is said to be {\em inherentely not finitely based} (INFB) if any locally finite variety containing it is NFB.)

It is still unknown whether the set of FB finite semigroups is recursive although a very large volume of work is devoted to this problem (see
the surveys \cite{SV,MV}). In contrast with McKenzie's result, a powerful description of the INFB finite semigroups has been obtained by M. Sapir \cite{MS, MS1}.
 These results show that we need to concentrate on NFB finite semigroups that are not INFB.

In 1968, P. Perkins \cite{P} found the first two examples of finite NFB semigroups.  One of these examples was the 25-element monoid obtained from the set of words
$W= \{abtba, atbab, abab, aat\}$ by using the following construction attributed to Dilworth.

 Let ${\mathfrak A}$ be an alphabet and $W$ be a set of words in the free monoid ${\mathfrak A}^*$. Let $S(W)$ denote the Rees quotient  over the ideal of  ${\mathfrak A}^*$ consisting of all words that are not subwords of words in $W$. For each set of words $W$, the semigroup $S(W)$ is a monoid with zero whose nonzero elements are the subwords of words in $W$. Evidently, $S(W)$ is finite if and only if $W$ is finite.

It is clear from the results of \cite{MS, MS1} that a finite monoid of the form $S(W)$ is never INFB.
It is shown in \cite{JS} that, with respect to the finite basis problem, the class of monoids of the form $S(W)$, shares all of the currently known bad  properties held by the class of all finite semigroups. In particular, the set of FB semigroups and the set of NFB semigroups in this class are not closed under taking direct products, and there exists an infinite chain of varieties generated by such semigroups where FB and NFB varieties alternate. Recently, a certain finite monoid of the form $S(W)$ (see Theorem 3.2 in \cite{CHLS}) emerged as the one responsible for the non-finite basis property of such infinite semigroups as the bicyclic monoid \cite{Pastin, Sh} and the monoid of $2 \times 2$ upper triangular tropical matrices.

We call a set of words $W$ {\em finitely based} if the monoid $S(W)$ is finitely based. In this paper  we study the following problem.

\begin{question}\cite[M. Sapir]{SV} \label{qMS} Is the set of finite finitely based sets of words recursive?
\end{question}

A partial answer to Question \ref{qMS} is contained in \cite[Theorem 5.1]{OS}. That theorem says that a word $\bf u$ in a two-letter alphabet $\{a,b\}$ is FB if and only if $\bf u$ is of the form $a^nb^m$ or $a^nba^m$ for some $n,m\ge 0$ modulo renaming $a$ and $b$.
If a variable $t$ occurs exactly once in a word ${\bf u}$ then we say that $t$ is {\em linear} in ${\bf u}$. If a variable $x$ occurs more than once in a word ${\bf u}$ then we say that $x$ is {\em non-linear} in ${\bf u}$.
In this article, we generalize Theorem 5.1 in \cite{OS} into an algorithm which given a word $\bf U$ with at most two non-linear variables, decides whether $\bf U$ is
finitely based or not.

A word $\bf u$ is said to be an {\em isoterm} for a semigroup $S$ if $S$ does not satisfy any nontrivial identity of the form ${\bf u} \approx {\bf v}$.
The notion of an isoterm was introduced by Perkins in \cite{P} and has proved to be crucial for understanding the difference between finitely based and non-finitely based semigroups. According to \cite{MS}, a finite semigroup $S$ is INFB  iff every Zimin word (${\bf Z}_1=x_1, \dots, {\bf Z}_{k+1} = {\bf Z}_kx_{k+1}{\bf Z}_k, \dots$)  is an isoterm for $S$ iff the word ${\bf Z}_k$ is an isoterm for $S$ where $k=|S|^2$.
If $S$ is a finite aperiodic semigroup with central idempotents then according to \cite{MJ}, every subvariety of $S$ is finitely based if and only if the word ${\bf Z}_2=xtx$ is not an isoterm for $S$.

It is not a surprise that the notion of an isoterm plays a crucial role in this article as well. In Theorem \ref{main}, we prove that a word $\bf U$ with at most two non-linear variables is FB if and only if certain words are isoterms for $S(\{{\bf U}\})$ and certain words are not.
In Theorem \ref{main2}, we present our algorithm in a computation-free form.
This work was inspired by the article \cite{LZL} where all finitely based words with two non-linear 2-occurring variables are described.

\section{A quasi-order on sets of words and how to check that a monoid of the form $S(W)$ satisfies a balanced identity}

Throughout this article, elements of a countable alphabet $\mathfrak A$ are called {\em variables} and elements of the free monoid $\mathfrak A^*$ are called {\em words}. 
We use $\var S$ to denote the variety generated by a semigroup $S$ and $\operatorname{var} \Sigma$ to denote the variety defined by a set of identities $\Sigma$.

\begin{lemma} \label{prec}  \cite[Lemma 3.3]{MJ}
Let $W$ be a set of words and $S$ be a monoid.
Then each word in $W$ is an isoterm for $S$ if and only if $\operatorname{var}(S)$ contains $S(W)$.
\end{lemma}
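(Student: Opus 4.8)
The plan is to prove the two implications of the equivalence separately, relying throughout on the explicit description of $S(W)$: its non-zero elements are exactly the subwords of words in $W$ (the empty word $\epsilon$ included, acting as the identity), and the product of two non-zero elements $u,v$ is the concatenation $uv$ when $uv$ is again a subword of some word in $W$, and is $0$ otherwise.

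For the implication ``$S(W)\in\var(S)\Rightarrow$ every word in $W$ is an isoterm for $S$'', I would argue by contraposition. If some $\mathbf{u}\in W$ is not an isoterm for $S$, so that $S$ satisfies a non-trivial identity $\mathbf{u}\approx\mathbf{v}$, then $S(W)$ satisfies it too. First normalise $\mathbf{v}$ by substituting $\epsilon$ for its variables not occurring in $\mathbf{u}$, and then evaluate the resulting identity in $S(W)$ by the substitution fixing every variable of $\mathbf{u}$: the left-hand side takes the non-zero value $\mathbf{u}$, while the right-hand side takes the value $0$, or a subword of a word in $W$ equal, as a word, to the normalised $\mathbf{v}$; in either case it differs from $\mathbf{u}$ unless the normalisation has collapsed $\mathbf{v}$ to $\mathbf{u}$. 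That last situation forces $S$ to satisfy an identity $\epsilon\approx w$ with $w$ non-empty, which is itself impossible: evaluating such an identity against a non-empty word of $W$ shows that word not to be an isoterm (the degenerate case $W\subseteq\{\epsilon\}$ being checked by hand). So in all cases we contradict $S(W)\models\mathbf{u}\approx\mathbf{v}$.

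For the converse I would show that if every word in $W$ is an isoterm for $S$ then $S(W)$ satisfies every identity of $S$, which is exactly the assertion $S(W)\in\var(S)$. So let $\mathbf{p}\approx\mathbf{q}$ hold in $S$ and let $\phi$ be an arbitrary substitution of the variables into $S(W)$; the goal is $\phi(\mathbf{p})=\phi(\mathbf{q})$. If both values are $0$ there is nothing to prove, so assume $\phi(\mathbf{p})\neq 0$. Then $\phi$ sends every variable of $\mathbf{p}$ to a (possibly empty) subword of a word in $W$, and $\phi(\mathbf{p})$ equals the word $w$ obtained by concatenating these subwords in order; in particular $w$ is a subword of some $\mathbf{W}_0\in W$. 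View the restriction of $\phi$ to the variables of $\mathbf{p}$ as a substitution $\sigma$ into $\mathfrak{A}^*$, extending it over the remaining variables of $\mathbf{q}$ after checking that those too are sent by $\phi$ to non-zero elements --- this is where one uses that $S$ cannot satisfy an identity whose two sides involve different sets of variables, since such an identity, evaluated against a non-empty word of $W$, would show that word not to be an isoterm. Applying $\sigma$ to $\mathbf{p}\approx\mathbf{q}$ gives $S\models w\approx\sigma(\mathbf{q})$. Since a subword of an isoterm for a monoid is again an isoterm (substitute identically into the isoterm surrounding the subword, using $\epsilon$ to absorb auxiliary variables), $w$ is an isoterm for $S$, hence $w=\sigma(\mathbf{q})$ as words. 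Finally, every prefix of $w$ is a subword of $\mathbf{W}_0$, so none of the partial products computing $\phi(\mathbf{q})$ in $S(W)$ equals $0$; therefore $\phi(\mathbf{q})$ equals the word $\sigma(\mathbf{q})=w=\phi(\mathbf{p})$, as required.

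The step I expect to be the main obstacle is the bookkeeping in the converse: passing rigorously from an evaluation of $\mathbf{p}\approx\mathbf{q}$ in $S(W)$ to an honest substitution into the free monoid, and, along the way, disposing of variables occurring on only one side of the identity or sent by $\phi$ to $\epsilon$ or $0$. All of these are controlled by the single observation that the isoterm hypothesis forbids $S$ from satisfying any variable-unbalanced identity, so it is worth isolating that observation --- together with the auxiliary fact that subwords of isoterms for a monoid are isoterms --- as preliminary remarks before the main argument.
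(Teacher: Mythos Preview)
The paper does not prove this lemma at all: it is quoted verbatim from \cite[Lemma~3.3]{MJ} and used as a black box. Your proposal therefore supplies what the paper omits, and the converse direction (every word in $W$ an isoterm $\Rightarrow S(W)\in\var(S)$) is carried out correctly: the passage from an $S(W)$-evaluation $\phi$ with $\phi(\mathbf p)\ne 0$ to a genuine free-monoid substitution $\sigma$, the appeal to ``subwords of isoterms for a monoid are isoterms'', and the observation that all partial products of $\phi(\mathbf q)$ are prefixes of $w$ and hence non-zero, are exactly the right steps.

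There is one small logical slip in the forward direction. You argue by contraposition (assume some $\mathbf u\in W$ is not an isoterm, derive $S(W)\notin\var(S)$), and in the edge case where the normalised $\mathbf v$ collapses to $\mathbf u$ you correctly extract $S\models\epsilon\approx w$ with $w$ non-empty. But then you write that this ``shows that [non-empty] word [of $W$] not to be an isoterm'': under the contrapositive hypothesis that is not a contradiction, since you have already assumed some word of $W$ fails to be an isoterm. What you actually need here is $S(W)\not\models\epsilon\approx w$, and that is immediate: since $\mathbf u\in W$ we have $W\ne\emptyset$, so $0\ne 1$ in $S(W)$, and sending any variable of $w$ to $0$ gives $1\ne 0$. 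With this one-line repair (or, equivalently, by framing the whole forward direction as a direct proof rather than a contraposition), your argument is complete.
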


If $W$ and $W'$ are two sets of words then we write $W \preceq W'$ if for any monoid $S$ each word in $W'$ is an isoterm for $S$ whenever  each word in $W$ is an isoterm for $S$. It is easy to see that the relation $\preceq$ is reflexive and transitive, i.e. it is a {\em quasi-order} on sets of words. If $W \preceq W' \preceq W$ then we write $W \sim W'$.
We say that two sets of words $W$ and $W'$ are equationally equivalent if the monoids $S(W)$ and $S(W')$ satisfy the same identities.
The following proposition shows that if we identify sets of words modulo $\sim$ then we obtain an ordered set antiisomorphic to
the set of all varieties of the form $\var S(W)$ ordered under inclusion. In particular, two sets of words $W$ and $W'$ are equationaly equivalent if and only if $W \sim W'$.

\begin{prop} \label{pr1} For two sets of words $W$ and $W'$
the following conditions are equivalent.

(i)  $W \preceq W'$.

(ii) Each word in $W'$ is an isoterm for $S(W)$.

(iii) $\operatorname{var} S(W)$ contains  $S(W')$.

\end{prop}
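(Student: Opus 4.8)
The plan is to prove the cycle of implications $(i)\Rightarrow(ii)\Rightarrow(iii)\Rightarrow(i)$, leaning on Lemma~\ref{prec} to translate between the isoterm condition and the containment of monoids of the form $S(W)$. First I would record the only genuinely new input needed: every word in $W$ is an isoterm for $S(W)$ itself. This is essentially the defining property of the Rees-quotient construction --- a nontrivial identity ${\bf u}\approx{\bf v}$ with ${\bf u}\in W$ would force ${\bf v}$ to be a nonzero element of $S(W)$ equal to ${\bf u}$, which is impossible since distinct subwords of words in $W$ are distinct nonzero elements --- and I would either cite it from the background literature (it is standard, e.g.\ in the sources \cite{MJ,OS}) or give the one-line argument. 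Equivalently, by Lemma~\ref{prec} applied with $S=S(W)$, this says $\var S(W)\supseteq S(W)$, which is trivially true.

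For $(i)\Rightarrow(ii)$: apply the just-noted fact that each word in $W$ is an isoterm for $S(W)$; then the definition of $W\preceq W'$, instantiated at the monoid $S=S(W)$, immediately yields that each word in $W'$ is an isoterm for $S(W)$. For $(ii)\Rightarrow(iii)$: this is a direct application of Lemma~\ref{prec} with $S=S(W)$ --- each word in $W'$ being an isoterm for $S(W)$ is exactly the statement that $\var S(W)$ contains $S(W')$. For $(iii)\Rightarrow(i)$: suppose $\var S(W)\supseteq S(W')$, and let $S$ be any monoid for which each word in $W$ is an isoterm. By Lemma~\ref{prec}, $\var S\supseteq S(W)$, hence $\var S\supseteq\var S(W)\supseteq S(W')$, so again by Lemma~\ref{prec} each word in $W'$ is an isoterm for $S$; as $S$ was arbitrary this gives $W\preceq W'$.

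The argument is almost entirely formal once Lemma~\ref{prec} is in hand, so there is no real obstacle; the one point deserving care is the base fact that each word of $W$ is an isoterm for $S(W)$, i.e.\ that $\preceq$ is genuinely reflexive in the strong sense needed to start the cycle. I would make sure to state this explicitly rather than treat it as obvious. The remaining remarks in the paragraph preceding the proposition --- that $\sim$-classes are antiisomorphic to the poset of varieties $\var S(W)$, and that equational equivalence coincides with $\sim$ --- then follow: $(i)\Leftrightarrow(iii)$ says the map $W\mapsto\var S(W)$ is order-reversing and order-reflecting, and two sets are equationally equivalent iff $\var S(W)=\var S(W')$ iff ($W\preceq W'$ and $W'\preceq W$) iff $W\sim W'$.
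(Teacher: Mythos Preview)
Your proposal is correct and follows essentially the same route as the paper: the cycle $(i)\Rightarrow(ii)\Rightarrow(iii)\Rightarrow(i)$ using Lemma~\ref{prec} at each nontrivial step, with the starting observation that every word in $W$ is an isoterm for $S(W)$. The paper's proof is slightly terser (it asserts the base fact without justification), so your explicit remark on that point and your derivation of the antiisomorphism consequence are minor expository additions, not a different argument.
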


\begin{proof} (i) $\rightarrow$ (ii)  Since  each word ${\bf w} \in W$ is an isoterm for $S(W)$,   each word ${\bf w'} \in W'$ is also an isoterm for $S(W)$.

 (ii) $\rightarrow$ (iii) Since  each word ${\bf w'} \in W'$ is an isoterm for $S(W)$, Lemma \ref{prec} implies that the variety generated by  $S(W)$ contains  $S(W')$.

 (iii) $\rightarrow$ (i) Let $S$ be a monoid such that each word ${\bf w} \in W$ is an isoterm for $S$. Then by Lemma \ref{prec} $\operatorname{var}(S)$ contains $S(W)$.
 Since the variety generated by  $S(W)$ contains  $S(W')$, $\operatorname{var}(S)$ contains $S(W')$. Therefore,
 by Lemma \ref{prec} each word ${\bf w'} \in W'$ is an isoterm for $S$.
\end{proof}

 The relations $\preceq$ and $\sim$ can be extended to individual words. For example, if $\bf u$ and $\bf v$ are two words then ${\bf u} \sim {\bf v}$ means $\{{\bf u}\} \sim \{{\bf v}\}$. Also, if $W$ is a set of words and $\bf u$ is a word then $W \preceq {\bf u}$  means $W \preceq \{{\bf u}\}$.

We use $W^{\le}$ to denote the closure of $W$ under taking subwords and $\langle  W\uparrow \rangle$ to denote the closure of $W$ under going up in order $\preceq$. It is easy to see that $W \subseteq W^{\le} \subseteq \langle  W\uparrow \rangle$ and $W \sim W^{\le} \sim \langle W \uparrow \rangle$. If $W$ is finite then $W^{\le}$ is also finite. On the other hand, if the set $W^{\le}$ contains $ab$ then the set  $\langle  W\uparrow \rangle$ is always infinite, because it contains the words $t_1t_2 \dots t_n$ for arbitrary $n>0$. Proposition \ref{pr1} immediately implies the following.

\begin{prop} \label{pr2} For two sets of words $W$ and $N$ the following
conditions are equivalent:

(i) $W \not \preceq {\bf n}$ for any ${\bf n} \in N$;

(ii) $\operatorname{var} S(W)$ contains none of $S(\{{\bf n}\})$ for any ${\bf n} \in N$;

(iii) ${\bf n} \not \in \langle W \uparrow \rangle$ for any ${\bf n} \in N$.

\end{prop}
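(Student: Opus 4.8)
The proof of Proposition \ref{pr2} should be a short, formal derivation from Proposition \ref{pr1}, since the statement is essentially the ``negated'' or ``pointwise'' version of that result. The plan is to establish the cycle of implications (i) $\to$ (ii) $\to$ (iii) $\to$ (i) by invoking the corresponding equivalences of Proposition \ref{pr1} applied to each singleton set $\{{\bf n}\}$ with ${\bf n} \in N$, and then taking contrapositives.

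First I would note that for a fixed word ${\bf n}$, Proposition \ref{pr1} (applied with $W' = \{{\bf n}\}$) gives the equivalence of the three statements: $W \preceq {\bf n}$; the word ${\bf n}$ is an isoterm for $S(W)$; and $\var S(W)$ contains $S(\{{\bf n}\})$. Negating each of these three equivalent statements yields: $W \not\preceq {\bf n}$; ${\bf n}$ is not an isoterm for $S(W)$; and $\var S(W)$ does not contain $S(\{{\bf n}\})$. Quantifying ``for all ${\bf n} \in N$'' over each of these then gives (i), a reformulation of (ii), and (ii) of the proposition, respectively. So the only genuine content beyond Proposition \ref{pr1} is the identification of ``${\bf n} \not\in \langle W \uparrow \rangle$'' with ``${\bf n}$ is not an isoterm for $S(W)$.''

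For that last identification I would use the facts recorded just before the statement: $W \sim \langle W \uparrow \rangle$ and $\langle W \uparrow \rangle$ is closed under going up in $\preceq$. Since $W \sim \langle W \uparrow \rangle$, the monoids $S(W)$ and $S(\langle W \uparrow \rangle)$ are equationally equivalent, so ${\bf n}$ is an isoterm for $S(W)$ iff ${\bf n}$ is an isoterm for $S(\langle W \uparrow \rangle)$; by Proposition \ref{pr1}(ii)$\leftrightarrow$(i) the latter says $\langle W \uparrow \rangle \preceq {\bf n}$, which by the upward-closedness of $\langle W \uparrow \rangle$ holds precisely when ${\bf n} \in \langle W \uparrow \rangle$. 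Taking the contrapositive gives exactly the equivalence of (ii) and (iii) after quantifying over ${\bf n} \in N$.

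I do not anticipate a serious obstacle here; the whole argument is bookkeeping with contrapositives and the already-established Proposition \ref{pr1}. The one point requiring a little care is making the membership criterion ``${\bf n} \in \langle W \uparrow \rangle \iff W \preceq {\bf n}$'' fully explicit — strictly, $\langle W \uparrow \rangle$ is \emph{defined} as the upward closure of $W$, so ${\bf n} \in \langle W \uparrow \rangle$ means $W \preceq {\bf n}$ by definition, and no appeal to $W \sim \langle W \uparrow \rangle$ is even needed for that half. With that observation the proof reduces to: ${\bf n} \not\in \langle W \uparrow \rangle \iff W \not\preceq {\bf n} \iff {\bf n}$ is not an isoterm for $S(W) \iff \var S(W) \not\supseteq S(\{{\bf n}\})$, each biconditional coming from Proposition \ref{pr1}, and then one simply quantifies over all ${\bf n} \in N$.
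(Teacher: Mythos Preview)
Your proposal is correct and matches the paper's approach: the paper simply asserts that Proposition~\ref{pr2} follows immediately from Proposition~\ref{pr1}, and your argument---apply Proposition~\ref{pr1} with $W'=\{{\bf n}\}$, negate, and quantify over ${\bf n}\in N$, using that ${\bf n}\in\langle W\uparrow\rangle$ means $W\preceq{\bf n}$ by definition---is exactly the intended unpacking. Your final paragraph gives the cleanest version; the earlier detour through $W\sim\langle W\uparrow\rangle$ is unnecessary, as you yourself note.
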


We use the word {\em substitution} to refer to the homomorphisms of the free monoid. Since every substitution $\Theta$ is uniquely determined by its values on the letters of the alphabet $\mathfrak A$, we write $\Theta: \mathfrak A \rightarrow \mathfrak A^*$  to remind that the empty word $\epsilon$ is in the range of values.
If $\mathfrak X$ is a set of variables then we write ${\bf u}(\mathfrak X)$ to refer to the word obtained from ${\bf u}$ by deleting all occurrences of all variables that are not in $\mathfrak X$ and say that the word ${\bf u}$ {\em deletes} to the word ${\bf u}(\mathfrak X)$. If $\mathfrak X = \{y_1, \dots, y_k\} \cup \mathfrak Y$ for some variables $y_1, \dots, y_k$ and a set of variables $\mathfrak Y$ then instead of ${\bf u}(\{y_1, \dots, y_k\} \cup \mathfrak Y)$ we simply write ${\bf u}(y_1, \dots, y_k, \mathfrak Y)$.
We say that a set of variables $\mathfrak X$ is {\em stable} in an identity $\bf u \approx \bf v$ if ${\bf u}(\mathfrak X)={\bf v}(\mathfrak X)$. Otherwise, we say that set $\mathfrak X$ is {\em unstable} in $\bf u \approx \bf v$. In particular, a variable $x$ is stable in ${\bf u} \approx {\bf v}$ if and only if it occurs the same number of times in $\bf u$ and $\bf v$.  An identity ${\bf u} \approx {\bf v}$ is called {\em balanced} if every variable is stable in ${\bf u} \approx {\bf v}$.
If a semigroup $S$ satisfies all identities in a set $\Sigma$ then we write $S \models \Sigma$.

\begin{lemma} \label{lemmaOS} \cite[Lemma 2.5]{OS} Let $W$ be a set of words and $\bf u \approx \bf v$ be a balanced identity. Suppose that for every pair of variables $\{x, y\}$ unstable in $\bf u \approx \bf v$ and every substitution $\Theta: \mathfrak A \rightarrow \mathfrak A^*$ such that
 $\Theta(y) \Theta(x) \ne \Theta(x) \Theta(y)$, neither $\Theta({\bf u})$ nor $\Theta({\bf v})$ belongs to $W^{\le}$. Then $S(W) \models \bf u \approx \bf v$.
\end{lemma}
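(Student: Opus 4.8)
The plan is to unwind what it means for $S(W)$ to satisfy ${\bf u} \approx {\bf v}$ and reduce everything to a purely combinatorial claim about rearranging words. A substitution into $S(W)$ assigns to each variable either the zero of $S(W)$ or a word lying in $W^c$. If some variable occurring in ${\bf u} \approx {\bf v}$ is sent to $0$ then, since the identity is balanced and hence its two sides involve exactly the same variables, both sides evaluate to $0$ and there is nothing to check; variables not occurring in the identity are irrelevant. So it suffices to consider a homomorphism $\Theta\colon \mathfrak A \to \mathfrak A^*$ of free monoids all of whose relevant values lie in $W^c$, and to verify that $\Theta({\bf u})$ and $\Theta({\bf v})$ represent the same element of $S(W)$. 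Recalling that in $S(W)$ a concatenation evaluates to itself if it belongs to $W^c$ and to $0$ otherwise (and that $\epsilon$, the identity of $S(W)$, is always in $W^c$), if neither $\Theta({\bf u})$ nor $\Theta({\bf v})$ is in $W^c$ then both sides are $0$. Thus the lemma reduces to the claim that \emph{if $\Theta({\bf u})\in W^c$ (the case $\Theta({\bf v})\in W^c$ being symmetric), then $\Theta({\bf u})=\Theta({\bf v})$ as words} — for then $\Theta({\bf v})\in W^c$ as well and both sides are the same nonzero element.

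To prove this claim I would first extract the needed information from the hypothesis by contraposition: if $\Theta({\bf u})\in W^c$, then for every pair $\{x,y\}$ unstable in ${\bf u} \approx {\bf v}$ we cannot have $\Theta(y)\Theta(x)\ne\Theta(x)\Theta(y)$, i.e.\ $\Theta$ sends every unstable pair of variables to a commuting pair of words. (One should note that commuting need not be transitive across the identity — e.g.\ if some $\Theta(z)=\epsilon$ — and the argument below is designed never to use transitivity.)

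The combinatorial heart of the proof is then to transform ${\bf u}$ into ${\bf v}$ by a sequence of transpositions of adjacent letters, each transposition interchanging occurrences of two \emph{distinct} variables that form an \emph{unstable} pair. Since ${\bf u} \approx {\bf v}$ is balanced, ${\bf v}$ is a rearrangement of ${\bf u}$; I would fix the matching of occurrences that preserves the relative order of the occurrences of each individual variable, and sort ${\bf u}$ into ${\bf v}$ by repeatedly swapping any adjacent pair of occurrences that is currently out of order with respect to this matching. A standard bubble-sort argument shows that this terminates at ${\bf v}$ and that a pair of occurrences gets swapped — exactly once — precisely when it is inverted in ${\bf u}$ versus ${\bf v}$. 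Two occurrences of the same variable are never inverted, by choice of matching, so every swap interchanges occurrences of distinct variables $x\ne y$; moreover an $x$-occurrence is inverted with a $y$-occurrence only when ${\bf u}(x,y)\ne {\bf v}(x,y)$, that is, only when $\{x,y\}$ is unstable. By the previous step $\Theta(x)$ and $\Theta(y)$ then commute, so the two words related by such a swap have equal image under $\Theta$. Composing over all swaps yields $\Theta({\bf u})=\Theta({\bf v})$, which completes the proof. The step I expect to be the main obstacle is precisely this last paragraph — choosing the occurrence matching correctly and checking that every swap actually performed during the sort corresponds to an unstable pair of variables (in particular that one never has to push an $x$-occurrence past a $y$-occurrence when $\{x,y\}$ is stable).
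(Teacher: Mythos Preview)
The paper does not actually prove this lemma: it is quoted verbatim from \cite[Lemma~2.5]{OS} and used as a black box, so there is no ``paper's own proof'' to compare against. That said, your argument is correct and is essentially the natural proof one would give. The reduction to substitutions $\Theta$ with values in $W^c$ is right, and the key combinatorial step --- matching the $k$th occurrence of each variable in ${\bf u}$ to the $k$th occurrence in ${\bf v}$, then bubble-sorting --- works exactly as you describe: with this matching, two occurrences of the \emph{same} variable are never inverted, and if the $k$th occurrence of $x$ and the $l$th occurrence of $y$ are inverted then the projections ${\bf u}(x,y)$ and ${\bf v}(x,y)$ already disagree, so $\{x,y\}$ is unstable. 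Since bubble sort (always swapping an adjacent inverted pair) swaps each inverted pair exactly once and no non-inverted pair, every transposition you perform involves an unstable pair, hence a $\Theta$-commuting pair, and $\Theta({\bf u})=\Theta({\bf v})$ follows. Your caution about non-transitivity of commuting (via the empty word) is well placed, and your argument indeed never invokes it.
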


\begin{cor} \label{nfbcombinations} Let $L=L^{\le}$ and $N$ be sets of words and ${\bf u} \approx {\bf v}$ be a balanced identity.
Let $W \subseteq L$ be such that $W \not \preceq {\bf n}$ for any ${\bf n} \in N$.

Suppose that for every pair of variables $\{x, y\}$ unstable in ${\bf u} \approx {\bf v}$ and every substitution $\Theta: \mathfrak A \rightarrow \mathfrak A ^*$  such that $\Theta(x)$ contains some $a \in \mathfrak A$ and $\Theta(y)$ contains $b\ne a$, each of the following conditions is satisfied.

(i) If  $\Theta({\bf u}) \in L$ then $\Theta({\bf u}) \preceq {\bf n}$  for some ${\bf n} \in N$.

(ii) If  $\Theta({\bf v}) \in L$ then $\Theta({\bf v}) \preceq {\bf n}$  for some ${\bf n} \in N$.

Then $S(W) \models {\bf u} \approx {\bf v}$.

\end{cor}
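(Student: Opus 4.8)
The plan is to deduce Corollary~\ref{nfbcombinations} directly from Lemma~\ref{lemmaOS}, with the sets $N$ and $W$ playing an auxiliary role only at the very end. The key observation is that the hypotheses of the corollary are almost verbatim the hypotheses of the lemma, \emph{provided} we can upgrade the conditions ``$\Theta(x)$ contains some $a$, $\Theta(y)$ contains $b\neq a$'' to the condition ``$\Theta(y)\Theta(x)\neq\Theta(x)\Theta(y)$'' appearing in the lemma. First I would record the elementary fact about free monoids that if $\Theta(x)$ and $\Theta(y)$ do not commute, then they cannot be powers of a common word, and in particular $\Theta(x)$ must contain some letter $a$ while $\Theta(y)$ must contain some letter $b\neq a$ (otherwise both words would be powers of a single letter and hence commute, and the case where one of them is empty is immediate). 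So every substitution relevant to the lemma is also relevant to the hypotheses (i) and (ii) of the corollary.

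Next I would fix an unstable pair $\{x,y\}$ in ${\bf u}\approx{\bf v}$ and a substitution $\Theta$ with $\Theta(y)\Theta(x)\neq\Theta(x)\Theta(y)$, and argue that $\Theta({\bf u})\notin W^c$ and $\Theta({\bf v})\notin W^c$. Since $W\subseteq L$ and $L=L^c$, we have $W^c\subseteq L$, so it suffices to show $\Theta({\bf u})\notin L$ and $\Theta({\bf v})\notin L$ would already be too strong --- instead I would argue contrapositively using $W^c$. Suppose, say, $\Theta({\bf u})\in W^c$. Then $\Theta({\bf u})\in L$, so by hypothesis (i) there is some ${\bf n}\in N$ with $\Theta({\bf u})\preceq{\bf n}$. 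On the other hand $\Theta({\bf u})\in W^c$ means $\Theta({\bf u})$ is a subword of a word in $W$, hence $W\preceq W^c\preceq\{\Theta({\bf u})\}\preceq\{{\bf n}\}$, using $W\sim W^c$ and the fact that going to a subword only weakens the isoterm condition. This gives $W\preceq{\bf n}$ for some ${\bf n}\in N$, contradicting the hypothesis $W\not\preceq{\bf n}$ for all ${\bf n}\in N$. The same argument applied to $\Theta({\bf v})$ using (ii) rules out $\Theta({\bf v})\in W^c$.

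Having shown that for every unstable pair and every non-commuting substitution neither $\Theta({\bf u})$ nor $\Theta({\bf v})$ lies in $W^c$, I would invoke Lemma~\ref{lemmaOS} directly to conclude $S(W)\models{\bf u}\approx{\bf v}$.

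The only step requiring genuine care --- though it is still routine --- is the claim $W^c\preceq\{{\bf w}\}$ whenever ${\bf w}\in W^c$, i.e.\ that an arbitrary word is $\preceq$ any of its own subwords; this is the statement that if ${\bf w}$ is a subword of some word in $W$ then every isoterm consequence behaves correctly, and it follows because any identity satisfied by $S(W)$ with one side ${\bf w}$ lifts to an identity with the surrounding word, so ${\bf w}$ an isoterm for $S$ is implied by the larger word being an isoterm; combined with $W\sim W^c$ this gives $W\preceq{\bf w}$. I expect no substantive obstacle beyond keeping the directions of $\preceq$ straight and remembering that $L=L^c$ is exactly what lets us pass from ``$\Theta({\bf u})\in W^c$'' to ``$\Theta({\bf u})\in L$'' so that hypotheses (i) and (ii) apply.
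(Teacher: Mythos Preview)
Your proposal is correct and follows essentially the same approach as the paper: reduce to Lemma~\ref{lemmaOS} by noting that non-commuting $\Theta(x),\Theta(y)$ must contain distinct letters, then derive a contradiction from $\Theta({\bf u})\in W^c\subseteq L$ via hypothesis~(i) and transitivity of $\preceq$. The paper's version is terser, taking the step $\Theta({\bf u})\in W^c\Rightarrow W\preceq\Theta({\bf u})$ for granted, while you spell out the chain $W\sim W^c\preceq\Theta({\bf u})$ explicitly; but the argument is the same.
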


\begin{proof}
Let $\{x, y\}$ be a pair of variables unstable in $\bf u \approx \bf v$ and $\Theta: \mathfrak A \rightarrow \mathfrak A^*$ be a substitution such that
 $\Theta(y) \Theta(x) \ne \Theta(x) \Theta(y)$. Since $\Theta(x) \Theta(y) \ne \Theta(y) \Theta(x)$, without loss of generality, the word
 $\Theta(x)$ contains letter $a$ and the word $\Theta(y)$ contains $b\ne a$.

If  $\Theta({\bf u}) \in W^{\le}$ then by Condition (i) we have that $\Theta({\bf u}) \preceq {\bf n}$  for some ${\bf n} \in  N$.
This contradicts our assumption that $W \not \preceq {\bf n}$ for any ${\bf n} \in N$.
A similar argument shows that $\Theta({\bf v})$ does not belong to $W^{\le}$.
Therefore, Lemma \ref{lemmaOS} implies that $S(W) \models {\bf u} \approx {\bf v}$.
\end{proof}

\section{Syntactic description of the isoterms for certain varieties and hereditary finitely based sets of words}

\begin{fact} \label{xtx} \cite[Fact 3.1]{OS1}
If $xtx$ is an isoterm for a monoid $S$, then

(i) the words $xt_1yxt_2y$ and $xt_1xyt_2y$ can only
form an identity of $S$ with each other;

(ii) the words $xyt_1xt_2y$ and $yxt_1xt_2y$ can only
form an identity of $S$ with each other;

(iii) the words $xt_1yt_2xy$ and $xt_1yt_2yx$ can only
form an identity of $S$ with each other.
\end{fact}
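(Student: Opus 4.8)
The plan is to handle all three parts by a single scheme. Fix a monoid $S$ for which $xtx$ is an isoterm, let $\mathbf{u}$ be either of the two words in one of the three pairs, and suppose $S\models\mathbf{u}\approx\mathbf{v}$ is a nontrivial identity; the goal is to show that $\mathbf{v}$ is the other word of the pair. I will use repeatedly that being an isoterm for $S$ is invariant under bijective renamings of variables, so that $xt_1x$, $yt_1y$, $xt_2x$, $yt_2y$ are all isoterms for $S$; that $S\models\mathbf{u}\approx\mathbf{v}$ implies $S\models\mathbf{u}(\mathfrak{X})\approx\mathbf{v}(\mathfrak{X})$ for every set $\mathfrak{X}$ of variables (substitute $\epsilon$ for the deleted variables); and that if $S\models\mathbf{a}\approx\mathbf{b}$ with $\mathbf{b}$ an isoterm for $S$, then $\mathbf{a}=\mathbf{b}$. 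Two preliminary remarks are needed. First, $S$ is not commutative, for otherwise $S\models xtx\approx x^2t$, contradicting that $xtx$ is an isoterm. Second, $S$ satisfies no nontrivial identity whose two sides use different sets of variables: if a variable $z$ occurred on one side only, then deleting all other variables would give $S\models\epsilon\approx z^k$ with $k\ge 1$, and then $S\models xtx\approx xtxz^k$ would be a nontrivial identity, again contradicting that $xtx$ is an isoterm.

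The first step pins down the letters of $\mathbf{v}$. For each of the six words appearing in (i)--(iii), deleting all variables but $x,t_1$ leaves $xt_1x$ and deleting all variables but $y,t_2$ leaves $yt_2y$; since both of these are isoterms, $\mathbf{v}(x,t_1)=xt_1x$ and $\mathbf{v}(y,t_2)=yt_2y$. Hence $x$ and $y$ occur exactly twice in $\mathbf{v}$, $t_1$ and $t_2$ occur exactly once, and by the second preliminary remark $\mathbf{v}$ has no other variable; so $\mathbf{v}$ is a shuffle of the multiset $\{x,x,y,y,t_1,t_2\}$.

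The second step records where $t_1$ and $t_2$ can sit in $\mathbf{v}$. For each pair $\{z,t_i\}$ with $z$ non-linear, the restriction $\mathbf{u}(z,t_i)$ is either $zt_iz$ or one of $z^2t_i,\ t_iz^2$. In the first case $\mathbf{v}(z,t_i)=zt_iz$, so $t_i$ lies strictly between the two occurrences of $z$ in $\mathbf{v}$. In the second case $\mathbf{v}(z,t_i)\neq zt_iz$, since otherwise $S\models\mathbf{u}(z,t_i)\approx\mathbf{v}(z,t_i)$ would be a nontrivial identity involving the isoterm $zt_iz$; as $z$ occurs twice and $t_i$ once in $\mathbf{v}$, this forces $\mathbf{v}(z,t_i)\in\{z^2t_i,\ t_iz^2\}$, i.e. the two occurrences of $z$ lie on the same side of $t_i$ in $\mathbf{v}$. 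Finally $\mathbf{u}(t_1,t_2)=t_1t_2$, so by non-commutativity $\mathbf{v}(t_1,t_2)=t_1t_2$ and $t_1$ precedes $t_2$ in $\mathbf{v}$. Working out these restrictions for the three pairs (the two words of a pair give literally the same data) one obtains: for $\{xt_1yxt_2y,\ xt_1xyt_2y\}$, that $t_1$ is between the two $x$'s, both $y$'s are on one side of $t_1$, $t_2$ is between the two $y$'s, both $x$'s are on one side of $t_2$, and $t_1$ precedes $t_2$; for $\{xyt_1xt_2y,\ yxt_1xt_2y\}$, that $t_1$ is between the two $x$'s and between the two $y$'s, $t_2$ is between the two $y$'s, both $x$'s are on one side of $t_2$, and $t_1$ precedes $t_2$; and for $\{xt_1yt_2xy,\ xt_1yt_2yx\}$, that $t_1$ and $t_2$ are both between the two $x$'s, both $y$'s are on one side of $t_1$, $t_2$ is between the two $y$'s, and $t_1$ precedes $t_2$.

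The last step is a short finite case analysis on the positions of $t_1$ and $t_2$ in $\mathbf{v}$ --- equivalently, on how the two $x$'s and the two $y$'s fall into the gaps these two positions determine --- using the constraints just listed. In each of the three cases the constraints leave exactly two possibilities for $\mathbf{v}$, namely the two words of the pair; since the identity is nontrivial, $\mathbf{v}\neq\mathbf{u}$, and hence $\mathbf{v}$ is the other word of the pair, as required. I expect the only genuine difficulty to be combinatorial care: one must correctly decide, for each of the six words, which variable-pair restrictions are renamings of $xtx$ (a single miscount alters the constraint list) and then run the positional enumeration without omitting a case --- the third pair is the most delicate, since the placements of the two $y$'s split a priori into several subcases that must each be eliminated. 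Note also that the argument never presupposes $\mathbf{u}\approx\mathbf{v}$ to be balanced; the occurrence counts used above are instead forced, one variable at a time, by the hypothesis that $xtx$ is an isoterm.
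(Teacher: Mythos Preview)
The paper does not give its own proof of this Fact; it is simply quoted from \cite[Fact~3.1]{OS1}. Your argument is correct and self-contained: the two preliminary remarks are valid (from $S\models\epsilon\approx z^k$ one indeed gets $S\models xtx\approx xtxz^k$, contradicting the isoterm hypothesis), the first step correctly forces $\mathbf{v}$ to be a rearrangement of the multiset $\{x,x,y,y,t_1,t_2\}$, and the second step correctly extracts, for each pair $\{z,t_i\}$, whether $t_i$ must lie between the two occurrences of $z$ in $\mathbf{v}$ or whether both occurrences of $z$ must lie on the same side of $t_i$. The final enumeration you sketch does go through: in each of the three cases the constraints pin down the positions of the $x$'s and $y$'s relative to $t_1$ and $t_2$ up to a single binary choice (the order of the two letters in the unique ``free'' gap), yielding precisely the two words of the pair.

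This is essentially the standard way such statements are proved (restrict to two-variable subwords and use the isoterm hypothesis), so there is nothing to contrast methodologically; your write-up would serve perfectly well as the omitted proof.
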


We reserve letter $t$ with or without subscripts to denote linear variables. If we use letter $t$ several times in a word,  we assume that different occurrences of $t$ represent distinct linear variables; so $xtxytyt$ abbreviates $xt_1xyt_2yt_3$ for example. Fact \ref{xtx} immediately implies the following.

\begin{fact} \label{xy3} \cite[Fact 3.2]{OS1} $xtxyty \sim xtyxty$,  $xytxty \sim yxtxty$ and  $xtytxy \sim xtytyx$.

\end{fact}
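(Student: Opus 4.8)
The plan is to prove the three $\sim$-statements in parallel, in each case establishing the two $\preceq$-inequalities separately. Fix one of the three pairs of words and call them ${\bf w}_1$ and ${\bf w}_2$, so that $\{{\bf w}_1,{\bf w}_2\}$ is exactly the pair appearing in clause (i), (ii) or (iii) of Fact~\ref{xtx}. To prove ${\bf w}_1\preceq{\bf w}_2$ I take an arbitrary monoid $S$ for which ${\bf w}_1$ is an isoterm and show that ${\bf w}_2$ is an isoterm for $S$ as well; the opposite inequality follows by the same argument with ${\bf w}_1$ and ${\bf w}_2$ interchanged, and together these give ${\bf w}_1\sim{\bf w}_2$.

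The first and only substantial step is the following: if $\bf w$ is any one of the six words $xtxyty$, $xtyxty$, $xytxty$, $yxtxty$, $xtytxy$, $xtytyx$ and $\bf w$ is an isoterm for a monoid $S$, then $xtx$ is an isoterm for $S$. To see this, note that each of these words can be written as ${\bf w}=P\,\Theta(xtx)\,Q$ for a (possibly empty) prefix $P$, a (possibly empty) suffix $Q$, and a substitution $\Theta$ with $\Theta(x)=x$ and $\Theta(t)$ a nonempty word in linear variables (chosen fresh, so that they do not occur in the words considered below) that neither begins nor ends with $x$. For example, $xtyxty=\Theta(xtx)\cdot t_2y$ with $\Theta(x)=x$, $\Theta(t)=t_1y$, and $xtytyx=\Theta(xtx)$ with $\Theta(x)=x$, $\Theta(t)=t_1yt_2y$. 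Now if $xtx$ were not an isoterm for $S$, there would be a word $C\neq xtx$ with $S\models xtx\approx C$; applying $\Theta$ and multiplying on the left by $P$ and on the right by $Q$ yields $S\models{\bf w}\approx P\,\Theta(C)\,Q$. In the free monoid $P\,\Theta(C)\,Q={\bf w}=P\,\Theta(xtx)\,Q$ holds if and only if $\Theta(C)=\Theta(xtx)=x\,\Theta(t)\,x$; since $\Theta(t)$ begins and ends with a variable other than $x$ and involves only variables not occurring in $C$, a short length-and-endpoints argument shows that this forces $C=xtx$. Hence ${\bf w}\approx P\,\Theta(C)\,Q$ is a nontrivial identity of $S$ with $\bf w$ on one side, contradicting that $\bf w$ is an isoterm for $S$.

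Granting this, the conclusion is immediate from Fact~\ref{xtx}. Suppose ${\bf w}_1$ is an isoterm for $S$. By the previous step $xtx$ is an isoterm for $S$, so the relevant clause of Fact~\ref{xtx} applies and ${\bf w}_1,{\bf w}_2$ can only form an identity of $S$ with each other. Thus any identity $S\models{\bf w}_2\approx C$ has $C\in\{{\bf w}_1,{\bf w}_2\}$; moreover $C\neq{\bf w}_1$, since otherwise $S\models{\bf w}_1\approx{\bf w}_2$ would be a nontrivial identity having the isoterm ${\bf w}_1$ on one side. Hence $C={\bf w}_2$, i.e.\ ${\bf w}_2$ is an isoterm for $S$. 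This proves ${\bf w}_1\preceq{\bf w}_2$; the symmetric argument gives ${\bf w}_2\preceq{\bf w}_1$, and so ${\bf w}_1\sim{\bf w}_2$. Applying this with clauses (i), (ii), (iii) of Fact~\ref{xtx} yields $xtxyty\sim xtyxty$, $xytxty\sim yxtxty$ and $xtytxy\sim xtytyx$ respectively.

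The only real obstacle is the first step; but it reduces to writing down the six decompositions ${\bf w}=P\,\Theta(xtx)\,Q$, which are routine, together with the elementary check that $\Theta(C)=x\,\Theta(t)\,x$ forces $C=xtx$ whenever $\Theta(t)$ is $x$-free at both ends. Everything after that is a direct invocation of Fact~\ref{xtx} plus the basic fact that an isoterm cannot appear on one side of a nontrivial identity.
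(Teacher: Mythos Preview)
Your approach is the same as the paper's: the paper simply records that Fact~\ref{xy3} follows immediately from Fact~\ref{xtx}. You correctly isolate the one point to check---that whenever one of the six words is an isoterm for a monoid $S$, so is $xtx$---and your use of Fact~\ref{xtx} thereafter is clean.

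There is, however, a small gap in that first step. You describe $\Theta(t)$ as ``a nonempty word in linear variables (chosen fresh \dots)'', but in four of the six decompositions $\Theta(t)$ necessarily contains the non-linear variable $y$; your own example $\Theta(t)=t_1y$ for ${\bf w}=xt_1yxt_2y$ already shows this, and $y$ cannot be chosen fresh. Hence the assertion that $\Theta(t)$ ``involves only variables not occurring in $C$'' is unjustified: nothing prevents $C$ from containing $y$ or $t_1$, and then $\Theta$ need not separate $C$ from $xtx$---for instance, with $\Theta(t)=t_1y$ one has $\Theta(xt_1yx)=xt_1yx=\Theta(xtx)$ although $xt_1yx\neq xtx$. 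The fix is short: since $S$ is a monoid, first substitute each variable of $C$ outside $\{x,t\}$ by a fresh variable disjoint from $\{x,t\}\cup\con(\Theta(t))$; this preserves both $S\models xtx\approx C$ and $C\neq xtx$, and now $\Theta(C)=x\Theta(t)x$ forces $\con(C)\subseteq\{x,t\}$ (any remaining extra letter of $C$ would appear verbatim in $\Theta(C)$), whereupon your length-and-endpoints argument goes through because $\Theta(t)$ is nonempty and $x$-free.
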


The  identities $xt_1xyt_2y \approx xt_1yxt_2y$,  $xyt_1xt_2y \approx yxt_1xt_2y$ and  $xt_1yt_2xy \approx xt_1yt_2yx$ we denote respectively by $\sigma_{\mu}$,  $\sigma_1$ and $\sigma_{2}$. Notice that the identities $\sigma_1$ and $\sigma_{2}$ are dual to each other.

If some variable $x$ occurs $n \ge 0$ times in a word ${\bf u}$ then we write $occ_{\bf u}(x)=n$. The set $\con({\bf u}) = \{x \in \mathfrak A \mid occ_{\bf u}(x)>0 \}$ of all variables contained in a word ${\bf u}$ is called the {\em content of ${\bf u}$}.
We use $_{i{\bf u}}x$ to refer to the $i^{th}$ from the left occurrence of variable $x$ in a word ${\bf u}$. We use $_{last{\bf u}}x$ to refer to the last occurrence of $x$ in ${\bf u}$.  The set $\ocs({\bf u}) = \{ {_{i{\bf u}}x} \mid x \in \mathfrak A, 1 \le i \le occ_{\bf u} (x) \}$ of all occurrences of all variables in ${\bf u}$ is called the {\em occurrence set of ${\bf u}$}.
As in \cite{OS2}, with each subset $\Sigma$ of $\{\sigma_1, \sigma_{\mu}, \sigma_2\}$ we associate an assignment of two Types to all pairs of occurrences of distinct non-linear variables in all words as follows. We say that each pair of occurrences of two distinct non-linear variables in each word is $\{\sigma_1, \sigma_{\mu}, \sigma_2\}$-{\em good}. If $\Sigma$ is a proper subset of $\{\sigma_1, \sigma_{\mu}, \sigma_2\}$, then we say that a pair of occurrences of distinct non-linear variables is $\Sigma$-{\em good} if it is  not declared to be $\Sigma$-{\em bad} in the following definition.

\begin{definition} \cite{OS2} \label{goodfact} If $\{c, d\} \subseteq \ocs({\bf u})$ is a pair of occurrences of two distinct non-linear variables $x \ne y$ in a word ${\bf u}$  then

(i)  pair $\{c, d \}$ is  $\{\sigma_{\mu} , \sigma_{2}\}$-bad if $\{c,d\} = \{{_{1{\bf u}}x}, {_{1{\bf u}}y} \}$;

(ii)   pair $\{c, d \}$  is $\{\sigma_1 , \sigma_{\mu}\}$-bad if  $\{c,d\} = \{{_{last{\bf u}}x}, {_{last{\bf u}}y} \}$;

(iii)  pair $\{c, d \}$  is $\{\sigma_1 , \sigma_{2}\}$-bad if   $\{c,d\} = \{{_{1{\bf u}}x}, {_{last{\bf u}}y} \}$.

(iv)  pair $\{c, d \}$  is $\sigma_{\mu}$-bad if $\{c,d\} = \{{_{1{\bf u}}x}, {_{1{\bf u}}y} \}$ or  $\{c,d\} = \{{_{last{\bf u}}x}, {_{last{\bf u}}y} \}$;

(v)  pair $\{c, d \}$  is $\sigma_{2}$-bad  if $c = {_{1{\bf u}}x}$ or $d = {_{1{\bf u}}y}$;

(vi)  pair $\{c, d \}$  is $\sigma_{1}$-bad if  $c = {_{last{\bf u}}x}$ or $d = {_{last{\bf u}}y}$.

\end{definition}

We denote the set of all left sides of identities from $\Sigma$ by $L_{\Sigma}$ and
the set of all right sides of identities from $\Sigma$ by $R_{\Sigma}$. We use $\Sigma^\delta$ to denote the closure of $\Sigma$ under deleting variables.

\begin{lemma} \label{firstsim1} If $S$ is a monoid such that $xtx$ is an isoterm for $S$ and $\Sigma \subseteq \{\sigma_1, \sigma_{\mu}, \sigma_2\}$  then the
following conditions are equivalent:

(i) $S \models \Sigma$;

(ii) if a word $\bf u$ is an isoterm for $S$ then each adjacent pair of occurrences of two distinct non-linear variables in $\bf u$ is $\Sigma$-bad;

(iii) no word in $L_{\Sigma}$ is an isoterm for $S$;

(iv) no word in $R_{\Sigma}$ is an isoterm for $S$.

\end{lemma}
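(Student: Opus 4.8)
The plan is to prove the cycle of implications (i) $\Rightarrow$ (ii) $\Rightarrow$ (iii) $\Rightarrow$ (i), together with the equivalence (iii) $\Leftrightarrow$ (iv). Since $\Sigma$ is one of the eight subsets of $\{\sigma_1,\sigma_\mu,\sigma_2\}$, and $\Sigma$-badness is defined case-by-case in Definition~\ref{goodfact}, the argument will largely be a uniform one that is then checked against each of the (non-trivial) subsets. The single-identity cases $\Sigma=\{\sigma_1\}$, $\{\sigma_\mu\}$, $\{\sigma_2\}$ and the empty case are immediate from Fact~\ref{xy3} and the definitions, so the work is in the two-element subsets; by the duality between $\sigma_1$ and $\sigma_2$ noted after Fact~\ref{xy3} it suffices to treat $\{\sigma_\mu,\sigma_2\}$ and $\{\sigma_1,\sigma_2\}$.

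For (i) $\Rightarrow$ (ii): assume $S\models\Sigma$ and let $\bf u$ be an isoterm for $S$. Suppose some adjacent pair $\{c,d\}$ of occurrences of distinct non-linear variables $x\ne y$ in $\bf u$ is $\Sigma$-good. I would show that then one can apply an identity from $\Sigma^\delta$ (a deletion-instance of some $\sigma\in\Sigma$) to $\bf u$ by substituting all variables other than $x,y$ and the linear variables appropriately: the point of Definition~\ref{goodfact} is exactly that a $\Sigma$-good adjacent pair is \emph{not} first-first (if $\sigma_\mu$ or $\sigma_2$ is excluded), \emph{not} last-last (if $\sigma_\mu$ or $\sigma_1$ is excluded), etc., so the relevant pattern $xt_1xyt_2y$, $xyt_1xt_2y$ or $xt_1yt_2xy$ (or one of their deletions) embeds into $\bf u$ with $c,d$ as the marked adjacent occurrences. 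Since $xtx$ is an isoterm, Fact~\ref{xtx} guarantees that the resulting identity is nontrivial, contradicting that $\bf u$ is an isoterm. This step is the main obstacle: one must verify that in each of the two-element cases the combinatorial position forbidden by $\Sigma$-goodness really does let one read off a substitution instance of the corresponding $\sigma$, and that deletions cover the degenerate situations (e.g. when $x$ or $y$ has only the occurrences needed).

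For (ii) $\Rightarrow$ (iii): each word $\bf w$ in $L_\Sigma$ is one of $xt_1xyt_2y$, $xyt_1xt_2y$, $xt_1yt_2xy$, and in each such word there is an adjacent pair of occurrences of $x,y$ that is $\Sigma$-good (indeed, the pair $\{{}_{2\bf w}x,{}_{1\bf w}y\}$ for $\sigma_\mu$, $\{{}_{1\bf w}y,{}_{2\bf w}x\}$ for $\sigma_1$, $\{{}_{2\bf w}y,{}_{2\bf w}x\}$ for $\sigma_2$ — check against Definition~\ref{goodfact}); hence by (ii) $\bf w$ cannot be an isoterm for $S$. For (iii) $\Rightarrow$ (i): if no word in $L_\Sigma$ is an isoterm for $S$, then for each $\sigma\in\Sigma$ the left side of $\sigma$ is not an isoterm, so $S$ satisfies some nontrivial identity $\sigma'$ with that left side; by Fact~\ref{xtx} (which applies since $xtx$ is an isoterm) the only possible partner is the right side of $\sigma$, so $S\models\sigma$, whence $S\models\Sigma$. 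Finally (iii) $\Leftrightarrow$ (iv): this is again Fact~\ref{xtx} — the left side of $\sigma$ is an isoterm iff the right side is, because each can only form an identity with the other, so a nontrivial identity on one forces one on the other. Assembling the cycle gives all four equivalences.
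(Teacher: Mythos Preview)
Your proof is correct and follows essentially the same approach as the paper: the paper proves the cycle (i)$\to$(ii)$\to$(iii)$\to$(iv)$\to$(i), invoking the applicability of some identity in $\Sigma^\delta$ for (i)$\to$(ii), the fact that the unique adjacent pair in each word of $L_\Sigma$ is $\Sigma$-good for (ii)$\to$(iii), Fact~\ref{xy3} for (iii)$\to$(iv), and Fact~\ref{xtx} for (iv)$\to$(i); your cycle (i)$\to$(ii)$\to$(iii)$\to$(i) together with (iii)$\Leftrightarrow$(iv) via Fact~\ref{xtx} is the same argument reorganized. One small slip: in your (ii)$\Rightarrow$(iii) check, the adjacent pair in the left side $xyt_1xt_2y$ of $\sigma_1$ is $\{{}_{1\mathbf w}x,{}_{1\mathbf w}y\}$, not $\{{}_{1\mathbf w}y,{}_{2\mathbf w}x\}$ (the latter are separated by $t_1$); this pair is indeed $\{\sigma_1\}$-good since neither occurrence is last.
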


\begin{proof} (i) $\rightarrow$ (ii) Suppose that $\bf u$ contains a $\Sigma$-good adjacent pair $\{c,d \} \subseteq \ocs({\bf u})$ of occurrences of two distinct non-linear variables. Then one of the identities in $\Sigma^\delta$ is applicable to $\bf u$. Therefore,  $S \models {\bf u} \approx {\bf v}$ such that the word $\bf v$ is obtained from $\bf u$ by swapping $c$ and $d$. This contradicts the fact that $\bf u$ is an isoterm for $S$. So, we must assume that every adjacent pair of occurrences of two distinct non-linear variables in $\bf u$ is $\Sigma$-bad.

(ii) $\rightarrow$ (iii) follows from the fact that the only adjacent pair of occurrences of two distinct non-linear variables in each word in $L_\Sigma$ is $\Sigma$-good.

 (iii) $\rightarrow$ (iv) follows immediately from Fact \ref{xy3}.

 (iv) $\rightarrow$ (i) follows immediately from Fact \ref{xtx}.
\end{proof}

 Together with Definition \ref{goodfact}, the following statement gives us explicit syntactic descriptions of the monoids of the form $S(W)$ contained in the seven varieties defined by non-empty subsets of $\{\sigma_1, \sigma_{\mu}, \sigma_2\}$.

\begin{theorem} \label{firstsim} If $W$ is a set of words and $\Sigma \subseteq \{\sigma_1, \sigma_{\mu}, \sigma_2\}$  then the
following conditions are equivalent:

(i) $S(W) \models \Sigma$;

(ii) every adjacent pair of occurrences of two distinct non-linear variables in each word in $W$ is $\Sigma$-bad;

(iii) for each ${\bf u} \in L_{\Sigma}$ we have $W \not \preceq {\bf u}$;

(iv) for each ${\bf u} \in R_{\Sigma}$ we have $W \not \preceq {\bf u}$.

\end{theorem}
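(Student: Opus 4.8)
The plan is to deduce the four-way equivalence from Lemma \ref{firstsim1}, applied to the monoid $S(W)$ itself, using Proposition \ref{pr1} as the dictionary between ``$\mathbf{u}$ is an isoterm for $S(W)$'' and ``$W \preceq \mathbf{u}$''. Because Lemma \ref{firstsim1} needs the hypothesis that $xtx$ is an isoterm for $S$, the natural first move is to split into the two cases according to whether $xtx$ is an isoterm for $S(W)$.

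Suppose first that $xtx$ \emph{is} an isoterm for $S(W)$, so Lemma \ref{firstsim1} applies with $S=S(W)$. Condition (i) of the theorem is verbatim condition (i) of the lemma. By Proposition \ref{pr1}, ``no word in $L_\Sigma$ (resp. $R_\Sigma$) is an isoterm for $S(W)$'' is exactly ``$W\not\preceq\mathbf{u}$ for all $\mathbf{u}\in L_\Sigma$ (resp. $R_\Sigma$)'', so conditions (iii) and (iv) of the theorem are conditions (iii) and (iv) of the lemma; their equivalence can alternatively be read off Fact \ref{xy3}, which gives $\mathbf{u}\sim\mathbf{u}'$ for the left and right sides of each $\sigma\in\Sigma$. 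Since $\preceq$ is reflexive, every word of $W$ is an isoterm for $S(W)$, so condition (ii) of the lemma specialises to condition (ii) of the theorem; this yields $(\mathrm{i})\Rightarrow(\mathrm{ii})$. To close the cycle it remains to prove, \emph{in this case}, that $(\mathrm{ii})\Rightarrow(\mathrm{iii})$, i.e. that if some $\mathbf{u}\in L_\Sigma$ is an isoterm for $S(W)$ then some word of $W$ carries a $\Sigma$-good adjacent pair of occurrences of distinct non-linear variables. I would prove this as a monotonicity statement: \emph{if $W\preceq\mathbf{u}$ and $\mathbf{u}$ has a $\Sigma$-good adjacent pair of distinct non-linear variables, then so does some word of $W$.} The point is that, by Definition \ref{goodfact}, $\Sigma$-badness of a pair depends only on which of its two occurrences are first or last occurrences of their variables, and this data is stable when one deletes variables or passes to a factor; one then traces the offending $\Sigma$-good adjacent pair of $\mathbf{u}$ back through the operations witnessing $W\preceq\mathbf{u}$. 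This monotonicity step is where I expect the main obstacle to lie.

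Now suppose $xtx$ is \emph{not} an isoterm for $S(W)$. The key preliminary fact is that then every word of $W$ has at most one non-linear variable. For this I would first prove the auxiliary claim that \emph{if a word $\mathbf{w}$ has at least two distinct non-linear variables, then $xtx$ is an isoterm for $S(\{\mathbf{w}\})$}; this is a short direct computation (if some variable of $\mathbf{w}$ is repeated with a non-empty gap, argue with that factor; otherwise each variable forms a block, and the first non-linear block of $\mathbf{w}$ separates $xtx$ from $xxt$ while the last one separates $xtx$ from $txx$, these being the only candidate right-hand sides for a nontrivial identity with $xtx$). Given the claim: if some $\mathbf{w}\in W$ had two distinct non-linear variables, then $\{\mathbf{w}\}\preceq xtx$, and since $W\preceq\{\mathbf{w}\}$ transitivity gives $W\preceq xtx$, i.e. $xtx$ an isoterm for $S(W)$ by Proposition \ref{pr1}, a contradiction.

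With that, all four conditions hold in this case, hence are trivially equivalent. Condition (ii) holds vacuously, since no word of $W$ even has two distinct non-linear variables. For (iii) and (iv): each $\mathbf{u}\in L_\Sigma\cup R_\Sigma$ contains, as a factor, a word $\Theta(xtx)$ for an injective substitution $\Theta$ (for instance $xt_1x$ inside $xt_1xyt_2y$), so $\mathbf{u}\preceq\Theta(xtx)\preceq xtx$; since $xtx$ is not an isoterm for $S(W)$, neither is $\mathbf{u}$, i.e. $W\not\preceq\mathbf{u}$ by Proposition \ref{pr1}. For (i): fix $\sigma\in\Sigma$, written $\mathbf{u}\approx\mathbf{v}$; a direct inspection shows the only unstable pair of variables in $\sigma$ is $\{x,y\}$, and for any substitution $\Theta$ with $\Theta(x)\Theta(y)\neq\Theta(y)\Theta(x)$ both $\Theta(\mathbf{u})$ and $\Theta(\mathbf{v})$ contain two distinct non-linear variables (a letter occurring in $\Theta(x)$ and a distinct letter occurring in $\Theta(y)$, each appearing at least twice because $x$ and $y$ each occur twice in $\mathbf{u}$ and in $\mathbf{v}$), so they lie outside $W^c$; Lemma \ref{lemmaOS} then gives $S(W)\models\sigma$, completing the case and the proof.
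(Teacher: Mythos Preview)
Your case split on whether $xtx$ is an isoterm for $S(W)$ and your treatment of Case~2 are both fine and match the paper's approach closely; in fact your Case~2 is more detailed than the paper's (the paper only asserts that if $W\not\preceq xtx$ then no two distinct non-linear variables are \emph{adjacent} in any word of $W$, which suffices, while you prove the stronger statement that no word of $W$ has two distinct non-linear variables at all).

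The genuine gap is your monotonicity step for $(\mathrm{ii})\Rightarrow(\mathrm{iii})$ in Case~1. You write that one should ``trace the offending $\Sigma$-good adjacent pair of $\mathbf{u}$ back through the operations witnessing $W\preceq\mathbf{u}$.'' But $\preceq$ is not defined by syntactic operations such as taking factors or deleting variables; it is a purely semantic relation (Proposition~\ref{pr1}: $W\preceq\mathbf{u}$ iff $\mathbf{u}$ is an isoterm for $S(W)$). There are no ``operations witnessing $W\preceq\mathbf{u}$'' to trace back through, so the argument as written does not go through. The observation that first/last status is preserved under deletions and factors is correct but irrelevant, since $\preceq$ is not generated by those moves.

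The fix is exactly the tool you already use in Case~2: prove $(\mathrm{ii})\Rightarrow(\mathrm{i})$ directly via Lemma~\ref{lemmaOS}. For each $\sigma\in\Sigma$ with unstable pair $\{x,y\}$, take $\Theta$ with $\Theta(x)\Theta(y)\neq\Theta(y)\Theta(x)$ and suppose $\Theta$ of one side lies in $W^c$, hence is a factor of some $\mathbf{w}\in W$. Pick letters $a$ in $\Theta(x)$ and $b\neq a$ in $\Theta(y)$; the adjacent pair of occurrences of $a$ and $b$ created at the $xy$ (or $yx$) juncture inside $\mathbf{w}$ is $\Sigma$-good in $\mathbf{w}$ because the \emph{other} copies of $\Theta(x)$ and $\Theta(y)$ supply further occurrences of $a$ and $b$ on the appropriate side, so the pair cannot have the first/last pattern that Definition~\ref{goodfact} requires for $\Sigma$-badness. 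This contradicts~(ii), so the hypothesis of Lemma~\ref{lemmaOS} holds and $S(W)\models\sigma$. Then $(\mathrm{i})\Rightarrow(\mathrm{iii})$ is immediate. This is precisely the route the paper takes (``(ii)$\rightarrow$(i) Using Lemma~\ref{lemmaOS} one can easily show that $S(W)\models\Sigma$'').
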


\begin{proof} (i) $\rightarrow$ (ii) If $W \not \preceq xtx$ then no two distinct non-linear variables are adjacent in any word in $W$.
If $W \preceq xtx$ then we apply Lemma \ref{firstsim1}.

(ii) $\rightarrow$ (i) Using Lemma \ref{lemmaOS} one can easily show that $S(W) \models \Sigma$.

(i) $\rightarrow$ (iii) Evident.

(iii) $\rightarrow$ (iv)  follows immediately from Fact \ref{xy3}.

(iv) $\rightarrow$ (i) If $W \not \preceq xtx$ then no two distinct non-linear variables are adjacent in any word in $W$.
Consequently, $S(W) \models \{\sigma_1, \sigma_{\mu}, \sigma_2\}$ by Lemma \ref{lemmaOS}.
If $W \preceq xtx$ then follows immediately from Fact \ref{xtx}.
\end{proof}

\begin{lemma} \cite[Theorem 1.1]{EL1} \label{fb4} Every monoid that satisfies the identities $\sigma_1$ and  $\sigma_{\mu}$ is finitely based.
\end{lemma}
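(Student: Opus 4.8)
The statement is a consequence of --- and I would prove the slightly stronger fact that --- the monoid variety $\mathbf V$ defined by $\sigma_1$ and $\sigma_\mu$ is \emph{hereditarily finitely based}, meaning that every subvariety of $\mathbf V$ is finitely based; in particular $\var M$ is then finitely based for each monoid $M\in\mathbf V$. The plan follows the standard three-step route for such results: (i) find a normal form for words modulo the fully invariant congruence $\approx_{\mathbf V}$; (ii) determine the subvarieties of $\mathbf V$; (iii) exhibit a finite basis for each.

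For step (i): both $\sigma_1$ and $\sigma_\mu$ are balanced, so every commutative monoid lies in $\mathbf V$; hence $\mathbf V$ satisfies only balanced identities, and $\mathbf V$-equivalent words share content and every occurrence-count. To describe $\approx_{\mathbf V}$ I would analyse the elementary moves produced by $\sigma_1$, $\sigma_\mu$ and their consequences: deleting linear variables already gives $xxyy\approx xyxy$, $xxyty\approx xyxty$ and $xyxy\approx yxxy$, and, under arbitrary substitutions for all variables, these amount to transpositions of two adjacent occurrences of distinct variables; whenever $xtx$ is an isoterm --- the generic case, cf.\ Corollary~\ref{unik} and Definition~\ref{goodfact}(ii) --- the adjacent pairs left unmovable are exactly those of the form $\{{}_{\mathrm{last}\,\mathbf{u}}x,{}_{\mathrm{last}\,\mathbf{u}}y\}$, while the degenerate subvarieties in which $xtx$ collapses sit just above the commutative ones and are handled separately. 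I would then prove that $\approx_{\mathbf V}$ is generated by these transpositions and that the induced rewriting is terminating and confluent, yielding a canonical word in which every movable occurrence has been sorted; such a word is then described by the occurrence-counts together with a bounded amount of order-data recording the arrangement of the first and last occurrences.

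For steps (ii) and (iii): using the normal form I would show that every subvariety of $\mathbf V$ is cut out, relative to $\mathbf V$, by the identities it imposes on the commutative quotient --- the subvarieties of the variety of all commutative monoids are all finitely based, a classical fact of Perkins --- together with finitely many identities governing the bounded order-data, the finiteness coming from a Higman-type well-quasi-order on the relevant ``skeletons'' of canonical words. Assembling an actual finite basis then requires checking that the joins which occur inside $\mathbf V$ preserve finite axiomatisability; here the point is that the balanced, commutative part interacts with the combinatorial part in a controlled way. A variant of (ii)--(iii) that avoids a complete subvariety classification is to verify directly that $\mathbf V$ contains no non-finitely based limit variety: by the normal-form description, any such variety would have to lie inside the commutative part or inside one of a few small aperiodic varieties, each of which is finitely based.

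The principal obstacle is step (i): identifying precisely which adjacent transpositions $\sigma_1$ and $\sigma_\mu$ license --- the side-conditions are delicate, since an occurrence can be frozen by being simultaneously a first occurrence of one variable and abutting a last occurrence of another --- and then proving that the induced rewriting system is terminating and confluent with the stated canonical form. A secondary difficulty is reconciling the non-locally-finite commutative part of $\mathbf V$ with the rest of the variety when assembling a finite basis for an arbitrary subvariety.
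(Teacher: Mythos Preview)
The paper does not prove this lemma at all: it is quoted verbatim as Theorem~1.1 of Lee~\cite{EL1}, with no argument supplied. So there is no ``paper's own proof'' to compare your proposal against. A couple of remarks are nevertheless in order.

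First, the ``slightly stronger fact'' you propose to prove --- that the variety $\mathbf V = \var\{\sigma_1,\sigma_\mu\}$ is hereditarily finitely based --- is not stronger than Lee's theorem; it \emph{is} Lee's theorem. The title of \cite{EL1} is ``Maximal Specht varieties of monoids'', and a Specht variety is precisely one all of whose subvarieties are finitely based. The paper here states only the consequence it needs (each monoid in $\mathbf V$ is FB), but Lee proves the full hereditary statement, and indeed the paper uses that hereditary version in Corollary~\ref{11words}.

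Second, your three-step outline (normal form for $\approx_{\mathbf V}$, description of the subvariety lattice, finite basis for each) is the standard architecture for such results and is in the spirit of Lee's argument. Your identification of the frozen adjacent pairs as those of the form $\{{}_{\mathrm{last}\,\mathbf u}x,{}_{\mathrm{last}\,\mathbf u}y\}$ is exactly the $\{\sigma_1,\sigma_\mu\}$-bad pairs of Definition~\ref{goodfact}(ii), consistent with Corollary~\ref{unik}. The genuine work, as you yourself flag, lies in step~(i): pinning down the canonical form and proving confluence is delicate, and Lee's paper devotes considerable effort to it. Your sketch is a plausible plan rather than a proof, and the ``Higman-type well-quasi-order'' you invoke in step~(iii) would need to be made concrete; in Lee's treatment the subvariety lattice is analysed explicitly enough that no abstract wqo argument is required.

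In short: your proposal is a reasonable roadmap toward re-proving Lee's result, but since the present paper simply imports that result by citation, there is nothing here against which to check it beyond noting that the strategy is sound and the target is correctly identified.
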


 A monoid $S$ is said to be {\em hereditarily finitely based} if every monoid subvariety of $\var S$ is finitely based.
We say that a set of words $W$ is {\em hereditarily finitely based} if every set of words $W'$ with the property $W \preceq W'$ is finitely based.

\begin{cor} \label{11words} For a set of words $W$ the following conditions are equivalent:

(I) the monoid $S(W)$ is hereditarily finitely based;

(II) $W$ is hereditarily finitely based;

(III) $W \not \preceq xtxyty$ and either $W \not \preceq xytxty$ or $W \not \preceq xtytxy$;

(IV) $W$ satisfies one of the following dual conditions:

(i) every adjacent pair of occurrences of two non-linear variables $x \ne y$ in each word ${\bf u} \in W$ is of the form $\{{_{1{\bf u}}x}, {_{1{\bf u}}y} \}$;

(ii) every adjacent pair of occurrences of two non-linear variables $x \ne y$ in each word ${\bf u} \in W$ is of the form $\{{_{last{\bf u}}x}, {_{last{\bf u}}y} \}$.

(V) $S(W)$ is contained either in $\var \{\sigma_1, \sigma_\mu\}$ or in $\var \{\sigma_2, \sigma_\mu\}$.

\end{cor}

\begin{proof} (I) $\rightarrow$ (II) If $S(W)$ is hereditary finitely based then the set of words $W$ is hereditary finitely based by Proposition \ref{pr1}.

(II) $\rightarrow$ (III) Suppose that $W \preceq xtxyty$ or $W \preceq \{xytxty, xtytxy\}$. By the result of Jackson from \cite{MJ}, both $S(\{xtxyty\})$ and  $S(\{xytxty, xtytxy\})$ are NFB.
This contradicts the fact that $W$  hereditarily finitely based.

(III) $\rightarrow$ (IV) Follows from Theorem \ref{firstsim} for $\Sigma = \{\sigma_1, \sigma_{\mu}\}$ and $\Sigma = \{\sigma_2, \sigma_{\mu}\}$.

(IV) $\rightarrow$ (V) Follows from Theorem \ref{firstsim} for $\Sigma = \{\sigma_1, \sigma_{\mu}\}$ and $\Sigma = \{\sigma_2, \sigma_{\mu}\}$.

(V) $\rightarrow$ (I) Follows from the result of Lee \cite{EL1} (see Lemma \ref{fb4} above).
\end{proof}

A {\em block} of a word ${\bf u}$ is a maximal subword of ${\bf u}$ that does not contain any linear variables of ${\bf u}$.
For $n \ge 0$, a word $\bf u$ is called {\em block-$n$-simple} if each block of $\bf u$ involves at most $n$ distinct variables. For example, the word $aabbat_1bcbct_2cca$ is block-2-simple. Evidently, a word $\bf u$ is block-$0$-simple if and only if ${\bf u} = t_1t_2 \dots t_k$ for some $k \ge 0$.
A word that contains at most one non-linear variable is called {\em almost-linear}. We use $\non ({\bf u})$ to denote the set of all non-linear variables in a word $\bf u$.

\begin{cor} \label{blocksim} For a set of words $W$ the following conditions are equivalent:

(i) each word in $W$ is block-1-simple;

(ii) $W$ is equationally equivalent to a set of almost-linear words;

(iii) $S(W)$ is contained in $\var \{\sigma_1, \sigma_\mu, \sigma_2 \}$.

\end{cor}

\begin{proof} (i) $\rightarrow$ (ii) For each ${\bf w} \in W$ and $x \in \non ({\bf w})$ let ${\bf w}_x$ denote the word obtained from $\bf w$ by erasing all occurrences
of all non-linear variables in $\bf w$ except for $x$. Then ${\bf w} \sim \{ {\bf w}_x \mid x \in \non ({\bf w})\}$.
(This easily follows from Lemma 4.1 in \cite{JS} which says that if a word $\bf v$ is obtained by erasing a prefix (suffix) of a block in a word $\bf u$ then  $\{{\bf u}, xy\} \preceq {\bf v}$.)

(ii) $\rightarrow$ (iii) Follows from Theorem \ref{firstsim} for $\Sigma = \{\sigma_1, \sigma_{\mu}, \sigma_2\}$.

(iii) $\rightarrow$ (i) Follows from Theorem \ref{firstsim} for $\Sigma = \{\sigma_1, \sigma_{\mu}, \sigma_2\}$.
\end{proof}

Corollaries \ref{11words} and \ref{blocksim} imply that every set of block-1-simple words is hereditarily finitely based. This generalizes Theorem 3.2 in \cite{OS} which says that every set of almost-linear words is finitely based.

\section{Seven sufficient conditions under which a set of block-2-simple words is NFB}

\begin{table}[tbh]
\begin{center}
\small
\begin{tabular}{|l|l|l|}
\hline set $I$ &identity  ${\bf U}_n \approx {\bf V}_n$ for $n>3$ & set $N$\\
\hline
\protect\rule{0pt}{10pt}$xytxy$ & $[XYn]t[Yn][Xn]
\approx [Yn][Xn]t[XYn]$  &  $xytyx$\\
\hline

\protect  \rule{0pt}{10pt}$xytyx$ & $y[Xn]ty[nX] \approx [Xn]yt[nX]y$ &$xytxy$ \\
\hline
\protect\rule{0pt}{10pt}$xytxty$, $xtytxy$&$[X(n^2)]t[X(n^2)\pi] \approx [X(n^2)\pi]t[X(n^2)]$& $xytxy$, $xytyx$\\
\hline

\protect\rule{0pt}{10pt}$xtxyty$& \parbox{10pt}{$[ZPn]^t \hskip .04in x[ZQn] xy [PRn]y \hskip .04in ^t[QRn] \approx [ZPn]^t \hskip .04in x[ZQn] yx [PRn]y \hskip .04in ^t[QRn]$}& $xxyy$, $xytxy$\\
\hline

\protect\rule{0pt}{10pt}$xxyy$, $xytxty$&  $xytyz_1^2z_2^2 \dots z_n^2x \approx yxtyz_1^2z_2^2 \dots z_n^2x$ & $xytxy$, $xytyx$\\
\hline

\protect\rule{0pt}{10pt}$xtxyty$, $xytxy$, $xytyx$ & $xy[An]yxt[nA]  \approx yx[An]xyt[nA]$&$xyxyx, \{yx^my| m>1\}$ \\
\hline

\protect\rule{0pt}{10pt}$\{ytyx^dtx^{k-d}, x^{k-d}tx^dyty \mid $ & $yt_1x^{k-1}yp_1^2 \dots p_n^2zxt_2z \approx$  &  $x^kyty$, $k>2$ \\

\protect\rule{0pt}{10pt}$ 0 <d <k\}$, $xxyy$ &  $\approx yt_1x^k yp_1^2 \dots p_n^2zt_2z$ &  $xytxty$, $xtytxy$ \\
\hline

\end{tabular}
\caption{Seven NFB intervals $[I, \iso(B2, \Sigma)]$ \protect\rule{0pt}
{11pt}}

\label{classes}
\end{center}
\end{table}

As in \cite{JS}, the words $x_1x_2 \dots x_n$ and $x_nx_{n-1}\dots x_1$ are denoted by $[Xn]$ and $[nX]$ respectively.
The word $x_1 y_1 x_2 y_2 \dots x_{n}y_n$ is denoted by $[XYn]$. We use ${\bf U}^t$ ($^t{\bf U}$) to denote the word obtained from  a word $\bf U$ by inserting a linear variable after (before) each occurrence of each variable in $\bf U$. For example, $[Zn]^t = z_1tz_2t \dots tz_nt$.
The following words for $n>3$ were used by M. Jackson  to prove Lemma 5.4 in \cite{MJ}:
\[[X(n^2)\pi] = (x_1 x_{1 + n} \dots x_{1 + n^2 - n})(x_2 x_{2 + n} \dots x_{2+n^2 -n}) \dots  (x_n x_{2n} \dots x_{n^2}).\]
For each $n>3$, we denote the $n^{th}$ word in this sequence by $[X(n^2)\pi]$ to remind us that it is the result of applying a certain permutation $\pi$ to $[X(n^2)] = x_1 x_2 \dots x_{n^2}$.
We need the following sufficient conditions under which a monoid is non-finitely based.

\begin{lemma} \label{nfbsuf} For every monoid $S$ the following is true:

 (i) \cite[Lemma 4.4]{JS} If the word $xytxy$ is an isoterm for $S$ and for each $n>1$, $S$ satisfies the identity ${\bf U}_n \approx {\bf V}_n$ in Row 1
of Table \ref{classes}, then $S$ is NFB;

(ii) \cite[Lemma 5.2]{OS} If the word $xytyx$ is an isoterm for $S$ and for each $n>1$, $S$ satisfies the identity ${\bf U}_n \approx {\bf V}_n$ in Row 2
of Table \ref{classes}, then $S$ is NFB;

(iii) \cite[Lemma 5.4]{MJ} If the words $xytxy$ and $xytyx$ are isoterms for $S$ and for each $n>3$, $S$ satisfies the identity ${\bf U}_n \approx {\bf V}_n$ in Row 3
of Table \ref{classes}, then $S$ is NFB;

(iv) \cite[Theorem 4.4(iii)]{OS1} If the word $xtxyty$ is an isoterm for $S$ and for each $n>1$, $S$ satisfies the identity ${\bf U}_n \approx {\bf V}_n$ in Row 4 of Table \ref{classes}, then $S$ is NFB;

(v) \cite[Theorem 4.4(iv)]{OS1} If the words $xxyy$ and $xytxty$ are isoterms for $S$ and for each $n>1$, $S$ satisfies the identity ${\bf U}_n \approx {\bf V}_n$ in Row 5 of Table \ref{classes}, then $S$ is NFB;

(vi) \cite[Theorem 4.4(v)]{OS1} If the words $\{xtxyty, xytxy, xytyx\}$ are isoterms for $S$ and for each $n>1$, $S$ satisfies the identity ${\bf U}_n \approx {\bf V}_n$ in Row 6 of Table \ref{classes}, then $S$ is NFB;

(vii) \cite[Theorem 4.4(viii)]{OS1} Fix $k >2$. If the words
\[\{xxyy\} \cup \{ytyx^dtx^{k-d}, x^{k-d}tx^dyty | 0 <d < k\}\]
 are isoterms for $S$ and for each $n>1$, $S$ satisfies the identity ${\bf U}_n \approx {\bf V}_n$ in Row 7 of Table \ref{classes}, then $S$ is NFB.

\end{lemma}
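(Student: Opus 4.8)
All seven parts follow the same template: we must show that a certain monoid $S$ is non-finitely based, under the hypothesis that a prescribed set of words consists of isoterms for $S$ and that $S$ satisfies the whole family of identities $\{{\bf U}_n\approx{\bf V}_n\}_{n>1}$ (or $n>3$ in part (iii)) from the relevant row of Table \ref{classes}. Since each item is already attributed to a published source (\cite{JS}, \cite{OS}, \cite{MJ}, \cite{OS1}), the ``proof'' here is really a matter of quoting those results, so the plan is to check that the statement of each item is a faithful restatement of the cited lemma/theorem, with the table entries spelled out in the notation fixed just above (namely $[Xn]$, $[nX]$, $[XYn]$, ${\bf U}^t$, $^t{\bf U}$, and Jackson's permutation word $[X(n^2)\pi]$).

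For each row I would proceed as follows. First, translate the compact table entry into an explicit identity in the alphabet $\{x,y,z,t,\dots\}$ using the abbreviations from the paragraph preceding the lemma; for instance Row 1 reads $x_1y_1\cdots x_ny_n\, t\, y_n\cdots y_1\, x_n\cdots x_1 \approx y_n\cdots y_1\, x_n\cdots x_1\, t\, x_1y_1\cdots x_ny_n$, and Row 4 involves the eight-variable family built from $[ZPn]$, $[ZQn]$, $[PRn]$, $[QRn]$ with the inserted linear letters indicated by the ${}^t(\cdot)$ superscripts. Second, confirm that the isoterm hypotheses listed in the item match the ``critical word'' hypotheses in the corresponding source result: $xytxy$ for (i), $xytyx$ for (ii), the pair $xytxy,xytyx$ for (iii), $xtxyty$ for (iv), the pair $xxyy,xytxty$ for (v), the triple $xtxyty,xytxy,xytyx$ for (vi), and the family $\{xxyy\}\cup\{ytyx^dtx^{k-d},x^{k-d}tx^dyty\mid 0<d<k\}$ for (vii) with $k>2$ fixed. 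Third, observe that the conclusion ``$S$ is NFB'' is exactly what is proved in each cited place, typically by producing, for every finite $n$, an identity (the $n$th instance in the row) that holds in $S$ but does not follow from any fixed finite subset of $S$'s identities — the standard ``critical identity not derivable without a long enough isoterm'' argument.

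The one place where more than bookkeeping is needed is part (iii), the row attached to Jackson's permutation words $[X(n^2)\pi]$: here one cannot just cite a formula, one must recall \emph{which} permutation $\pi$ Jackson uses in \cite[Lemma 5.4]{MJ} and why the resulting chain of identities $[X(n^2)]t[X(n^2)\pi]\approx[X(n^2)\pi]t[X(n^2)]$ is non-finitely axiomatizable relative to the isoterm conditions on $xytxy$ and $xytyx$. I would simply defer to that lemma, noting that the restriction $n>3$ (rather than $n>1$) in the table is there precisely because Jackson's construction needs $n$ large enough for the permutation to do its work. Modulo that, the whole statement is an assembly of known results, and the main ``obstacle'' is purely expository: making sure the table abbreviations in the seven rows unfold to exactly the identities occurring in the seven cited theorems, and that the seven isoterm hypotheses line up — a verification I would carry out row by row rather than in the body of the proof.
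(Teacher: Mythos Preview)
Your reading is correct: the paper gives no proof of this lemma at all --- each of the seven items carries a citation, and the lemma is simply a compilation of results from \cite{JS}, \cite{OS}, \cite{MJ}, and \cite{OS1}, stated here in the paper's own notation so they can be applied uniformly in Theorem~\ref{t1}. Your plan to verify row by row that the table abbreviations unfold to the identities in the cited sources is exactly the right (and only) thing to do.

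One small caution on that unfolding: in your expansion of Row~1 you wrote $y_n\cdots y_1\,x_n\cdots x_1$ for $[Yn][Xn]$, but the paper's convention is $[Yn]=y_1y_2\cdots y_n$ (forward) and $[nY]=y_n\cdots y_1$ (reverse), so Row~1 should read $x_1y_1\cdots x_ny_n\,t\,y_1\cdots y_n\,x_1\cdots x_n \approx y_1\cdots y_n\,x_1\cdots x_n\,t\,x_1y_1\cdots x_ny_n$. This does not affect your argument, but if you actually carry out the row-by-row comparison with the sources you will want the directions right.
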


The following two facts can be easily verified and are needed only to prove Theorem \ref{t1}.

\begin{fact} \label{ab} Let $\bf u$ be a word that contains only variables $a$ and $b$.
If $\bf u$ contains an occurrence of $a$ that precedes
an occurrence of $b$ then $\bf u$ contains $ab$ as a subword.
\end{fact}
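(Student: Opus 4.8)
The plan is to prove Fact~\ref{ab} by an elementary argument on the first position in $\bf u$ where the letter changes from one symbol to the other. Since $\bf u$ is a word over the two-letter alphabet $\{a,b\}$ and $\bf u$ contains an occurrence of $a$ that precedes an occurrence of $b$, the word $\bf u$ cannot be of the form $b^i a^j$ for any $i,j \ge 0$ (such a word has every $b$ before every $a$). Hence, reading $\bf u$ from left to right, there must be some position at which an $a$ is immediately followed by a $b$, and that is exactly an occurrence of the subword $ab$.

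More precisely, I would argue as follows. Write ${\bf u} = c_1 c_2 \dots c_m$ with each $c_i \in \{a,b\}$. By hypothesis there are indices $p < q$ with $c_p = a$ and $c_q = b$. Consider the largest index $r$ with $p \le r < q$ such that $c_r = a$; this is well defined since $c_p = a$ witnesses that the set of such indices is nonempty, and it is bounded above by $q-1$. By maximality of $r$ within the range $[p, q-1]$ together with $c_q = b$, every index $i$ with $r < i \le q$ satisfies $c_i = b$; in particular $c_{r+1} = b$. Thus $c_r c_{r+1} = ab$, so $\bf u$ contains $ab$ as a (contiguous) subword.

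There is essentially no obstacle here; the statement is purely combinatorial on finite words and the only care needed is to make sure the index $r$ is chosen so that the next letter is forced to be $b$. An equivalent phrasing, which I might use instead for brevity, is: among all occurrences of $a$ in $\bf u$ that lie to the left of some occurrence of $b$, pick the rightmost one; the letter immediately after it exists and must be $b$ (if it were $a$, that later $a$ would also lie to the left of a $b$, contradicting rightmost choice), giving the subword $ab$. I would present whichever version reads more cleanly in context, since both are short and the Fact is explicitly marked as easily verified and used only in the proof of Theorem~\ref{t1}.
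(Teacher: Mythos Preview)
Your proof is correct. The paper itself does not give a proof of this fact, remarking only that it ``can be easily verified''; your argument via the rightmost $a$ preceding a $b$ is exactly the kind of routine verification the paper leaves to the reader.
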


\begin{fact} \label{abtba} (i) For any set of words $W$ we have $W \preceq xytxy$ if and only if $W^\le$ contains a word of the form $ab{\bf P}ab$ for some possibly empty word $\bf P$ and some distinct  letters $a$ and $b$.

(ii) If each word in $W$ is  block-2-simple then $W \preceq xytyx$ if and only if $W^\le$ contains a word of the form $ab{\bf P} ba$ for some possibly empty word $\bf P$  and some distinct letters $a$ and $b$.
\end{fact}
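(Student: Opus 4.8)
\textbf{Proof proposal for Fact \ref{abtba}.}

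The plan is to prove both directions of each equivalence by relating the combinatorial presence of a subword $ab{\bf P}ab$ (resp.\ $ab{\bf P}ba$) in $W^c$ to the existence of a monoid $S$ that is an isoterm-separating witness, using Proposition \ref{pr2} and the fact that $W \preceq xytxy$ means every monoid for which all words of $W$ are isoterms also has $xytxy$ as an isoterm (equivalently, by Proposition \ref{pr1}, that $xytxy$ is an isoterm for $S(W)$). So the real content is: $xytxy$ is an isoterm for $S(W)$ iff $W^c$ contains no word of the form $ab{\bf P}ab$.

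For part (i), first I would handle the ``if'' direction contrapositively: suppose some ${\bf w} \in W^c$ has the form $ab{\bf P}ab$. Then the substitution $\Theta$ sending $x \mapsto a$, $y \mapsto b$, $t \mapsto {\bf P}$ (and say all other variables to $\epsilon$) shows $xytxy$ is \emph{not} an isoterm for $S(W)$, since $\Theta(xytxy) = ab{\bf P}ab \in W^c$ while $\Theta(yxtxy) = ba{\bf P}ab$ is a different word and both lie in the appropriate quotient; more carefully one checks the identity $xytxy \approx xytyx$ or $yxtxy \approx xytxy$ is satisfied by $S(W)$ via Lemma \ref{lemmaOS} when no such subword occurs, so $W^c$ containing $ab{\bf P}ab$ forces $W \preceq xytxy$ by Proposition \ref{pr1}(ii). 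For the ``only if'' direction, assume $W^c$ contains no word of the form $ab{\bf P}ab$ with $a \ne b$. I want to show $S(W) \not\models$ nothing forces $xytxy$, i.e.\ that $xytxy$ is an isoterm for $S(W)$ — equivalently that $S(W)$ satisfies no nontrivial identity ${\bf u} \approx {\bf v}$ with ${\bf u} = xytxy$. By Lemma \ref{prec}/Proposition \ref{pr1} it suffices to exhibit a balanced identity argument: apply Lemma \ref{lemmaOS} to any candidate identity $xytxy \approx {\bf v}$; the unstable pair is $\{x,y\}$, and for any substitution $\Theta$ with $\Theta(x)$ containing $a$ and $\Theta(y)$ containing $b \ne a$, the word $\Theta(xytxy)$ contains an occurrence of $a$ then $b$ then (after $\Theta(t)$) $a$ then $b$, so it contains $a \cdots b \cdots a \cdots b$ and hence, reading off the relevant letters, a subword of the form $ab{\bf Q}ab$; but such a word cannot be in $W^c$ by hypothesis, and symmetrically neither can $\Theta({\bf v})$ if it also produced such a pattern (here one uses that ${\bf v}$, to differ from $xytxy$ in a balanced way with only $\{x,y\}$ unstable, is essentially $yxtxy$, $xytyx$, etc., each of which also yields the $ab{\bf Q}ab$ pattern under such $\Theta$ — this is the step requiring a short case check). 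Then Lemma \ref{lemmaOS} gives $S(W) \models xytxy \approx {\bf v}$ only if the identity is trivial, so $xytxy$ is an isoterm.

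For part (ii), the argument is the dual, but now the block-$2$-simplicity hypothesis is what makes it work: with $xytyx \approx {\bf v}$ and unstable pair $\{x,y\}$, a substitution $\Theta$ with $\Theta(x) \ni a$, $\Theta(y) \ni b$ produces in $\Theta(xytyx)$ the letter pattern $a \cdots b \cdots b \cdots a$. The subtlety is that to extract a genuine subword $ab{\bf P}ba$ one needs the two middle $b$'s to be consecutive occurrences of (images of) $y$ with no intervening ``interfering'' structure, and dually for the outer $a$'s; block-$2$-simplicity of the words in $W$ ensures that when $\Theta({\bf u}) \in W^c$, the block containing the relevant occurrences involves at most two variables, so no third variable can sit between them and break the $ab{\bf P}ba$ shape — Fact \ref{ab} is the tool that then lets one actually read off $ab$ and $ba$ as literal subwords from the orderings of $a$-occurrences and $b$-occurrences. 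The converse (``if'') direction is as in part (i): a word $ab{\bf P}ba \in W^c$ yields via $x \mapsto a$, $y \mapsto b$, $t \mapsto {\bf P}$ a demonstration that $xytyx$ is not an isoterm for $S(W)$, using Fact \ref{xy3} / Fact \ref{xtx} to see that $S(W) \models xytyx \approx xytxy$ (say) is the only possibility, and that already shows $W \preceq xytyx$.

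The main obstacle I anticipate is the bookkeeping in the ``only if'' direction of part (ii): correctly arguing that, without block-$2$-simplicity, the letter-level pattern $a\cdots b\cdots b\cdots a$ in $\Theta({\bf u})$ need not give a literal subword $ab{\bf P}ba$ (a third variable could intervene), and pinning down exactly how the block hypothesis on $W$ — not on the identity — rules this out. Everything else is a routine combination of Lemma \ref{lemmaOS}, Proposition \ref{pr1}, Fact \ref{ab}, and the observation that the only words that can form a nontrivial identity with $xytxy$ or $xytyx$ are the ones listed in Facts \ref{xtx}–\ref{xy3}, so the case analysis over ${\bf v}$ is finite and short.
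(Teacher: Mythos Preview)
The paper does not actually prove this fact; it states that Facts \ref{ab} and \ref{abtba} ``can be easily verified,'' so there is no paper argument to compare against. However, your proposal contains a genuine and pervasive error of direction.

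By Proposition \ref{pr1}, $W \preceq xytxy$ is \emph{equivalent} to $xytxy$ being an isoterm for $S(W)$. Thus the content of (i) is: $xytxy$ is an isoterm for $S(W)$ iff $W^c$ \emph{does} contain some $ab{\bf P}ab$ with $a\ne b$. You state the opposite (``iff $W^c$ contains no word of the form $ab{\bf P}ab$'') and this reversal propagates everywhere. When you suppose $ab{\bf P}ab\in W^c$ and assert that the substitution $x\mapsto a,\ y\mapsto b,\ t\mapsto{\bf P}$ ``shows $xytxy$ is not an isoterm,'' that is backwards: the fact that $\Theta(xytxy)$ is a nonzero element of $S(W)$ is precisely what one uses to refute each candidate nontrivial identity $xytxy\approx{\bf v}$ and thereby show $xytxy$ \emph{is} an isoterm. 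Conversely, when you assume $W^c$ contains no $ab{\bf P}ab$ and announce the goal of proving $xytxy$ is an isoterm, the correct goal is the opposite: one picks a single ${\bf v}$ (say $yxtyx$) and uses Lemma \ref{lemmaOS} to show $S(W)\models xytxy\approx yxtyx$, hence $xytxy$ is \emph{not} an isoterm. You also misread Lemma \ref{lemmaOS}: it is a sufficient condition for $S(W)$ to \emph{satisfy} an identity, not a tool for showing an identity fails, so your clause ``Lemma \ref{lemmaOS} gives $S(W)\models xytxy\approx{\bf v}$ only if the identity is trivial'' inverts its logic.

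Two smaller points once the directions are fixed. First, the step ``$\Theta(xytxy)$ contains $a\cdots b\cdots a\cdots b$, hence a subword $ab{\bf Q}ab$'' is not valid as written: what is true is that $\Theta(x)\Theta(y)$, not being a power of a single letter (else $\Theta(x)$ and $\Theta(y)$ would commute), contains \emph{some} adjacent pair $cd$ with $c\ne d$, and the two copies of $\Theta(x)\Theta(y)$ then yield a subword $cd{\bf Q}cd$ --- but $c,d$ need not be your original $a,b$. Second, for part (ii) the block-$2$-simple hypothesis is needed only in the direction $W\preceq xytyx \Rightarrow W^c$ contains some $ab{\bf P}ba$ (equivalently: no such subword $\Rightarrow$ not an isoterm). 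There it is used so that when $\Theta(xytyx)\in W^c$ one may restrict attention to the two letters $a,b$ in the relevant block and invoke Fact \ref{ab} to locate $ab$ and $ba$ as literal adjacent pairs; without the hypothesis this can fail (e.g.\ $abccab$ arises as $\Theta(xytyx)$ with $\Theta(x)=ab,\ \Theta(y)=c,\ \Theta(t)=\epsilon$ yet contains no subword $XY{\bf P}YX$). Your intuition about where the hypothesis enters is correct, but again the argument runs opposite to what you wrote.
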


\begin{theorem} \label{t1} Take sets of words $I$ and $N$ from one of the seven rows in  Table \ref{classes}.
Let $W$ be a set of block-2-simple words such that $W \preceq I$ but $W \not \preceq {\bf n}$ for any ${\bf n} \in N$. Then
$W$ is NFB.
\end{theorem}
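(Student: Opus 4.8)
The plan is to derive that $S(W)$ is non-finitely based from the item of Lemma \ref{nfbsuf} matching the chosen row of Table \ref{classes}: the seven rows are set up to feed the seven parts (i)--(vii) of that lemma. Fixing a row and writing $S=S(W)$, I must check the two hypotheses of the corresponding item, namely that the words it lists are isoterms for $S$, and that $S\models {\bf U}_n\approx {\bf V}_n$ for every $n$ in the stated range.

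The isoterm hypotheses come for free. In each row the words that Lemma \ref{nfbsuf} requires to be isoterms are, up to the equivalences $\sim$ of Facts \ref{xy3} and \ref{abtba}, exactly the words of the set $I$; since $W\preceq I$, Proposition \ref{pr1}(i)$\to$(ii) says every word of $I$, hence every word $\sim$-equivalent to one of them, is an isoterm for $S(W)$. So this part is immediate.

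The substance of the argument is proving $S(W)\models {\bf U}_n\approx {\bf V}_n$ for each $n$. Every identity ${\bf U}_n\approx {\bf V}_n$ in the table is balanced, so I will obtain it from Corollary \ref{nfbcombinations} applied with $L$ the set of all block-$2$-simple words. This $L$ satisfies $L=L^c$ since a subword of a block-$2$-simple word is block-$2$-simple; $W\subseteq L$ because the words of $W$ are block-$2$-simple; and $W\not\preceq{\bf n}$ for every ${\bf n}\in N$ by hypothesis. It remains to verify conditions (i) and (ii) of Corollary \ref{nfbcombinations}: for every pair $\{x,y\}$ unstable in ${\bf U}_n\approx {\bf V}_n$ and every substitution $\Theta$ with $a\in\Theta(x)$ and $b\in\Theta(y)$ for some $a\ne b$, if $\Theta({\bf U}_n)$ is block-$2$-simple then $\Theta({\bf U}_n)\preceq {\bf n}$ for some ${\bf n}\in N$, and likewise for $\Theta({\bf V}_n)$. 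This is done row by row. One first reads off the unstable pairs of ${\bf U}_n\approx {\bf V}_n$: in each row they are the pairs of distinct non-linear variables whose relative order, or pattern of nesting, is reversed from the left side to the right side (a single pair $\{x,y\}$ in most rows; many pairs in Rows 1 and 3). Fixing such a pair $\{x,y\}$ and a substitution $\Theta$ with $a\in\Theta(x)$, $b\in\Theta(y)$, $a\ne b$, one tracks, inside $\Theta({\bf U}_n)$, the occurrence of $a$ produced by each of the two occurrences of $x$ and the occurrence of $b$ produced by each of the two occurrences of $y$: because $x$ and $y$ are nested ``the wrong way'' in ${\bf U}_n$, these four occurrences appear in $\Theta({\bf U}_n)$ in one of the orders $a\,b\,\cdots\,a\,b$ or $a\,b\,\cdots\,b\,a$, so by Fact \ref{abtba}(i), respectively by Fact \ref{abtba}(ii) (this is where the assumption that $\Theta({\bf U}_n)$ is block-$2$-simple is used), we get $\Theta({\bf U}_n)\preceq xytxy$ or $\Theta({\bf U}_n)\preceq xytyx$. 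For the rows where $N$ is $\{xytxy\}$, $\{xytyx\}$ or $\{xytxy,xytyx\}$ this already provides the required ${\bf n}\in N$. In the remaining rows, where $N$ contains words such as $xxyy$, $xyxyx$, $yx^m y$ or $x^k yty$, one invokes block-$2$-simplicity a second time: the block of $\Theta({\bf U}_n)$ carrying the occurrences of $x$ and $y$ depends on at most two variables, which must then be $a$ and $b$, so the $\Theta$-images of the other variables of that block contribute only the letters $a$ and $b$, and Fact \ref{ab} pins down the exact subword pattern present, giving $\Theta({\bf U}_n)\preceq{\bf n}$ for the appropriate ${\bf n}\in N$. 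The symmetric computation handles $\Theta({\bf V}_n)$. With (i) and (ii) in hand, Corollary \ref{nfbcombinations} yields $S(W)\models {\bf U}_n\approx {\bf V}_n$, and then the corresponding item of Lemma \ref{nfbsuf} shows that $S(W)$, equivalently $W$, is non-finitely based.

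The main obstacle is precisely this last row-by-row verification of (i) and (ii): the informal ``wrong-way nesting'' argument must be made rigorous for each of the seven identities, and the rows whose set $N$ is not contained in $\{xytxy,xytyx\}$ — Rows 4, 6 and 7 — additionally need the argument via block-$2$-simplicity and Fact \ref{ab} to land in the correct member of $N$, with Row 7 requiring this uniformly in the parameter $k$. Keeping careful track of which of ${\bf U}_n,{\bf V}_n$ produces which pattern and of which word of $N$ each image falls below is where the bookkeeping lies; everything else in the proof is formal consequence of the results quoted above.
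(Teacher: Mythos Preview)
Your plan is correct and mirrors the paper's proof: the paper likewise takes $L$ to be the set of all block-$2$-simple words, verifies conditions (i)--(ii) of Corollary \ref{nfbcombinations} row by row using Facts \ref{ab} and \ref{abtba} (with the extra case analysis you anticipate for Rows 4, 6 and 7), and then invokes the matching part of Lemma \ref{nfbsuf}. Your remark that the isoterm hypotheses come for free from $W\preceq I$ is exactly how the paper treats them as well, so the only real work is the bookkeeping you flag in the final paragraph.
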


\begin{proof} Each time we use Corollary \ref{nfbcombinations} we take $L$ to be the set of all block-2-simple words. Evidently, this set of words is
closed under taking subwords.

 {\bf Row 1 in Table \ref{classes}.}  Here $I = \{xytxy\}$ and $N = \{xytyx\}$.

Each unstable pair of variables  in ${\bf U}_n \approx {\bf V}_n$ is of the form $\{x_i,y_j\}$ for some $1 \le i,j \le n$. If $\{x_i, y_j\}$ is an unstable pair in ${\bf U}_n \approx {\bf V}_n$, then ${\bf U}_n$ deletes to $x_i y_jty_jx_i$.
Let $\Theta: \mathfrak A \rightarrow \mathfrak A ^*$  be a substitution such that $\Theta(x_i)$ contains some letter $a$ and $\Theta(y_j)$ contains $b\ne a$.
If $\Theta({\bf U}_n)$ is a block-2-simple word, then by Fact \ref{ab}, $\Theta([XYn])$ contains $ab$ as a subword.
Similarly, $\Theta([Yn][Xn])$ contains $ba$ as a subword.
Then $\Theta({\bf U}_n)$ contains a subword $ab{\bf P}ba$ for some possibly empty word $\bf P$. Fact \ref{abtba}(ii) implies that $\Theta({\bf U}_n) \preceq xytyx$.  By symmetric arguments, we show that if  $\Theta({\bf V}_n)$ is a block-2-simple word then  $\Theta({\bf V}_n) \preceq xytyx$.

 Corollary \ref{nfbcombinations} implies that for each $n>1$, the monoid $S(W)$ satisfies the identity ${\bf U}_n \approx {\bf V}_n$ in Row 1 of  Table \ref{classes}. The rest follows from Lemma \ref{nfbsuf}(i).

{\bf Row 2 in Table \ref{classes}.}  Here $I = \{xytyx\}$ and $N = \{xytxy\}$.

The only unstable pair of variables  in ${\bf U}_n \approx {\bf V}_n$ are  $\{x_i,y\}$, $i=1, \dots n$. Fix some  $1 \le i \le n$ and a substitution $\Theta: \mathfrak A \rightarrow \mathfrak A ^*$ such that $\Theta(y)$ contains some letter $a$ and $\Theta(x_i)$ contains $b\ne a$.
If  $\Theta({\bf U}_n)$ is a block-2-simple word, by Fact \ref{ab}, the word $\Theta(y[Xn])$ contains $ab$ as a subword. Similarly, $\Theta(y[nX])$ also contains $ab$ as a subword. So, $\Theta({\bf U}_n)$ contains a subword
$ab{\bf P}ab$ for some possibly empty word $\bf P$. Then by Fact \ref{abtba}, we have $\Theta({\bf U}_n) \preceq xytxy$. By symmetric arguments, we show that if  $\Theta({\bf V}_n)$ is a block-2-simple word then  $\Theta({\bf V}_n) \preceq xytxy$.

 Corollary \ref{nfbcombinations} implies that for each $n> 1$, the monoid $S(W)$ satisfies the identity ${\bf U}_n \approx {\bf V}_n$ in Row 2 of  Table \ref{classes}. The rest follows from Lemma \ref{nfbsuf}(ii).

{\bf Row 3 in Table \ref{classes}.}
  Here $I = \{xytxty, xtytxy\}$ and $N = \{xytxy, xytyx\}$.

 Let $\{x_i,x_j\}, 1 \le i<j \le n$ be an unstable pair of variables  in ${\bf U}_n \approx {\bf V}_n$.  Let $\Theta$ be a substitution such that $\Theta(x_i)$ contains some letter $a$ and $\Theta(x_j)$ contains $b\ne a$. If $\Theta({\bf U}_n)$ is a block-2-simple word, by Fact \ref{ab}, the word $\Theta([X(n^2)])$ contains $ab$ as a subword.
Similarly, $\Theta([X(n^2)\pi]))$ contains either $ab$ or $ba$ as a subword. So, $\Theta({\bf U}_n)$ contains a subword $ab{\bf P}ab$ or $ab{\bf P}ba$ for some possibly empty word $\bf P$. Then by Fact \ref{abtba}, we have that either $\Theta({\bf U}_n) \preceq xytxy$ or  $\Theta({\bf U}_n) \preceq xytyx$. By symmetric arguments, we show that if  $\Theta({\bf V}_n)$ is a block-2-simple word then  either $\Theta({\bf V}_n) \preceq xytxy$ or  $\Theta({\bf V}_n) \preceq xytyx$.

 Corollary \ref{nfbcombinations} implies that for each $n> 3$, the monoid $S(W)$ satisfies the identity ${\bf U}_n \approx {\bf V}_n$ in Row 3 of  Table \ref{classes}. The rest follows from Lemma \ref{nfbsuf}(iii).

{\bf Row 4 in Table \ref{classes}.}
Here $I = \{xtxyty\}$ and $N = \{xxyy, xytxy\}$.

The only unstable pair  of variables   in ${\bf U}_n \approx {\bf V}_n$ is $\{x,y \}$.  Let $\Theta$ be a substitution such that $\Theta(x)$ contains some variable $a$ and $\Theta(y)$ contains $b\ne a$.

First we suppose that $\Theta({\bf U}_n)$ is a block-2-simple word.
If $\Theta([ZQn])$ contains $b$ or  $\Theta([PRn])$ contains $a$ then by Fact \ref{ab}, the word $\Theta({\bf U}_n)$ contains a subword $ab{\bf C}ab$ for some possibly empty word $\bf C$. Therefore, $\Theta({\bf U}_n) \preceq xytxy$ by  Fact \ref{abtba}.
So, we can assume that $\Theta([ZQn])=a^k$ for some $k \ge 0$ and $\Theta([PRn])=b^q$ for some $q \ge 0$. If $\Theta(x)$ contains $b$ or $\Theta(y)$ contains $a$, then
in view of Facts \ref{ab} and \ref{abtba}, we have $\Theta({\bf U}_n) \preceq xytxy$. If $\Theta(x)$ is a power of $a$ and $\Theta(y)$ is a power of $b$, then  $\Theta({\bf U}_n) \preceq xxyy$.

  Now we suppose that $\Theta({\bf V}_n)$ is a block-2-simple word.  Then, in view of  Fact \ref{ab}, the word $\Theta(x[ZQn]yx[PRn]y)$ contains  a subword $ab{\bf C}ab$ for some possibly empty word $\bf C$. Therefore, $\Theta({\bf V}_n) \preceq xytxy$ by  Fact \ref{abtba}.

 Corollary \ref{nfbcombinations} implies that for each $n> 1$, the monoid $S(W)$ satisfies the identity ${\bf U}_n \approx {\bf V}_n$ in Row 4 of  Table \ref{classes}. The rest follows from Lemma \ref{nfbsuf}(iv).

{\bf Row 5 in Table \ref{classes}.}
Here $I = \{xxyy, xytxty\}$ and $N = \{xytxy, xytyx\}$.

The only unstable pair  of variables  in ${\bf U}_n \approx {\bf V}_n$ is $\{x,y\}$.  Let $\Theta$ be a substitution such that $\Theta(x)$ contains some letter $a$ and $\Theta(y)$ contains $b\ne a$. If $\Theta({\bf U}_n)$ is a block-2-simple word then  in view of  Fact \ref{ab}, the word $\Theta({\bf U}_n)$ contains  a subword $ab{\bf C}ba$ for some possibly empty word $\bf C$. Therefore, $\Theta({\bf U}_n) \preceq xytyx$ by  Fact \ref{abtba}. If $\Theta({\bf V}_n)$ is a block-2-simple word then by using similar arguments one can show that $\Theta({\bf U}_n) \preceq xytxy$.

 Corollary \ref{nfbcombinations} implies that for each $n> 1$, the monoid $S(W)$ satisfies the identity ${\bf U}_n \approx {\bf V}_n$ in Row 5 of  Table \ref{classes}. The rest follows from Lemma \ref{nfbsuf}(v).

{\bf Row 6 in Table \ref{classes}.}
Here $I = \{xtxyty, xytxy, xytyx\}$ and $N = \{xyxyx\} \cup \{y x^my \mid m>1\}$.

The only unstable pair  of variables   in ${\bf U}_n \approx {\bf V}_n$ is $\{x,y\}$.  Let $\Theta$ be a substitution such that  $\Theta(x)$ contains some letter $a$ and $\Theta(y)$ contains letter $b\ne a$.  If  $\Theta({\bf U}_n)$ is a block-2-simple word, the content of  $\Theta(xy[An]yx)$ is $\{a,b\}$.  Now it is easy to see that modulo renaming letters the word $\Theta(xy[An]yx)$  contains either $ababa$ or $ab^ma$ for some $m>1$ as a subword.  Therefore, $\Theta({\bf U}_n) \preceq xyxyx$ or  $\Theta({\bf U}_n) \preceq yx^my$ for some $m>1$.  If $\Theta({\bf V}_n)$ is a block-2-simple word, then by symmetry  $\Theta({\bf V}_n) \preceq xyxyx$ or  $\Theta({\bf V}_n) \preceq yx^my$ for some $m>1$.

 Corollary \ref{nfbcombinations} implies that for each $n> 1$, the monoid $S(W)$ satisfies the identity ${\bf U}_n \approx {\bf V}_n$ in Row 6 of  Table \ref{classes}. The rest follows from Lemma \ref{nfbsuf}(vi).

{\bf Row 7 in Table \ref{classes}.} Fix $k>2$.
Here $I = \{xxyy\} \cup \{ytyx^dtx^{k-d}, x^{k-d}tx^dyty | 0 <d <k\}$ and $N = \{x^kyty, xytxty, xtytxy\}$.

Each unstable pair of variables  in ${\bf U}_n \approx {\bf V}_n$ is of the form
$\{x,y\}$ or $\{x,z\}$ or $\{x,p_i\}$ for some $1 \le i \le n$.

Let $\Theta$ be a substitution such that $\Theta(x)$ contains some $a \in \mathfrak A$. If $\Theta (x)$ is not a power of $a$ then
$\Theta(x^{k-1}) \preceq xytxty \sim xytytx$ and consequently, $\Theta({\bf U}_n)\preceq xytxty \in N$ and  $\Theta({\bf V}_n)\preceq xytxty \in N$.
So, we can assume that $\Theta(x) = a^p$ for some $p>0$. Consider three cases.

{\bf Case 1:} $\Theta(y)$ contains $b\ne a$.
If $\Theta({\bf U}_n)$ is a block-2-simple word, then by Fact \ref{ab}, $\Theta(x^{k-1}yp_1^2 \dots p_n^2z)$
contains $a^{k-1}b$ as a subword. Then $\Theta({\bf U}_n) \preceq btatab \in N$ and $\Theta({\bf V}_n) \preceq btatab \sim xtytxy \in N$.

{\bf Case 2:} $\Theta(z)$ contains $b\ne a$.

If $\Theta({\bf U}_n)$ is a block-2-simple word, then by Fact \ref{ab}, $\Theta(x^{k-1}yp_1^2 \dots p_n^2zx)$
contains $ab{\bf C}a$ as a subword for some word ${\bf C} \in\{a,b\}^*$. Then  $\Theta({\bf U}_n) \preceq abtatb \in N$.

If $\Theta({\bf V}_n)$ is a block-2-simple word, then by Fact \ref{ab}, $\Theta(x^{k}yp_1^2 \dots p_n^2z)$
contains $a^{k}b$ as a subword. Then  $\Theta({\bf V}_n) \preceq a^kbtb \in N$.

{\bf Case 3:} For some $1\le i \le n$, $\Theta(p_i)$ contains $b\ne a$.

If $\Theta({\bf U}_n)$ is a block-2-simple word, then by Fact \ref{ab}, $\Theta(x^{k-1}yp_1^2 \dots p_n^2zx)$
contains $ab{\bf C}ba$ as a subword for some word ${\bf C} \in\{a,b\}^*$. Then  $\Theta({\bf U}_n) \preceq abtba \preceq xytxty \in N$.

If $\Theta({\bf V}_n)$ is a block-2-simple word, then by Fact \ref{ab}, $\Theta(x^{k}yp_1^2 \dots p_n^2z)$
contains $a^kb{\bf C}b$ as a subword for some word ${\bf C} \in\{a,b\}^*$. Then  $\Theta({\bf V}_n) \preceq a^kbtb \in N$.

 Corollary \ref{nfbcombinations} implies that for each $k>2$ and $n> 1$, the monoid $S(W)$ satisfies the identity ${\bf U}_n \approx {\bf V}_n$ in Row 7 of  Table \ref{classes}. The rest follows from Lemma \ref{nfbsuf}(vii).
\end{proof}

Theorem \ref{t1}(i)--(iii) immediately imply the following.

\begin{cor} \label{fb2} Let $W$ be a set of block-2-simple words such that $W\preceq \{xytxty, xtytxy\}$.
Then either $W$ is NFB or $W \preceq \{xytxy, xytyx\}$.
\end{cor}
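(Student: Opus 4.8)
The plan is to derive the dichotomy directly from Theorem \ref{t1} applied to the first three rows of Table \ref{classes}. Suppose $W$ is a set of block-2-simple words with $W \preceq \{xytxty, xtytxy\}$, and suppose $W$ is \emph{not} NFB; the goal is to show $W \preceq \{xytxy, xytyx\}$, i.e.\ $W \preceq xytxy$ and $W \preceq xytyx$. The key observation is that Row 3 of the table has $I = \{xytxty, xtytxy\}$ and $N = \{xytxy, xytyx\}$, so our hypothesis is exactly $W \preceq I$ for that row. By the contrapositive of Theorem \ref{t1}(iii), if $W$ is not NFB then $W \preceq {\bf n}$ for \emph{some} ${\bf n} \in N$, that is, either $W \preceq xytxy$ or $W \preceq xytyx$. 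So at least one of the two relations holds; it remains to upgrade ``one of them'' to ``both of them.''

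For this upgrade step I would invoke Rows 1 and 2. Suppose $W \preceq xytxy$ but $W \not\preceq xytyx$. Row 1 has $I = \{xytxy\}$ and $N = \{xytyx\}$, so $W \preceq I$ and $W \not\preceq {\bf n}$ for the unique ${\bf n} \in N$; Theorem \ref{t1} (Row 1) then forces $W$ to be NFB, contradicting our assumption. Symmetrically, if $W \preceq xytyx$ but $W \not\preceq xytxy$, then Row 2 (with $I = \{xytyx\}$, $N = \{xytxy\}$) applies and again yields that $W$ is NFB, a contradiction. Hence whichever of the two relations we obtained from Row 3 must in fact be accompanied by the other, giving $W \preceq \{xytxy, xytyx\}$ as required. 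Thus $W$ is either NFB or satisfies $W \preceq \{xytxy, xytyx\}$, which is the claimed dichotomy.

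The proof is essentially a bookkeeping argument combining the three instances of Theorem \ref{t1}, so there is no serious obstacle; the only point requiring a little care is making sure the hypotheses of each row are met before invoking it — in particular that $W$ consists of block-2-simple words (given) and that the ``$W \preceq I$'' and ``$W \not\preceq {\bf n}$'' conditions line up with the correct row of the table. One should also note that $W \preceq \{xytxty, xtytxy\}$ does not by itself tell us $W \preceq xytxy$ or $W \preceq xytyx$ — that implication genuinely needs the non-triviality input ``$W$ is not NFB,'' which is why the statement is a dichotomy rather than an unconditional inclusion. Everything else is immediate from the already-established Theorem \ref{t1}(i)--(iii).
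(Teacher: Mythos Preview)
Your argument is correct and is exactly the approach the paper intends: the paper simply states that the corollary follows immediately from Theorem~\ref{t1}(i)--(iii), and your write-up spells out precisely how those three rows combine (Row~3 to obtain one of the two relations, then Rows~1 and~2 to force the other). There is nothing to add or correct.
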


Corollary \ref{fb2} and Theorem \ref{t1}(vi) immediately implies the following.

\begin{cor} \label{fb3}  Let $W$ be a set of block-2-simple words such that $W\preceq \{xytxty, xtytxy, xtxyty\}$.
Then either $W$ is NFB or $W \preceq xyxyx$ or $W \preceq xy^mx$ for some $m>1$.
\end{cor}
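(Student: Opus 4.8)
The plan is to deduce Corollary~\ref{fb3} from Corollary~\ref{fb2} together with Theorem~\ref{t1}(vi) exactly as the statement "immediately implies" suggests, so the work is mostly a matter of organizing the case split correctly. First I would assume $W$ is a set of block-2-simple words with $W \preceq \{xytxty, xtytxy, xtxyty\}$, and in particular $W \preceq \{xytxty, xtytxy\}$. Applying Corollary~\ref{fb2} to $W$, I get that either $W$ is NFB --- in which case we are done --- or $W \preceq \{xytxy, xytyx\}$. So from now on I may assume $W \preceq \{xytxy, xytyx\}$, and since we also have $W \preceq xtxyty$ from the hypothesis, this gives $W \preceq \{xtxyty, xytxy, xytyx\}$, which is precisely the set $I$ appearing in Row~6 of Table~\ref{classes}.

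Next I would bring in Theorem~\ref{t1}(vi), i.e.\ Theorem~\ref{t1} applied to Row~7 --- correction, Row~6 --- where $I = \{xtxyty, xytxy, xytyx\}$ and $N = \{xyxyx\} \cup \{yx^my \mid m>1\}$. Since $W$ is a set of block-2-simple words with $W \preceq I$, Theorem~\ref{t1} says that either $W$ is NFB, or $W \preceq {\bf n}$ fails for \emph{some} ${\bf n} \in N$ is impossible; more precisely, the contrapositive of Theorem~\ref{t1} in this instance reads: if $W$ is not NFB and $W \preceq I$, then $W \preceq {\bf n}$ for some ${\bf n} \in N$, that is, $W \preceq xyxyx$ or $W \preceq yx^my$ for some $m>1$. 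Renaming the two letters, $W \preceq yx^my$ is the same condition as $W \preceq xy^mx$ (it is just the word $xy^mx$ with the roles of $x$ and $y$ interchanged), which matches the phrasing of the corollary. This yields exactly the trichotomy in the statement: $W$ is NFB, or $W \preceq xyxyx$, or $W \preceq xy^mx$ for some $m>1$.

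I would write this up in two short steps: Step 1, invoke Corollary~\ref{fb2} to reduce to the case $W \preceq \{xytxy, xytyx\}$ and combine with $W \preceq xtxyty$ to get $W \preceq I$ for the Row~6 set $I$; Step 2, invoke Theorem~\ref{t1}(vi) and take the contrapositive to conclude. The only genuinely delicate point is purely notational: making sure that the "$yx^my$ for some $m>1$" in Table~\ref{classes} is correctly identified with "$xy^mx$ for some $m>1$" in the statement of Corollary~\ref{fb3}. This is just a renaming of variables and uses that the relation $\preceq$ is invariant under bijective renaming of the alphabet, so I would state that observation in one sentence rather than belabor it. No step here requires any new combinatorial argument about words; all the heavy lifting has already been done inside Theorem~\ref{t1} and Corollary~\ref{fb2}, so there is no real obstacle --- the proof is a two-line composition of the two cited results.
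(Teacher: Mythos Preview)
Your proposal is correct and matches the paper's approach exactly: the paper simply says that Corollary~\ref{fb2} and Theorem~\ref{t1}(vi) immediately imply Corollary~\ref{fb3}, and your two-step argument (reduce via Corollary~\ref{fb2} to $W \preceq \{xytxy, xytyx\}$, then apply the contrapositive of Theorem~\ref{t1} for Row~6) is precisely the intended unpacking of that sentence. The variable-renaming remark about $yx^my$ versus $xy^mx$ is the only thing that needs saying beyond the two citations, and you handle it correctly.
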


\begin{cor} \label{fb1l} Let $W$ be a set of block-2-simple words such that $W \preceq xtxyty$ but one of the words $\{xytxty, xtytxy\}$ is not an isoterm for $S(W)$. Then either $W$ is NFB or $W \preceq xxyy$ and both words $\{xytxty, xtytxy\}$ are not isoterms for $S(W)$.
\end{cor}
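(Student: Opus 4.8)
We may assume that $W$ is not NFB (otherwise there is nothing to prove); the goal is then to show that $W\preceq xxyy$ while neither $xytxty$ nor $xtytxy$ is an isoterm for $S(W)$. The first thing to record is that $xtx$ is an isoterm for $S(W)$: since $W\preceq xtxyty$, the word $xtxyty=(xtx)(yty)$ is an isoterm for $S(W)$, and if ${\bf u}_1{\bf u}_2$ with $\con({\bf u}_1)\cap\con({\bf u}_2)=\emptyset$ is an isoterm for a monoid then so is ${\bf u}_1$ (from a nontrivial identity ${\bf u}_1\approx{\bf w}$, after renaming the variables of ${\bf w}$ outside $\con({\bf u}_1)$ away from $\con({\bf u}_2)$, one obtains the nontrivial identity ${\bf u}_1{\bf u}_2\approx{\bf w}{\bf u}_2$). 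I flag this step because, unlike passing to a subword, deleting a variable from an isoterm need \emph{not} yield an isoterm, so an argument of this kind is genuinely needed.

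The first main step locates $xxyy$. Since $W$ is a set of block-$2$-simple words with $W\preceq xtxyty$ that is not NFB, the contrapositive of Theorem~\ref{t1}(iv) (the fourth row of Table~\ref{classes}, with $I=\{xtxyty\}$ and $N=\{xxyy,xytxy\}$) yields $W\preceq xxyy$ or $W\preceq xytxy$. I claim the second alternative is incompatible with the hypothesis. If $W\preceq xytxy$, then $xytxy$ is an isoterm for $S(W)$; were $xytxty$ not an isoterm for $S(W)$, then --- $xtx$ being an isoterm --- Fact~\ref{xtx}(ii) would give $S(W)\models\sigma_1$, and Lemma~\ref{firstsim1} would then force every adjacent pair of occurrences of distinct non-linear variables in every isoterm of $S(W)$ to be $\sigma_1$-bad, which is impossible because the pair formed by the first occurrences of $x$ and $y$ in $xytxy$ is $\sigma_1$-good. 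Hence $W\preceq xytxty$, and symmetrically (using Fact~\ref{xtx}(iii), $\sigma_2$, and the pair formed by the last occurrences of $x$ and $y$ in $xytxy$) $W\preceq xtytxy$; but then both $xytxty$ and $xtytxy$ are isoterms for $S(W)$, contradicting the hypothesis. Therefore $W\preceq xxyy$.

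The second main step upgrades ``one of $xytxty$, $xtytxy$ is not an isoterm for $S(W)$'' to ``neither is''. Assume $W\not\preceq xytxty$; the case $W\not\preceq xtytxy$ is symmetric under left-right duality, the hypotheses and the conclusion of the corollary being self-dual while $xytxty$ and $xtytxy$ are mutually dual. By Fact~\ref{xtx}(ii) we get $S(W)\models\sigma_1$, and by Lemma~\ref{firstsim1} neither $xytxy$ nor $xytyx$ can be an isoterm for $S(W)$ --- each of them has an adjacent pair of occurrences of distinct non-linear variables (namely the pair of first occurrences of $x$ and $y$) which is $\sigma_1$-good --- so $W\not\preceq xytxy$ and $W\not\preceq xytyx$. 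Suppose, for contradiction, that $W\preceq xtytxy$. Then $W$ is a set of block-$2$-simple words with $W\preceq\{xxyy,xtytxy\}$, $W\not\preceq xytxy$ and $W\not\preceq xytyx$, so the left-right dual of Theorem~\ref{t1}(v) (the fifth row of Table~\ref{classes}, with $xytxty$ replaced by its dual $xtytxy$ and with $xytxy$, $xytyx$ fixed up to renaming) forces $W$ to be NFB, a contradiction. Hence $W\not\preceq xtytxy$ as well, and the argument is complete.

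I expect the second step to be the main obstacle: from the single assumption $W\not\preceq xytxty$ one has to manufacture exactly the two non-isoterm conditions $W\not\preceq xytxy$ and $W\not\preceq xytyx$ that form the hypotheses of Theorem~\ref{t1}(v), and keeping track of $\sigma_1$- versus $\sigma_2$-goodness and of the left-right duality is where the care lies; the preliminary remark that $xtx$ is an isoterm for $S(W)$ is routine but, as noted, cannot be skipped.
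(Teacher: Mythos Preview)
Your proof is correct and follows the same overall skeleton as the paper's: first use Theorem~\ref{t1}(iv) to obtain $W\preceq xxyy$, then use Theorem~\ref{t1}(v) and its dual to conclude that neither $xytxty$ nor $xtytxy$ is an isoterm, with the crucial intermediate step in both places being that neither $xytxy$ nor $xytyx$ is an isoterm for $S(W)$.

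The one real difference is in how that intermediate step is obtained. The paper simply records the order relations $xytxy\preceq\{xytxty,xtytxy\}$ and $xytyx\preceq\{xytxty,xtytxy\}$; since by hypothesis one of $xytxty$, $xtytxy$ is not an isoterm for $S(W)$, it follows at once that neither $xytxy$ nor $xytyx$ is. This single observation, made at the very start, feeds directly into Theorem~\ref{t1}(iv) (immediately excluding the branch $W\preceq xytxy$) and into Theorem~\ref{t1}(v) and its dual, and it also makes your preliminary verification that $xtx$ is an isoterm unnecessary. Your route through Fact~\ref{xtx} and Lemma~\ref{firstsim1} reproves a special case of these order relations from scratch; it works, but it costs you the extra $\sigma_1/\sigma_2$ bookkeeping and the duplication between your first and second steps (you establish $W\not\preceq xytxy$ twice, once under the hypothesis $W\preceq xytxy$ and once under $W\not\preceq xytxty$). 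The paper's version is thus shorter and avoids the case split on which of $xytxty$, $xtytxy$ fails to be an isoterm.
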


\begin{proof} Notice that $xytxy \preceq  \{xytxty, xtytxy\}$ and  $xytyx \preceq  \{xytxty, xtytxy\}$.  Since
one of the words $\{xytxty, xtytxy\}$ is not an isoterm for $S(W)$, neither $xytxy$ nor $xytyx$ is an isoterm for $S(W)$.
Theorem \ref{t1}(iv) implies that either $W$ is NFB or the word $xxyy$ is an isoterm for $S(W)$. Now
Theorem \ref{t1}(v) implies that either $W$ is NFB or the word $xytxty$ is not an isoterm for $S(W)$.
The dual argument shows that either $W$ is NFB or the word $xtytxy$ is not an isoterm for $S(W)$.
\end{proof}

\section{Words with two non-linear variables and long blocks are NFB }

Let $\bf u$ be a word containing the variables $a$ and $b$. Following Definition 2.4 in \cite{MJ1}, we use $\tilde{\bf u}$ to denote the word obtained from $\bf u$ by replacing all maximal subwords of $\bf u$ not containing the variables $a$ or $b$  by linear variables and by replacing all subwords of the form $ab$ by words of the form $atb$, where $t$ is a linear variable.

\begin{lemma} \cite[Theorem 2.7] {MJ1} \label{longNFB} Let ${\bf w}={\bf w}_1a^{\alpha_1}b^{\beta_1}{\bf w}_2a^{\alpha_2}{\bf p}b^{\beta_2}{\bf w}_3$
be a word such that $a$ and $b$ are variables, $\bf p$, ${\bf w}_1$, ${\bf w}_2$ and ${\bf w}_3$ are possibly empty words and $\alpha_1, \alpha_2, \beta_1, \beta_2 >0$
are maximal. If both $\bf w$ and $xytyx$ are isoterms for a monoid $S$ and for each $n>0$ the word
${\bf w}=\tilde {\bf w}_1a^{\alpha_1}[Xn]b^{\beta_1-1}\tilde {\bf w}_2a^{\alpha_2}t[nX]tb^{\beta_2}\tilde {\bf w}_3$
is not an isoterm for $S$, then $S$ is NFB.
\end{lemma}

[Theorem 2.7] in \cite{MJ1} is a modified and generalized version of  Lemma 5.3 in \cite{OS} that we used to show that a long word in two variables is NFB. Now we are going to use  \cite[Theorem 2.7]{MJ1}  to show that a word with a long block in two variables is NFB.

\begin{lemma} \label{abba} Let $\bf U$ be a word such that all variables in $\bf U$ other than $a$ and $b$ are linear. If $\bf U$ contains a subword $a b^{\beta}a$  for  some $ \beta > 1$ then $\bf U$ is NFB.
\end{lemma}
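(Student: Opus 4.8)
The plan is to reduce to Lemma~\ref{MJPhD}. First I would observe that since all variables of $\bf U$ other than $a,b$ are linear, $\bf U$ is automatically block-2-simple, so the corollaries of Section 4 apply; in particular I may assume (otherwise we are done) that $\bf U$ is not NFB, which via Theorem~\ref{t1} forces a good deal of structure. A subword $ab^\beta a$ with $\beta>1$ means $\bf U$ deletes, on $\{a,b\}$, to a word containing $ab\mathbf{P}ba$ (take $\mathbf{P}=b^{\beta-2}$), so by Fact~\ref{abtba}(ii) we get $\{\bf U\}\preceq xytyx$; in particular $xytyx$ is \emph{not} an isoterm for $S(\{\bf U\})$. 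Dually, if $\bf U$ also contained a subword of the form $ba^\alpha b$ with $\alpha>1$ or an occurrence pattern giving $ab\mathbf Q ab$, one pushes toward $xytxy$; the delicate point is to arrange matters so that $xytyx$ \emph{is} an isoterm, which is exactly the hypothesis needed in Lemma~\ref{MJPhD}. So the first real step is a case split: either $xytyx$ is not an isoterm for $S(\{\bf U\})$, in which case I invoke Corollary~\ref{sevenint}(ii) / Theorem~\ref{t1}(ii) (together with the presence of the forbidden subword $ab^\beta a$, which guarantees $\{\bf U\}\not\preceq$ the relevant word of $N$) to conclude $\bf U$ is NFB directly; or $xytyx$ is an isoterm, and we proceed to the main argument.

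In the main case, I would apply Lemma~\ref{MJPhD}. Write $\bf U$ in the form ${\bf w}={\bf w}_1a^{\alpha_1}b^{\beta_1}{\bf w}_2a^{\alpha_2}{\bf p}b^{\beta_2}{\bf w}_3$ demanded by that lemma: this is possible precisely because $\bf U$ contains $ab^\beta a$, which exhibits an occurrence of $a$, then a maximal $b$-block of length $\ge 1$, then (after possibly more material) another occurrence of $a$; more care is needed to see the block $b^{\beta_1}$ and the later $b^{\beta_2}$ can be chosen with $\beta_1,\beta_2>0$ and that there is an $a$-block on each side — here I would use that $b^\beta a$ with $\beta>1$ actually forces the relevant maximal exponents to be positive, and if $\bf U$ happens not to have the full pattern (e.g. $b$ occurs only inside one block) then $\bf U$ is essentially a word in at most two variables with short blocks and one handles it by the already-cited Theorem~5.1 of \cite{OS} or Section~3 results. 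Having fixed the decomposition, I must verify the last hypothesis of Lemma~\ref{MJPhD}: that the modified word $\tilde{\bf w}_1 a^{\alpha_1}[Xn]b^{\beta_1-1}\tilde{\bf w}_2 a^{\alpha_2}t[nX]t b^{\beta_2}\tilde{\bf w}_3$ is not an isoterm for $S(\{\bf U\})$ for every $n>0$. The idea is that this word contains, on suitable variables, a configuration to which one of $\sigma_1,\sigma_\mu,\sigma_2$ or $xtx$ applies and which $S(\{\bf U\})$ does satisfy — concretely, because $\bf U$ has only two non-linear variables $a,b$, any word properly containing a new block structure of the above type will delete, on two of its variables, into a block-2-simple word covered by one of the seven rows of Table~\ref{classes}, so $S(\{\bf U\})$ satisfies the corresponding nontrivial identity and the modified word fails to be an isoterm.

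The step I expect to be the main obstacle is precisely this last verification: checking that the ``$n$-blown-up'' word is never an isoterm for $S(\{\bf U\})$. This requires knowing enough nontrivial identities of $S(\{\bf U\})$, and the cleanest route is to combine the isoterm/non-isoterm dichotomy from the case split (we are assuming $xytyx$ is an isoterm but $\{\bf U\}\preceq xytyx$ came from $ab^\beta a$, which pins down that certain other words — e.g. $xytxy$, or $xxyy$, or $xtxyty$ — are \emph{not} isoterms) with Corollary~\ref{nfbcombinations} to produce the needed identities uniformly in $n$. A subsidiary obstacle is the bookkeeping of degenerate sub-cases — when $\bf U$ does not admit the Lemma~\ref{MJPhD} decomposition, when $\beta_1=1$ or $\beta_2=1$, or when $a$ or $b$ is actually linear in $\bf U$ (so $\bf U$ has at most one non-linear variable and is hereditary finitely based by Corollary~\ref{11words}, contradicting the presence of $ab^\beta a$ with both $a,b$ non-linear, which shows this sub-case is vacuous). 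Once all degenerate branches are dispatched, Lemma~\ref{MJPhD} delivers that $S(\{\bf U\})$, hence $\bf U$, is NFB, completing the proof.
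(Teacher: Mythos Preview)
Your high-level plan (reduce to Lemma~\ref{MJPhD}) is the right one, and matches the paper. But two things go wrong.

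First, you have the direction of $\preceq$ backwards. By Proposition~\ref{pr1}(ii), $\{{\bf U}\}\preceq xytyx$ means precisely that $xytyx$ \emph{is} an isoterm for $S(\{{\bf U}\})$. Since ${\bf U}$ contains $ab^\beta a$ with $\beta>1$, Fact~\ref{abtba}(ii) gives $\{{\bf U}\}\preceq xytyx$ automatically, so the hypothesis of Lemma~\ref{MJPhD} about $xytyx$ is satisfied for free and your case split is unnecessary. (Your citation of Theorem~\ref{t1}(ii) for the branch ``$xytyx$ not an isoterm'' is also backward: that row requires $W\preceq xytyx$.) This confusion is not fatal, but it obscures the real work.

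The genuine gap is your verification that the blown-up word
\[
\tilde{\bf w}_1 a^{\alpha_1}[Xn]b^{\beta_1-1}\tilde{\bf w}_2 a^{\alpha_2}t[nX]t b^{\beta_2}\tilde{\bf w}_3
\]
is not an isoterm for $S(\{{\bf U}\})$. You propose to apply one of $\sigma_1,\sigma_\mu,\sigma_2$ or an identity from Table~\ref{classes}, but $S(\{{\bf U}\})$ need not satisfy any of these: a word like ${\bf U}=ab\,t_1\,abba\,t_2\,ab$ contains $ab^\beta a$ yet has all of $xytyx$, $xytxy$, $xtxyty$ as isoterms, so by Theorem~\ref{firstsim} none of $\sigma_1,\sigma_\mu,\sigma_2$ hold. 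The Table~\ref{classes} identities are likewise only known to hold under specific isoterm/non-isoterm hypotheses that ${\bf U}$ need not meet. So your proposed mechanism produces no identity.

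What the paper does instead is entirely concrete. After first disposing (via Corollary~\ref{fb2}) of the degenerate case where $b$ occurs only inside $ab^\beta a$, one may write ${\bf U}={\bf u}_1 a^{\alpha_1}b^{\beta}a^{\alpha_2}{\bf v}b^{\gamma}{\bf u}_2$ (or the variant with an extra $a$-block), feed this decomposition into Lemma~\ref{MJPhD}, and write down the explicit candidate identity
\[
{\bf u}_n=\tilde{\bf u}_1 x^{\alpha_1}[An]y^{\beta-1}x^{\alpha_2}t_1[nA]t_2y^{\gamma}\tilde{\bf u}_2 \approx \tilde{\bf u}_1 x^{\alpha_1}[An]x^{\alpha_2}y^{\beta-1}t_1[nA]t_2y^{\gamma}\tilde{\bf u}_2={\bf v}_n.
\]
One then checks via Lemma~\ref{lemmaOS} that $S(\{{\bf U}\})\models{\bf u}_n\approx{\bf v}_n$ by a direct counting argument: since ${\bf u}_n$ has as many $x$'s as ${\bf U}$ has $a$'s and one fewer $y$ than ${\bf U}$ has $b$'s, any substitution $\Theta$ with $\Theta(x)\Theta(y)\ne\Theta(y)\Theta(x)$ landing $\Theta({\bf u}_n)$ (or $\Theta({\bf v}_n)$) inside ${\bf U}$ must send $[An]\mapsto\epsilon$ and $\{x,y\}\mapsto\{a,b\}$; a short case analysis on which way the letters are assigned then shows the resulting $\{a,b\}$-pattern cannot be a prefix or suffix of ${\bf U}(a,b)$. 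This bespoke verification, not an appeal to generic identities, is the missing ingredient in your sketch.
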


\begin{proof} If $\bf U$ does not contain any  occurrence of $b$  outside of the word  $a b^{\beta}a$ then $\bf U$ is NFB by Corollary \ref{fb2}.
 So, without  loss of generality we assume that  ${\bf U}={\bf u}_1a^{\alpha_1} b^{\beta}a^{\alpha_2}{\bf v}b^{\gamma}{\bf u}_2$ or ${\bf U}={\bf u}_1a^{\alpha_1} b^{\beta}a^{\alpha_2}{\bf w}a^{\alpha_3}{\bf v}b^{\gamma}{\bf u}_2$.
In both cases, $\alpha_1, \alpha_2, \alpha_3, \gamma >0$, $\alpha_1$ and $\gamma$ are maximal, ${\bf u}_1$, $\bf v$ and ${\bf u}_2$ are possibly empty words such that  if $\bf v$ is not empty then $\bf v$ contains only linear variables. The word $\bf w$ starts and begins with a linear variable and does contain any occurrences of $b$.
Each possibility can be handled by using Lemma \ref{longNFB} in a similar way.

If  ${\bf U}={\bf u}_1a^{\alpha_1} b^{\beta}a^{\alpha_2}{\bf v}b^{\gamma}{\bf u}_2$, then
we use Lemma \ref{longNFB} for ${\bf w}_1={\bf u}_1$, ${\bf w}_2=1$, $\bf p=v$ and  ${\bf w}_3={\bf u}_2$ and show that for each $n>0$ the monoid $S(\{\bf U\})$ satisfies the following identity:
\[{\bf u}_n= \tilde {\bf u}_1 x^{\alpha_1}[An]y^{\beta-1}x^{\alpha_2}t_1[nA]t_2y^{\gamma}  \tilde {\bf u}_2 \approx  \tilde {\bf u}_1 x^{\alpha_1}[An]x^{\alpha_2}y^{\beta-1}t_1[nA]t_2y^{\gamma}  \tilde {\bf u}_2
= {\bf v}_n,\] where ${\bf u}_1$ and ${\bf u}_2$ are written in $x$ and $y$ instead of $a$ and $b$.

If   ${\bf U}={\bf u}_1a^{\alpha_1} b^{\beta}a^{\alpha_2}wa^{\alpha_3}{\bf v}b^{\gamma}{\bf u}_2$, then we use Lemma \ref{longNFB} for ${\bf w}_1={\bf u}_1$, ${\bf w}_2=a^{\alpha_2}\bf w$, $\bf p=v$ and  ${\bf w}_3={\bf u}_2$ and show that for each $n>0$ the monoid $S(\{\bf U\})$ satisfies the following identity:
\[{\bf u}_n= \tilde {\bf u}_1 x^{\alpha_1}[An]y^{\beta-1}x^{\alpha_2}\tilde {\bf w}x^{\alpha_3}t_1[nA]t_2y^{\gamma}  \tilde {\bf u}_2 \approx
 \tilde {\bf u}_1 x^{\alpha_1}[An]x^{\alpha_2}y^{\beta-1} \tilde {\bf w}x^{\alpha_3}t_1[nA]t_2y^{\gamma}  \tilde {\bf u}_2 = {\bf v}_n,\]
 where ${\bf u}_1$ and ${\bf u}_2$ are written in $x$ and $y$ instead of $a$ and $b$.

 Notice that for each $n>0$, $\{x,y\}$ is the only unstable pair of variables in ${\bf u}_n \approx {\bf v}_n$.
 Let $\Theta: \mathfrak A \rightarrow \mathfrak A^*$ be a substitution such that $\Theta(x) \Theta(y) \ne \Theta(y) \Theta(x)$. Then $\Theta(x)$ contains, say, $a$ and $\Theta(y)$ contains $b$ or vice versa.

 Let $m$ denote the total number of occurrences of non-linear variables ($a$ and $b$) in $\bf U$. Notice that $x$ occurs in  ${\bf u}_n$ and ${\bf v}_n$ the same number of times as $a$ in $\bf U$ and the number of occurrences of $y$
 in  ${\bf u}_n$ and ${\bf v}_n$ is one less than the number of occurrences of $b$ in $\bf U$.
If $\Theta([An]) \ne \epsilon$ then  $\Theta({\bf u}_n)$ ($\Theta({\bf v}_n)$, resp.) contains at least $m+1$ occurrences of non-linear letters. Therefore, we can assume that $\Theta([An]) = \Theta([nA])= \epsilon$.

If $\Theta(x)$ contains $a$ then  $\Theta(x)=a$ and  $\Theta(y)=b$.  But then neither $\Theta({\bf u}_n)$ nor $\Theta({\bf v}_n)$  has the word $b^{\beta}$ between $a^{\alpha_1}$
and $a^{\alpha_2}$.

If $\Theta(x)$ contains $b$ then  $\Theta(x)=b$ and  $\Theta(y)=a$. In this case $occ_{\bf U}(a)=occ_{{\bf u}_n}(x)=occ_{{\bf v}_n}(x) \le occ_{\bf U}(b)$ and  $occ_{\bf U}(b)=occ_{{\bf u}_n}(y) +1=occ_{{\bf v}_n}(y) +1 \le occ_{\bf U}(a)+1$. So, either  $occ_{\bf U}(b)=occ_{\bf U}(a)$ or  $occ_{\bf U}(b)=occ_{\bf U}(a)+1$.

 If $occ_{\bf U}(b)=occ_{\bf U}(a)=occ_{{\bf u}_n}(x)=occ_{{\bf v}_n}(x)= occ_{{\bf u}_n}(y) +1=occ_{{\bf v}_n}(y) +1$, then:

(i) The image of no variable other than $x$ contains $b$;

(ii) There is at most one variable $t\ne y$ whose image contains $a$.
If $\Theta(t)$ contains $a$ for some $t \ne y$
then $t$ is linear in ${\bf u}_n$ (in ${\bf v}_n$, resp.) and the variable $a$ occurs only once in $\Theta(t)$.

 If $occ_{\bf U}(b)=occ_{\bf U}(a) +1=occ_{{\bf u}_n}(x) +1 =occ_{{\bf v}_n}(x) +1= occ_{{\bf u}_n}(y) +1=occ_{{\bf v}_n}(y) +1$, then:

(i) The image of no variable other than $y$ contains $a$;

(ii) There is at most one variable $t \ne x$ whose image contains $b$. If $\Theta(t)$ contains $b$ for some $t \ne y$
then $t$ is linear in ${\bf u}_n$ (in ${\bf v}_n$, resp.) and the variable $b$ occurs only once in $\Theta(t)$.

If  $\Theta({\bf u}_n)$ ($\Theta({\bf v}_n)$, resp.) is a subword of $\bf U$ then in view of Conditions (i)-(ii) we have that  $\Theta({\bf u}_n)(a,b)$ ($\Theta({\bf v}_n)(a,b)$, resp.) is a prefix or suffix of ${\bf U}(a,b)$. Since  $\Theta({\bf u}_n)(a,b)$ and ${\bf U}(a,b)$ start and end with different letters, the word  $\Theta({\bf u}_n)(a,b)$ can be neither prefix nor suffix of  ${\bf U}(a,b)$.  Since  $\Theta({\bf v}_n)(a,b)$ and ${\bf U}(a,b)$ start  and begin with different letters, the word  $\Theta({\bf v}_n)(a,b)$ can  be neither prefix nor suffix of  ${\bf U}(a,b)$.

Overall, neither  $\Theta({\bf u}_n)$  nor $\Theta({\bf v}_n)$ is a subword of $\bf U$. By Lemma \ref{lemmaOS}, the monoid $S(\{\bf U\})$ satisfies the identity ${\bf u}_n \approx {\bf v}_n$ for each $n>0$. Therefore, $\bf U$ is NFB by  Lemma \ref{longNFB}.
\end{proof}

\begin{lemma} \label{ababa}  Let $\bf U$ be a word such that all variables in $\bf U$ other than $a$ and $b$ are linear.  If $\bf U$ contains a subword $ababa$  then $\bf U$ is NFB.
\end{lemma}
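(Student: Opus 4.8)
The plan is to mirror the structure of the proof of Lemma~\ref{abba}, replacing the role of that lemma's appeal to Lemma~\ref{MJPhD} (which is geared toward subwords of the form $ab^\beta a$) by an appeal to the dual of Lemma~\ref{MJPhD} together with Row~6 of Table~\ref{classes}. First I would dispose of the degenerate case: if $\bf U$ contains no occurrence of $a$ or $b$ outside the subword $ababa$, then $\bf U$ deletes to $ababa$ and hence $S(\{{\bf U}\})\preceq xyxyx$, while also (since $\tilde{\bf U}$-type considerations or Fact~\ref{abtba} give) $S(\{{\bf U}\})\preceq\{xytxy,xytyx,xtxyty\}$; then Corollary~\ref{fb3} shows $\bf U$ is NFB unless $\bf U\preceq xy^mx$ for some $m>1$, which cannot happen here because $ababa$ forces a word of the form $ab\mathbf{P}ab$ and a word of the form $ab\mathbf{P}ba$ in $\bf U^c$ simultaneously — i.e.\ both $xytxy$ and $xytyx$ sit below $\bf U$, and also $xyxyx\preceq\bf U$. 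More carefully: $ababa$ contains $xytxy$-shaped and $xytyx$-shaped subwords and $xyxyx$ itself, so $\bf U\preceq xyxyx$ and $\bf U\not\preceq yx^my$ for any $m$, hence Corollary~\ref{fb3} already yields that $\bf U$ is NFB. So the real content is when $b$ (or $a$) occurs outside the block.

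Next I would set up the general position exactly as in Lemma~\ref{abba}: by considering which of $a,b$ has extra occurrences outside $ababa$ and on which side, write $\bf U$ in one of a small number of canonical forms, say ${\bf U}={\bf u}_1a^{\alpha_0}ba^{\alpha_1}ba^{\alpha_2}{\bf v}b^\gamma{\bf u}_2$ and the dual, plus the variants where an $a$-block sits between the two internal $b$'s is replaced by something with a linear separator ${\bf w}$ — but in fact since all non-$ab$ material is linear, the block $ababa$ is literally $a^{\alpha_0}b a^{\alpha_1}b a^{\alpha_2}$ with $\alpha_i>0$, and the relevant identity to feed into (the dual of) Lemma~\ref{MJPhD} is built by replacing the first internal $b$ by a growing alternating factor. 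Concretely I expect to construct, for each $n>0$, identities ${\bf u}_n\approx{\bf v}_n$ obtained from $\tilde{\bf U}$ by inserting $[An]$ and $t[nA]t$ around the spots dictated by Lemma~\ref{MJPhD} (applied after reorganizing so that $ababa$ presents as $\ldots a^{\alpha}b^{\beta_1}\ldots a^{\alpha'}\mathbf{p}b^{\beta_2}\ldots$ — here $\beta_1=1$), and then show $S(\{\bf U\})$ satisfies each ${\bf u}_n\approx{\bf v}_n$ via Lemma~\ref{lemmaOS}.

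The verification that $S(\{{\bf U}\})\models{\bf u}_n\approx{\bf v}_n$ is the routine-but-delicate core, and it proceeds by the same bookkeeping as in Lemma~\ref{abba}: the only unstable pair is $\{x,y\}$; for a non-commuting substitution $\Theta$ one of $\Theta(x),\Theta(y)$ contains $a$ and the other $b$; a counting argument on total occurrences of non-linear letters forces $\Theta([An])=\Theta([nA])=\epsilon$ and then forces $\Theta(x),\Theta(y)$ to be single letters; and finally the mismatch of first/last letters between $\Theta({\bf u}_n)(a,b)$, $\Theta({\bf v}_n)(a,b)$ and ${\bf U}(a,b)$ (which now both begin and end with $a$, whereas the images would be forced to be prefixes or suffixes with a $b$ at the wrong end) shows neither image is a subword of $\bf U$. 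One subtlety special to this lemma, and the step I expect to be the main obstacle, is that $ababa$ begins and ends with the \emph{same} variable $a$, so the "different first and last letters" argument used in Lemma~\ref{abba} needs adjustment — I would instead track the parity/number of $b$'s and the position of the unique block $a^{\alpha_1}$ squeezed between two $b$'s, arguing that $\Theta({\bf u}_n)(a,b)$ cannot reproduce the pattern $\ldots b a^{\alpha_1} b\ldots$ with the correct surrounding exponents unless $\Theta$ is a renaming, which is excluded. Once all ${\bf u}_n\approx{\bf v}_n$ hold in $S(\{\bf U\})$, since $xytyx$ is an isoterm for $S(\{\bf U\})$ precisely when $\bf U$ has no $ab\mathbf{P}ba$ subword — and if it does have one we are already done by Corollary~\ref{fb2} — we may assume $xytyx$ is an isoterm, and then (the dual of) Lemma~\ref{MJPhD} delivers that $\bf U$ is NFB.
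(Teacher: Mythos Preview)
Your overall strategy---apply Lemma~\ref{MJPhD} and verify the required identities via Lemma~\ref{lemmaOS} with a counting argument---is the paper's strategy too, but several concrete steps in your plan are wrong, and the paper's execution is both simpler and different from what you sketch.

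First, your ``degenerate case'' argument via Corollary~\ref{fb3} does not work. That corollary says a set $W$ with $W\preceq\{xytxty,xtytxy,xtxyty\}$ is either NFB \emph{or} $W\preceq xyxyx$ \emph{or} $W\preceq xy^mx$. Since $ababa$ is a subword of $\mathbf U$, you have $\mathbf U\preceq xyxyx$, so the corollary is satisfied by the second alternative and yields no NFB conclusion. You also invert Fact~\ref{abtba}(ii): $xytyx$ is an isoterm for $S(\{\mathbf U\})$ exactly when $\mathbf U^c$ \emph{does} contain a word $ab\mathbf Pba$---which it always does here, since $ababa$ contains $ab\cdot a\cdot ba$. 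So the hypothesis of Lemma~\ref{MJPhD} that $xytyx$ be an isoterm is automatically met; there is no side case to dispose of via Corollary~\ref{fb2}.

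Second, there is no need for the dual of Lemma~\ref{MJPhD} or for Row~6 of Table~\ref{classes}. The paper applies Lemma~\ref{MJPhD} directly with the decomposition $\mathbf U=\mathbf u_1 a^p\,b\,a\,b\,a^q\mathbf u_2$ (so $\alpha_1=p$, $\beta_1=1$, $\mathbf w_2=\mathbf p=\epsilon$, $\alpha_2=1$, $\beta_2=1$, $\mathbf w_3=a^q\mathbf u_2$), and the identity to falsify the isoterm condition is
\[
\mathbf u_n=\tilde{\mathbf u}_1\,x^p[An]\,x\,t_1[nA]t_2\,y\,x^q\,\tilde{\mathbf u}_2
\;\approx\;
\tilde{\mathbf u}_1\,x^p[An]\,x\,t_1[nA]t_2\,x^q\,y\,\tilde{\mathbf u}_2=\mathbf v_n,
\]
i.e.\ one swaps $yx^q\leftrightarrow x^qy$. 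The verification splits on whether $b$ occurs exactly twice in $\mathbf U$ (then $\Theta(x)=a$ is forced by multiplicity and the missing $b$ between $a^p$ and $a$ kills both images) or at least three times (then the counting/prefix--suffix argument you anticipated applies). Your worry about ``same first and last letter'' is handled in the paper not by a parity trick but by this case split; in Case~2 the prefix/suffix argument goes through with only a small extra check when $\mathbf u_2$ contains no non-linear variables, where one compares the tails $b^{p+1+q}a$ versus $aba^q$.
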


\begin{proof} We have that ${\bf U}= {\bf u}_1a^pba ba^q {\bf u}_2$  for some possibly empty words ${\bf u}_1$ and ${\bf u}_2$ so that $p,q>0$ are maximal.
We use Lemma \ref{longNFB} for ${\bf w}_1={\bf u}_1$, ${\bf w}_2={\bf p}=\epsilon$ and  ${\bf w}_3=a^q{\bf u}_2$.

 Let us check that for each $n>0$ the monoid $S(\{\bf U\})$ satisfies the following identity:
\[{\bf u}_n= \tilde {\bf u}_1 x^p[An]xt_1[nA]t_2yx^q  \tilde {\bf u}_2 \approx  \tilde {\bf u}_1 x^p[An]xt_1[nA]t_2x^qy  \tilde {\bf u}_2 = {\bf v}_n,\]
 where ${\bf u}_1$ and ${\bf u}_2$ are written in $x$ and $y$ instead of $a$ and $b$.

 Notice that for each $n>0$, $\{x,y\}$ is the only unstable pair of variables in ${\bf u}_n \approx {\bf v}_n$.
 Let $\Theta: \mathfrak A \rightarrow \mathfrak A^*$ be a substitution such that $\Theta(x) \Theta(y) \ne \Theta(y) \Theta(x)$. Then $\Theta(x)$ contains, say, $a$ and $\Theta(y)$ contains $b$ or vice versa.

{\bf Case 1}: Variable $b$ occurs twice in $\bf U$.

Notice that variable $a$ occurs $m \ge 3$ times in $\bf U$. Since variable $x$ occurs $m$ times  in  ${\bf u}_n$ and ${\bf v}_n$, we have that $\Theta(x)=a$ and $occ_{\bf U}(a)=occ_{\Theta({\bf u}_n)}(a)=occ_{\Theta({\bf v}_n)}(a)$.
If $\Theta([An]) \ne \epsilon$ then  $\Theta([An])$ and $\Theta([nA])$ must contain $b$ and $\Theta({\bf u}_n)$ ($\Theta({\bf v}_n)$, resp.) would contain at least three $b$-s. Therefore, $\Theta([An]) = \Theta([nA])= \epsilon$. But then neither $\Theta({\bf u}_n)$ nor $\Theta({\bf v}_n)$  has a $b$ between the displayed occurrences of $a^p$ and $a$. Therefore, neither $\Theta({\bf u}_n)$ nor $\Theta({\bf v}_n)$ is a subword of $\bf U$.

{\bf Case 2}: Variable $b$ occurs at least three times in $\bf U$.

 Let $m$ denote the total number of occurrences of non-linear variables ($a$ and $b$) in $\bf U$. Notice that $x$ occurs in  ${\bf u}_n$ and ${\bf v}_n$ the same number of times as $a$ in $\bf U$ and the number of occurrences of $y$ in  ${\bf u}_n$ and ${\bf v}_n$ is one less than the number of occurrences of $b$ in $\bf U$.
If $\Theta([An]) \ne \epsilon$ then  $\Theta({\bf u}_n)$ ($\Theta({\bf v}_n)$, resp.) contains at least $m+1$ occurrences of non-linear variables. Therefore, we can assume that $\Theta([An]) = \Theta([nA])= \epsilon$.

If $\Theta(x)$ contains $a$ then  $\Theta(x)=a$ and $occ_{\bf U}(a)=occ_{\Theta({\bf u}_n)}(a)=occ_{\Theta({\bf v}_n)}(a)$. In this case, neither $\Theta({\bf u}_n)$ nor $\Theta({\bf v}_n)$  has a $b$ between the displayed occurrences of $a^p$ and $a$. Therefore, neither $\Theta({\bf u}_n)$ nor $\Theta({\bf v}_n)$ is a subword of $\bf U$.

If $\Theta(x)$ contains $b$ then  $\Theta(x)=b$ and  $\Theta(y)=a$. In this case $occ_{\bf U}(a)=occ_{{\bf u}_n}(x)=occ_{{\bf v}_n}(x) \le occ_{\bf U}(b)$ and  $occ_{\bf U}(b)=occ_{{\bf u}_n}(y) +1=occ_{{\bf v}_n}(y) +1 \le occ_{\bf U}(a)+1$. So, either  $occ_{\bf U}(b)=occ_{\bf U}(a)$ or  $occ_{\bf U}(b)=occ_{\bf U}(a)+1$.

 If $occ_{\bf U}(b)=occ_{\bf U}(a)=occ_{{\bf u}_n}(x)=occ_{{\bf v}_n}(x)= occ_{{\bf u}_n}(y) +1=occ_{{\bf v}_n}(y) +1$, then:

(i) The image of no variable other than $x$ contains $b$;

(ii) There is at most one variable $t\ne y$ whose image contains $a$.
If $\Theta(t)$ contains $a$ for some $t \ne y$
then $t$ is linear in ${\bf u}_n$ (in ${\bf v}_n$, resp.) and the variable $a$ occurs only once in $\Theta(t)$.

 If $occ_{\bf U}(b)=occ_{\bf U}(a) +1=occ_{{\bf u}_n}(x) +1 =occ_{{\bf v}_n}(x) +1= occ_{{\bf u}_n}(y) +1=occ_{{\bf v}_n}(y) +1$, then:

(i) The image of no other variable than $y$ contains $a$;

(ii) There is at most one variable $t \ne x$ whose image contains $b$. If $\Theta(t)$ contains $b$ for some $t \ne y$
then $t$ is linear in ${\bf u}_n$ (in ${\bf v}_n$, resp.) and the variable $b$ occurs only once in $\Theta(t)$.

If  $\Theta({\bf u}_n)$ ($\Theta({\bf v}_n)$, resp.) is a subword of $\bf U$ then in view of Conditions (i)-(ii) we have that  $\Theta({\bf u}_n)(a,b)$ ($\Theta({\bf v}_n)(a,b)$, resp) is a prefix or a suffix of ${\bf U}(a,b)$. Since  $\Theta({\bf u}_n)(a,b)$ and ${\bf U}(a,b)$ start and end with different variables, the word  $\Theta({\bf u}_n)(a,b)$ is neither a prefix nor a suffix of  ${\bf U}(a,b)$.  Since  $\Theta({\bf v}_n)(a,b)$ and ${\bf U}(a,b)$ start with different variables, the word  $\Theta({\bf u}_n)(a,b)$ is not a prefix of  ${\bf U}(a,b)$. If the word ${\bf u}_2$ contains some non-linear variables, then the words
 $\Theta({\bf v}_n)(a,b)$ and ${\bf U}(a,b)$ end with different variables and consequently,  $\Theta({\bf v}_n)(a,b)$ is not a suffix of  ${\bf U}(a,b)$.
If the word ${\bf u}_2$ does not contain any linear variables, then the word  $\Theta({\bf v}_n)(a,b)$ ends with $b^{p+1+q}a$ but the word
  ${\bf U}(a,b)$  ends with  $aba^q$ and consequently, the word  $\Theta({\bf v}_n)(a,b)$ is not a suffix of ${\bf U}(a,b)$.

Overall, neither  $\Theta({\bf u}_n)$  nor $\Theta({\bf v}_n)$ is a subword of $\bf U$. By Lemma \ref{lemmaOS}, the monoid $S(\{\bf U\})$ satisfies the identity ${\bf u}_n \approx {\bf v}_n$ for each $n>0$. Therefore, $\bf U$ is NFB by  Lemma \ref{longNFB}.
\end{proof}

\section{Some finitely based words which are not hereditary finitely based}

\begin{lemma} \label{GL} \cite[Corollary 5.3, Sect.11]{GL}  Two words in a free monoid commute if and only if they are powers of the same word.
\end{lemma}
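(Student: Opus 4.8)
The statement is classical --- it is the word-combinatorial core of the Lyndon--Schützenberger circle of results --- and since it is quoted from \cite{GL} I will only indicate the standard argument. The nontrivial implication is that $uv=vu$ forces $u$ and $v$ to be powers of a common word, and the plan is to prove it by induction on $|u|+|v|$, using the cancellativity of the free monoid; in effect one runs the Euclidean algorithm on words. The converse implication is immediate, since $w^mw^n=w^{m+n}=w^nw^m$, so powers of a common $w$ always commute.

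For the base cases: if $u=\epsilon$ take $w=v$, $m=0$, $n=1$ (and if in addition $v=\epsilon$ take $w=\epsilon$); and if $|u|=|v|$, then comparing the length-$|u|$ prefixes of the two sides of $uv=vu$ yields $u=v$, so $w=u$ works. For the inductive step assume $0<|u|<|v|$ (the case $|v|<|u|$ is symmetric, and $|u|=|v|$ is already handled). Reading off prefixes of the equal words $uv$ and $vu$, the word $u$ is a prefix of $vu$; since $|u|<|v|$, $u$ is in fact a proper prefix of $v$, so $v=uv'$ with $0<|v'|=|v|-|u|$. Substituting into $uv=vu$ gives $u(uv')=(uv')u$, and left-cancelling $u$ gives $uv'=v'u$. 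Now $|u|+|v'|<|u|+|v|$, so by the induction hypothesis there are a word $w$ and integers $m,k\ge 0$ with $u=w^m$ and $v'=w^k$; then $v=uv'=w^{m+k}$, and the claim holds with $n=m+k$.

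The only points requiring care are that the free monoid is both left- and right-cancellative and that equal words have equal prefixes of every given length --- both immediate from the unique factorization of elements of $\mathfrak A^*$ into letters --- so there is no genuine obstacle; the induction terminates because $|u|+|v|$ strictly decreases at each step. Alternatively, one could deduce the lemma from the Fine--Wilf periodicity theorem applied to a sufficiently long prefix of the one-sided infinite word $uvuv\cdots=vuvu\cdots$, which simultaneously has periods $|u|$ and $|v|$ and hence period $\gcd(|u|,|v|)$; but the Euclidean-style induction above is self-contained and is the version I would include.
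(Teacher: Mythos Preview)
Your argument is correct: the length-induction (Euclidean-algorithm) proof you give is the standard one, and every step is justified. Note, however, that the paper does not actually supply a proof of this lemma at all --- it is simply quoted as \cite[Corollary 5.3, Sect.~11]{GL} from Lallement's book and then used as a black box in the proof of Theorem~\ref{com}. So there is nothing in the paper to compare your approach against; your self-contained argument is exactly the sort of proof one would expect to find behind the citation, and indeed the remark preceding Theorem~\ref{com} hints that the inductive route via \cite[Lemma~5.1, Sect.~11]{GL} is what the author has in mind for the surrounding material.
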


 The following theorem can be proved by using induction on the maximal length of $\Theta(x)$ and $\Theta(y)$ and Lemma 5.1 in Sect.11 in \cite{GL}. Victor Guba noticed that the word mentioned in Theorem \ref{com} is a generator of the cyclic group $G$ generated by $\Theta(x)$ and $\Theta(y)$ and
 suggested to prove this theorem in a similar way as Lyndon and Schupp proved Proposition 2.17 in \cite{LS}. This computation-free proof is the one that we present.

\begin{theorem} \label{com} Let ${\bf u} \approx {\bf v}$ be a non-trivial identity in two variables $x$ and $y$ and $\Theta: \mathfrak A \rightarrow \mathfrak A ^*$ be a substitution.  If $\Theta({\bf u})=\Theta({\bf v})$ then both $\Theta(x)$ and $\Theta(y)$ are powers of the same word.
\end{theorem}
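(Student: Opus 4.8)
The plan is to work entirely inside the free group generated by $\Theta(x)$ and $\Theta(y)$ and to reduce the statement to the classical fact (Lemma \ref{GL}) that two elements of a free monoid commute iff they are powers of a common word. Write $p = \Theta(x)$ and $q = \Theta(y)$, both elements of $\mathfrak A^*$. If one of $p, q$ is empty there is nothing to prove, so assume both are nonempty. The identity ${\bf u} \approx {\bf v}$ is nontrivial, so ${\bf u}$ and ${\bf v}$ are distinct words in $x$ and $y$; since $\Theta({\bf u}) = \Theta({\bf v})$, reading the two sides from the left and cancelling the common prefix, we obtain an equation in $\mathfrak A^*$ of the form $p\,w_1 = q\,w_2$ (or $q\,w_1 = p\,w_2$), where $w_1, w_2$ are further products of $p$'s and $q$'s. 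The aim is to show that from such overlap relations one can force $p$ and $q$ to commute, after which Lemma \ref{GL} finishes the proof.

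The approach I would take, following the Lyndon--Schupp style suggested in the remark preceding the statement, is to pass to the subgroup $G$ of the free group $F(\mathfrak A)$ generated by $p$ and $q$. The equality $\Theta({\bf u}) = \Theta({\bf v})$ says that the group word ${\bf u}{\bf v}^{-1}$ in $x, y$ maps to $1$ under $x \mapsto p$, $y \mapsto q$; since ${\bf u} \ne {\bf v}$ as (positive) words, ${\bf u}{\bf v}^{-1}$ is a nonempty freely reduced word in $x, y$ (after free reduction it could in principle collapse, but because ${\bf u}, {\bf v}$ are positive words of possibly different lengths, the reduced form of ${\bf u}{\bf v}^{-1}$ is a nontrivial element whose exponent sum structure is nondegenerate). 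Hence $G$ is a two-generator subgroup of a free group in which a specific nontrivial relation among the generators becomes trivial; by the Nielsen--Schreier theorem $G$ is free, and a free group on two free generators admits no nontrivial relation, so $p$ and $q$ cannot be free generators of $G$. Therefore $G$ is cyclic, i.e. there is a single element $g \in F(\mathfrak A)$ with $p = g^a$ and $q = g^b$ for integers $a, b$.

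It remains to descend from the free group back to the free monoid: I would argue that since $p$ and $q$ are both \emph{positive} words and are powers of a common group element $g$, one can replace $g$ by a positive word. Concretely, $pq = g^{a+b}$ and $qp = g^{b+a}$ are equal in $F(\mathfrak A)$, hence equal as freely reduced words, hence $pq = qp$ already as elements of $\mathfrak A^*$ (two positive words equal in the free group are equal as words). Now Lemma \ref{GL} applies directly: $p$ and $q$ commute in the free monoid, so they are powers of the same word, which is exactly the assertion that $\Theta(x)$ and $\Theta(y)$ are powers of a common word.

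The main obstacle I anticipate is the bookkeeping at the two transition points: first, verifying cleanly that ${\bf u}{\bf v}^{-1}$ really is a nontrivial element of the free group on $\{x,y\}$ — one must rule out the degenerate possibility that after free reduction it becomes trivial, which cannot happen because ${\bf u}$ and ${\bf v}$ are distinct \emph{positive} words and so their images under any homomorphism that is injective on the generating set stay distinct; and second, the passage from ``equal in the free group'' to ``equal in the free monoid,'' which is the standard but essential observation that the free monoid embeds in the free group. Once those two points are handled, the core of the argument is just Nielsen--Schreier plus Lemma \ref{GL}, and the induction on $\max(|p|,|q|)$ mentioned in the remark becomes unnecessary. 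If one prefers to avoid Nielsen--Schreier, the alternative is the inductive overlap argument: from $p\,w_1 = q\,w_2$ with, say, $|p| \le |q|$, one gets $q = p\,r$ for some word $r$, substitute back to shorten, and induct on $|p| + |q|$ until one of the words becomes empty, at which point commutativity is immediate and Lemma \ref{GL} again closes the argument; this is the more elementary route and I would include it as the fallback.
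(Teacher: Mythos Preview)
Your approach is essentially identical to the paper's: pass to the subgroup $G$ of the free group generated by $\Theta(x)$ and $\Theta(y)$, use Nielsen--Schreier to see $G$ is free, observe that the nontrivial relation $\Theta({\bf u})=\Theta({\bf v})$ forces $G$ to be cyclic, and then apply Lemma~\ref{GL}. The paper makes the step ``$p,q$ not a free basis $\Rightarrow$ $G$ cyclic'' explicit by citing Proposition~2.7 of \cite{LS} (a two-generated free group of rank two has those generators as a basis), which you use implicitly; otherwise the arguments coincide, and your extra remarks on the free-monoid/free-group embedding and the inductive fallback are correct but not needed.
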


\begin{proof} Consider the free group $F = \langle \{x_1, \dots, x_n, x'_1, \dots, x'_n \} \mid x_ix_i' = x_i'x_i = \epsilon, 1 \le i \le n \rangle$ where
$\{x_1, \dots, x_n\} = \con(\Theta(xy))$.
By the Nielsen-Schreier Theorem \cite{Sch}, the subgroup $G$ of $F$ generated by $\Theta(x)$ and $\Theta(y)$ is itself free.
By Proposition 2.7 in \cite{LS}, the group $G$ is of rank at most two and if G has rank two, then $\Theta(x)$ and $\Theta(y)$ must form a basis for $G$. But in this case $\Theta({\bf u}) \ne \Theta({\bf v})$. So, the group $G$ is cyclic and consequently, $\Theta(x)$ and $\Theta(y)$ commute. Therefore, by Lemma \ref{GL} they are powers of the same word.
\end{proof}

\begin{cor} \label{subSW2} Let $W$ be a set of words and ${\bf u}_2(x,y)\approx {\bf v}_2(x,y)$ be a non-trivial balanced identity in two variables $x$ and $y$.
Suppose also that for some possibly empty balanced identities ${\bf u}_1\approx {\bf v}_1$ and ${\bf u}_3\approx {\bf v}_3$, $\{x,y\}$ is the only unstable pair of variables in ${\bf u} = {\bf u}_1 {\bf u}_2(x,y){\bf u}_3 \approx {\bf u}_1{\bf v}_2(x,y){\bf v}_3 = {\bf v}$. Then $S(W) \models {\bf u} \approx {\bf v}$ if and only if for every substitution $\Theta: \mathfrak A \rightarrow \mathfrak A^*$ such that $\Theta(y) \Theta(x) \ne \Theta(x) \Theta(y)$, neither $\Theta({\bf u})$ nor $\Theta({\bf v})$ belongs to $W^{\le}$.
\end{cor}

 \begin{proof} The `if' part follows immediately from Lemma \ref{lemmaOS}.

Suppose that $S(W) \models {\bf u} \approx \bf v$. Let $\Theta: \mathfrak A \rightarrow \mathfrak A^*$ be a substitution
 such that $\Theta({\bf u}) \in W^{\le}$.
 Then $\Theta({\bf u}) =\Theta({\bf v})$, and consequently, $\Theta({\bf u}_2(x,y)) = \Theta({\bf v}_2(x,y))$. Theorem \ref{com} implies that $\Theta(y) \Theta(x) = \Theta(x) \Theta(y)$.
\end{proof}

We use $\lin({\bf u})$ to denote the set of all linear variables in a word $\bf u$.
An identity ${\bf u} \approx {\bf v}$ is called {\em block-balanced} if for each variable $x \in \mathfrak A$, we have ${\bf u}(x, \lin({\bf u})) = {\bf v}(x, \lin({\bf u}))$. Evidently, an identity ${\bf u} \approx {\bf v}$ is block-balanced if and only if it is balanced, the order of linear variables is the same in $\bf u$ and $\bf v$ and each block in $\bf u$ is a permutation of the corresponding block in $\bf v$. Corollary \ref{subSW2} immediately imply the following.

\begin{cor} \label{subSW1} Let $W$ be a set of words and $\bf u \approx \bf v$ be a non-trivial block-balanced identity with exactly two non-linear variables $x \ne y$. Then $S(W) \models \bf u \approx \bf v$ if and only if for every substitution $\Theta: \mathfrak A \rightarrow \mathfrak A^*$ such that $\Theta(y) \Theta(x) \ne \Theta(x) \Theta(y)$, neither $\Theta({\bf u})$ nor $\Theta({\bf v})$ belongs to $W^{\le}$.
\end{cor}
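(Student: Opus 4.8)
The plan is to deduce Corollary~\ref{subSW1} from Lemma~\ref{lemmaOS} and Theorem~\ref{com} in a mostly formal way. One direction is essentially immediate: if there is a substitution $\Theta$ with $\Theta(y)\Theta(x)\ne\Theta(x)\Theta(y)$ and, say, $\Theta({\bf u})\in W^c$, then since $\bf u\approx\bf v$ is balanced the pair $\{x,y\}$ is the only candidate for an unstable pair, and in fact it must be unstable because the identity is non-trivial and block-balanced in two non-linear variables (if $\{x,y\}$ were stable the identity would be trivial). Applying the identity $\Theta({\bf u})\approx\Theta({\bf v})$ to the monoid $S(W)$ would turn the non-zero element $\Theta({\bf u})\in W^c$ into $\Theta({\bf v})$; one then checks $\Theta({\bf u})\ne\Theta({\bf v})$ using Theorem~\ref{com}: since $\Theta(x)$ and $\Theta(y)$ do not commute, they are not powers of a common word, so $\Theta({\bf u})\ne\Theta({\bf v})$. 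Hence $S(W)\not\models\bf u\approx\bf v$, giving the contrapositive of the ``if'' direction.

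For the converse I would invoke Lemma~\ref{lemmaOS} directly. Suppose that for every substitution $\Theta$ with $\Theta(y)\Theta(x)\ne\Theta(x)\Theta(y)$, neither $\Theta({\bf u})$ nor $\Theta({\bf v})$ lies in $W^c$. Since $\bf u\approx\bf v$ is balanced, Lemma~\ref{lemmaOS} applies once we verify its hypothesis: for every \emph{pair of variables} $\{p,q\}$ unstable in $\bf u\approx\bf v$ and every substitution $\Theta$ with $\Theta(q)\Theta(p)\ne\Theta(p)\Theta(q)$, neither $\Theta({\bf u})$ nor $\Theta({\bf v})$ is in $W^c$. The only unstable pair is $\{x,y\}$ (every other variable is linear, hence stable, and any pair of stable variables is stable), so the hypothesis of Lemma~\ref{lemmaOS} is exactly our assumption, and we conclude $S(W)\models\bf u\approx\bf v$.

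The one genuine point requiring care — and the step I expect to be the main obstacle — is the clean justification that, for a non-trivial block-balanced identity with exactly two non-linear variables $x\ne y$, the \emph{only} pair of variables that can be unstable is $\{x,y\}$, and moreover it really is unstable. The block-balanced condition forces every block of $\bf u$ to be a permutation of the corresponding block of $\bf v$ with the linear scaffolding fixed; if within every block $x$ and $y$ occurred in the same relative order, then $\bf u$ and $\bf v$ would be letter-for-letter equal, contradicting non-triviality. So some block has $x$ and $y$ transposed, which is precisely the assertion that $\{x,y\}$ is unstable in the sense of Definition preceding Lemma~\ref{lemmaOS} (note ``unstable pair'' there means the ordered deletion ${\bf u}(x,y)\ne{\bf v}(x,y)$, which is implied). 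All other variables are linear and therefore individually stable, and deleting to any set of linear variables gives the same word on both sides, so no pair involving only such variables is unstable. With this bookkeeping pinned down, both directions go through, and I would write the proof in two short paragraphs: ``$(\Leftarrow)$'' citing Lemma~\ref{lemmaOS}, and ``$(\Rightarrow)$'' by contraposition citing Theorem~\ref{com} to guarantee $\Theta({\bf u})\ne\Theta({\bf v})$.
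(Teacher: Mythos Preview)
Your approach is the same as the paper's: $(\Leftarrow)$ is Lemma~\ref{lemmaOS} once you note that $\{x,y\}$ is the only unstable pair, and $(\Rightarrow)$ is the contrapositive, using Theorem~\ref{com} to force $\Theta({\bf u})\ne\Theta({\bf v})$.

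There is one step that needs tightening. Theorem~\ref{com} is stated for identities \emph{in two variables}, whereas ${\bf u}\approx{\bf v}$ carries linear variables as well, so you cannot apply it directly to conclude $\Theta({\bf u})\ne\Theta({\bf v})$. The paper inserts exactly this missing reduction: assuming $\Theta({\bf u})=\Theta({\bf v})$, block-balancedness says corresponding blocks ${\bf B},{\bf B}'$ are permutations of one another, hence $|\Theta({\bf B})|=|\Theta({\bf B}')|$, and since the linear scaffolding is identical on both sides the decompositions line up to give $\Theta({\bf B})=\Theta({\bf B}')$ for every pair of corresponding blocks. Non-triviality supplies a pair with ${\bf B}\ne{\bf B}'$; this is now a genuine two-variable identity ${\bf B}\approx{\bf B}'$ to which Theorem~\ref{com} applies, yielding the desired contradiction with $\Theta(x)\Theta(y)\ne\Theta(y)\Theta(x)$. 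You already articulate the block picture in your last paragraph, so this is just a matter of moving that reasoning into the $(\Rightarrow)$ argument where it is actually needed.
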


 We say that a pair of variables $\{x,y\}$ is {\em b-unstable} in a word $\bf u$ with respect to a semigroup $S$ if $S$ satisfies a block-balanced identity of the form $\bf u \approx \bf v$ such that ${\bf u}(x,y) \ne {\bf v}(x,y)$. Otherwise, we say that $\{x,y\}$ is {\em b-stable} in $\bf u$ with respect to $S$.
Corollary \ref{subSW1} immediately imply the following.

\begin{cor} \label{subSW} Let $W$ be a set of words and $\bf u$ be a word with exactly two non-linear variables $x$ and $y$.
Suppose that one can find a substitution $\Theta: \mathfrak A \rightarrow \mathfrak A^*$ such that $\Theta(y) \Theta(x) \ne \Theta(x) \Theta(y)$ and
$\Theta({\bf u}) \in W^{\le}$. Then the pair $\{x,y\}$ is b-stable in $\bf u$ with respect to $S(W)$.
\end{cor}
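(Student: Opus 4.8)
The plan is to prove the contrapositive: assuming that $\{x,y\}$ is b-unstable in $\bf u$ with respect to $S(W)$, I will produce a contradiction with the hypothesis that some substitution $\Theta$ with $\Theta(x)\Theta(y)\ne\Theta(y)\Theta(x)$ sends $\bf u$ into $W^c$. By the definition of b-instability, there is a block-balanced identity ${\bf u}\approx{\bf v}$ satisfied by $S(W)$ with ${\bf u}(x,y)\ne{\bf v}(x,y)$; in particular this identity is non-trivial, and since $\bf u$ has exactly the two non-linear variables $x$ and $y$, it is a block-balanced identity in two non-linear variables, exactly the setting of Corollary \ref{subSW1}.

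The first step is to invoke Corollary \ref{subSW1} in the direction ``$S(W)\models{\bf u}\approx{\bf v}$ implies that for every substitution $\Theta$ with $\Theta(y)\Theta(x)\ne\Theta(x)\Theta(y)$, neither $\Theta({\bf u})$ nor $\Theta({\bf v})$ lies in $W^c$.'' Applied to the very substitution $\Theta$ given in the hypothesis of the corollary, this immediately yields $\Theta({\bf u})\notin W^c$, contradicting the assumption $\Theta({\bf u})\in W^c$. Hence $\{x,y\}$ must be b-stable in $\bf u$ with respect to $S(W)$.

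There is essentially no obstacle here: the statement is a direct logical consequence of Corollary \ref{subSW1} together with the definitions of ``b-stable'' and ``b-unstable''. The only point requiring a line of care is to check that a block-balanced identity ${\bf u}\approx{\bf v}$ witnessing b-instability really is \emph{non-trivial} — but this is immediate, since ${\bf u}(x,y)\ne{\bf v}(x,y)$ forces ${\bf u}\ne{\bf v}$ — and that $\bf v$ has the same two non-linear variables as $\bf u$, which follows because a block-balanced (hence balanced) identity preserves the number of occurrences of every variable, so $\con({\bf v})=\con({\bf u})$ and the linear/non-linear status of each variable is unchanged. With these observations, Corollary \ref{subSW1} applies verbatim and the argument closes.
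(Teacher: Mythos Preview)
Your proposal is correct and matches the paper's approach exactly: the paper states this corollary with no proof, treating it as an immediate consequence of Corollary~\ref{subSW1}, and your argument simply spells out that immediate deduction (including the small checks that the witnessing identity is non-trivial and still has $x,y$ as its two non-linear variables).
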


\begin{lemma} \label{fbtlem} \cite[Theorem 4.12]{OS2} Let $S$ be a monoid such that for some $m > 1$, the word $x^my^m$ is an isoterm for $S$ and
$S \models \{\sigma_1, \sigma_{2}, x^{m}txyty \approx  x^{m}tyxty\}$. Suppose also that for each $1< k \le m$, $S$ satisfies each of the following dual conditions:

(i) If for some almost-linear word ${\bf A}x$ with $occ_{\bf A}(x)>0$ the pair $\{x,y\}$ is b-unstable in ${\bf A}xy^k$ with respect to $S$ then $S$ satisfies the identity ${\bf A}xy^{k-1}ty \approx {\bf A}yxy^{k-2}ty$;

(ii) If for some almost-linear word $y{\bf B}$  with $occ_{\bf B}(y)>0$ the pair $\{x,y\}$ is b-unstable in $x^ky{\bf B}$ with respect to $S$ then $S$ satisfies the identity $xtx^{k-1}y{\bf B} \approx  xtx^{k-2}yx{\bf B}$.

 Then $S$ is finitely based.
\end{lemma}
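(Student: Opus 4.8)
The plan is to exhibit an explicit finite basis $\Sigma_0$ for $\var S$ and show that every identity of $S$ is a consequence of $\Sigma_0$ (this is the route taken in \cite{OS2}). Into $\Sigma_0$ one puts the hypothesised identities $\sigma_1$, $\sigma_2$ and $x^m txyty \approx x^m tyxty$, finitely many instances of the identities produced by (i) and (ii) for $1<k\le m$, and a fixed finite family of ``reduction'' identities: consequences of $\sigma_1,\sigma_2$ that permit permuting occurrences of non-linear variables that are neither first nor last inside their block (cf.\ Definition \ref{goodfact}), together with identities normalising multiplicities larger than $m$. A preliminary observation, used throughout, is that since $x^m y^m$ is an isoterm for $S$, so are $xy$ and every power $x^k$ with $1\le k\le m$; hence every identity of $S$ is balanced and preserves the left-to-right order of its linear variables.

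The first stage is to reduce, modulo $\Sigma_0$, an arbitrary identity ${\bf u}\approx{\bf v}$ of $S$ to a block-balanced one. Balancedness and the order of linear variables are already forced by the isoterms above; applying the $\sigma_1,\sigma_2$-consequences in $\Sigma_0$ one then shows that each block of ${\bf u}$ and the corresponding block of ${\bf v}$ have the same content, so the identity becomes block-balanced, while occurrences of multiplicity exceeding $m$ are absorbed by the power-normalising identities.

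The core stage is to derive every block-balanced identity of $S$ from $\Sigma_0$. Since $S$ is a monoid, $S$ satisfies all identities obtained by deleting non-linear variables, and $\Sigma_0$ is closed under the corresponding deletions; so it suffices to treat block-balanced identities ${\bf u}\approx{\bf v}$ with exactly two non-linear variables $x,y$ and then reassemble. For such an identity, Corollaries \ref{subSW1} and \ref{subSW} together with Theorem \ref{com} translate ``$\{x,y\}$ is $b$-unstable in a given block'' into the existence of an honest rearrangement of that block; one then rewrites ${\bf u}$ into ${\bf v}$ block by block, using $\sigma_1$ and $\sigma_2$ for interior occurrences, the identity $x^m txyty\approx x^m tyxty$ when a block carries $m$ copies of one variable, and the identities from (i), (ii) to slide the remaining occurrences against the block boundary (the trailing $ty$ in (i) and the head $xtx^{k-1}$ in (ii) supplying the linear anchor that makes each swap a consequence). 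The almost-linear, block-$1$-simple residue is then cleaned up by the method behind Lemma \ref{fb4}.

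The main obstacle is twofold. First, one must show that finitely many instances of (i) and (ii) suffice: the word ${\bf A}$ ranges over infinitely many almost-linear words, so one has to argue, using $\sigma_1,\sigma_2$ and the bound $k\le m$, that the $b$-unstable instances collapse into finitely many $\Sigma_0$-derivability classes. Second, the block-by-block bookkeeping is delicate exactly when a single block contains many interleaved occurrences of both $x$ and $y$: one must schedule the $\sigma_1/\sigma_2$ moves and the boundary moves coming from (i), (ii) and $x^m txyty\approx x^m tyxty$ so that every intermediate word still lies in $\var S$, and it is here that Theorem \ref{com} — which prevents a noncommuting substitution from collapsing two distinct blocks — and the isoterm status of $x^m y^m$ do the essential work.
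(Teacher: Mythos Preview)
The paper does not prove this lemma: it is quoted as Theorem~4.12 of \cite{OS2} and used as a black box, so there is no in-paper argument to compare your proposal against.

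As a standalone sketch, your architecture is the right one and is indeed the route taken in \cite{OS2}: assemble a finite $\Sigma_0$, reduce an arbitrary identity of $S$ to block-balanced form, and then derive block-balanced identities block by block using $\sigma_1,\sigma_2$ for interior swaps and the hypotheses (i), (ii) together with $x^{m}txyty \approx x^{m}tyxty$ for boundary swaps. But what you have written is an outline, not a proof. You explicitly name as ``the main obstacle'' the finiteness of the family of instances coming from (i) and (ii) and then do not resolve it; the assertion that ``the $b$-unstable instances collapse into finitely many $\Sigma_0$-derivability classes'' is the heart of the matter, and in \cite{OS2} it requires a careful canonical-form construction and an induction to establish. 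Likewise, the block-by-block rewriting schedule in your second stage is only gestured at, and your closing appeal to ``the method behind Lemma~\ref{fb4}'' is imprecise, since that lemma concerns monoids satisfying $\{\sigma_1,\sigma_\mu\}$ rather than $\{\sigma_1,\sigma_2\}$. In short, you have correctly identified the strategy and the genuine difficulties, but both difficulties are left open.
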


\begin{lemma} \label{oneword} Let $\bf U$ be a word with exactly two non-linear variables $a$ and $b$ such that ${\bf U} \not \preceq xtytxy$, ${\bf U} \not \preceq xytxty$ and $m > 1$ be the maximum such that ${\bf U} \preceq x^my^m$. Then

(i) modulo renaming variables, ${\bf U}={\bf C}t_1a^{\alpha}b^{\beta}t_2{\bf B}$ for some possible empty almost-linear words ${\bf C}t_1={\bf C}(a, \lin({\bf C}))t_1$ and $t_2{\bf B} = t_2{\bf B}(b, \lin({\bf B}))$ such that $min(\alpha, \beta) = m$;

(ii) $S(\{{\bf U}\})$ satisfies Conditions (i) and (ii) in Lemma \ref{fbtlem}.

\end{lemma}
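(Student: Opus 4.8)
The plan is to prove (i) by analysing the positions of $a$ and $b$ in $\bf U$, and then to derive (ii) from the resulting normal form using Corollaries \ref{subSW1} and \ref{subSW}.

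For (i): $\bf U$ is block-$2$-simple, having only two non-linear variables. Because $xtytxy$ is the left side of $\sigma_2$ and $xytxty$ the left side of $\sigma_1$, Theorem \ref{firstsim} turns the hypotheses ${\bf U}\not\preceq xtytxy$ and ${\bf U}\not\preceq xytxty$ into $S(\{{\bf U}\})\models\{\sigma_1,\sigma_2\}$; by Theorem \ref{firstsim}(ii) and Definition \ref{goodfact}(iii) this means every adjacent pair of occurrences of the two distinct non-linear variables of $\bf U$ has the form $\{{_{1{\bf U}}x},{_{last{\bf U}}y}\}$. First I would check, by a short case analysis, that the word ${\bf U}(a,b)\in\{a,b\}^*$ can have only one switch between $a$ and $b$: since each of $a,b$ occurs at least twice, a second switch always produces an adjacent pair that is neither a first occurrence of one variable nor a last occurrence of the other. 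Hence, after renaming, ${\bf U}(a,b)=a^Kb^L$ with $K,L\ge 2$, so every block of $\bf U$ contains only $a$ or only $b$ among non-linear variables, with at most one mixed block $a^\alpha b^\beta$ at the switch. Finally I would use $m>1$: $x^my^m$ is an isoterm for $S(\{{\bf U}\})$ exactly when $\bf U$ contains a contiguous subword $c^md^m$ with $c\ne d$ (a standard fact, deducible from Corollary \ref{subSW1} as in part (ii) below), so $\bf U$ contains $a^mb^m$ contiguously; since all $a$'s precede all $b$'s this factor lies inside a single block, so the mixed block exists and, by maximality of $m$, has $\min(\alpha,\beta)=m$. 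Taking ${\bf C}t_1$ to be everything in $\bf U$ before this block and $t_2{\bf B}$ everything after yields the asserted normal form, both parts being almost-linear because they contain, respectively, only $a$ and only $b$ among non-linear variables.

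For (ii): by the left–right dual (the reverse of $\bf U$ again has the shape of (i), with $a,b$ and ${\bf C},{\bf B}$ interchanged) it suffices to verify Condition (i) of Lemma \ref{fbtlem}. Fix an almost-linear word ${\bf A}x$ with $occ_{\bf A}(x)>0$, an integer $1<k\le m$, and assume $\{x,y\}$ is b-unstable in ${\bf A}xy^k$ with respect to $S(\{{\bf U}\})$; the goal is $S(\{{\bf U}\})\models{\bf A}xy^{k-1}ty\approx{\bf A}yxy^{k-2}ty$. This identity is block-balanced with exactly the two non-linear variables $x,y$, so by Corollary \ref{subSW1} I must rule out a substitution $\Theta$ with $\Theta(y)\Theta(x)\ne\Theta(x)\Theta(y)$ sending either side into ${\bf U}^c$. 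Part (i) gives the key facts that $\bf U$ has a unique occurrence of the factor $ab$ (inside $a^\alpha b^\beta$) and none of $ba$. Since $x$ occurs at least twice and $y$ occurs $k\ge 2$ times in each side, and each of their images must be realised as a factor of $\bf U$ in at least two places, neither $\Theta(x)$ nor $\Theta(y)$ can contain a variable linear in $\bf U$, nor the factor $ab$ or $ba$; hence both are non-empty powers of single letters of $\{a,b\}$, and of different letters by non-commutativity. For the right-hand side ${\bf A}yxy^{k-2}ty$ the explicit occurrences of $y$ and $x$ are adjacent, so its $ab$-content contains a factor $ba$, which is impossible in $\bf U$; thus only the left-hand side can embed, and there the ordering forces $\Theta(x)=a^i$, $\Theta(y)=b^j$ with $i,j\ge 1$, so the last letter of $\Theta({\bf A}x)$ is an $a$ immediately followed by a $b$ in $\bf U$, i.e. the $a^\alpha b^\beta$ transition, making $\Theta({\bf A}x)$ a suffix of ${\bf C}t_1a^\alpha$. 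Then the substitution $\Theta'$ that agrees with $\Theta$ except $\Theta'(y)=b$ has $\Theta'({\bf A}xy^k)=\Theta({\bf A}x)b^k$, a factor of $\bf U$ because $\beta\ge m\ge k$ consecutive $b$'s follow $\Theta({\bf A}x)$, and $\Theta'(y)\Theta'(x)\ne\Theta'(x)\Theta'(y)$. By Corollary \ref{subSW} this contradicts the b-instability of $\{x,y\}$ in ${\bf A}xy^k$, so no such $\Theta$ exists; this proves the identity, hence Condition (i), and Condition (ii) follows dually.

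The step I expect to require the most care is the reduction in part (ii) forcing $\Theta(x),\Theta(y)$ to be single-letter powers and pinning the left-hand image to the mixed block, namely the bookkeeping showing that an image occurring at least twice cannot reuse a linear variable of $\bf U$ or the unique $ab$-factor; together with, in part (i), the finite case analysis that two $ab$-switches always create a $\{\sigma_1,\sigma_2\}$-good adjacent pair. One should also make precise, via Lemma \ref{lemmaOS}, the statement that $x^my^m$ is an isoterm for $S(\{{\bf U}\})$ iff $\bf U$ has a contiguous factor $c^md^m$ with $c\ne d$.
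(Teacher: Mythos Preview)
Your proposal is correct and follows essentially the same route as the paper's proof: for (i) you invoke Theorem~\ref{firstsim} with $\Sigma=\{\sigma_1,\sigma_2\}$ to force a single $ab$-switch and then use the isoterm $x^my^m$ to locate the mixed block, and for (ii) you reduce to Condition~(i) by duality, argue that no non-commuting $\Theta$ can send either side of ${\bf A}xy^{k-1}ty\approx{\bf A}yxy^{k-2}ty$ into ${\bf U}^c$, and derive a contradiction with b-instability via Corollary~\ref{subSW} using the modified substitution $\Theta'(y)=b$. The paper does exactly this, only more tersely (it simply says ``Evidently, $\Theta({\bf A}yxy^{k-2}ty)$ cannot be a subword of $\bf U$'' and cites Lemma~\ref{lemmaOS} rather than Corollary~\ref{subSW1}, which is equivalent here). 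Your more explicit bookkeeping---forcing $\Theta(x),\Theta(y)$ to be powers of single letters because each appears at least twice disjointly and $\bf U$ has a unique $ab$-factor and no $ba$-factor---is exactly the content behind the paper's ``evidently''; just note that when you dismiss the right-hand side you should also cover the case $\Theta(x)=b^i$, $\Theta(y)=a^j$, which is ruled out by the same $(a,b)$-projection argument (an $x$ in $\bf A$ puts a $b$ before the final $a^j$).
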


\begin{proof} (i) Using Theorem \ref{firstsim} for $\Sigma = \{\sigma_1, \sigma_2\}$ we conclude that every adjacent pair of occurrences $a$ and $b$ in ${\bf U}$ is of the form $\{{_{1{\bf u}}a}, {_{last{\bf u}}b} \}$ or $\{{_{1{\bf u}}b}, {_{last{\bf u}}a}\}$. Therefore, the word $\bf U$ contains only one adjacent
 pair of occurrences $a$ and $b$. Since $m > 1$ the word $\bf U$ must be as described.

(ii) Since Conditions (i) and (ii) in Lemma \ref{fbtlem} are dual, we check only Condition (i).
Let $1<k \le m$ and ${\bf A}x$ be an almost-linear word with $occ_{\bf A}(x)>0$ such that the pair $\{x,y\}$ is b-unstable in ${\bf A}xy^k$ with respect to $S(\{\bf U \})$. Let $\Theta: \mathfrak A \rightarrow \mathfrak A^*$ be a substitution such that $\Theta(y) \Theta(x) \ne \Theta(x) \Theta(y)$. Evidently, $\Theta({\bf A}yxy^{k-2}ty)$ is not a subword of $\bf U$. If $\Theta({\bf A}xy^{k-1}ty)$ is a subword of ${\bf U}={\bf C}t_1a^{\alpha}b^{\beta}t_2{\bf B}$, then $\Theta(x)=a^l$, $\Theta(y)=b^r$ for some $l,r>0$ and the word $\Theta({\bf A}x)$ is a suffix of ${\bf C}t_1a^{\alpha}$. Let $\Theta'$ be a substitution which coincides with $\Theta$ on all variables other than $y$ and
$\Theta'(y)=b$. Since $k \le \beta$, the word $\Theta'({\bf A}xy^k)$ is a subword of $\bf U$. Then by Corollary \ref{subSW}, the pair $\{x,y\}$ is b-stable in ${\bf A}xy^k$ with respect to $S(\{\bf U \})$. To avoid a contradiction, we conclude that the word $\Theta({\bf A}xy^{k-1}ty)$ is not a subword of $\bf U$. So, by Lemma \ref{lemmaOS}, we have that $S(\{{\bf U}\}) \models {\bf A}xy^{k-1}ty \approx {\bf A}yxy^{k-2}ty$. This means that $S(\{{\bf U}\})$ satisfies Condition (i) in Lemma \ref{fbtlem}.
\end{proof}

\begin{lemma} \label{ident} Let $\bf U$ be a word with two non-linear variables such that ${\bf U} \not \preceq xtytxy$, ${\bf U} \not \preceq xytxty$  and
$m > 1$ be the maximum such that ${\bf U} \preceq x^my^m$. Then the following conditions are equivalent:

(i)  ${\bf U} \not \preceq \{ x^{m}txyty,  ytyxtx^{m}\}$;

(ii)  One of the words $\{a^m{\bf U}_1ab{\bf U}_2b, a{\bf U}_1ab{\bf U}_2b^m \}$ is not a subword of $\bf U$ for any ${\bf U}_1, {\bf U}_2 \in \mathfrak A ^*$;

(iii) The monoid $S(\{{\bf U}\})$ satisfies either $x^{m}txyty \approx x^{m}tyxty$ or $xtxyty^{m} \approx xtyxty^{m}$.

\end{lemma}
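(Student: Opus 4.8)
The plan is to use condition (ii) as a hub: I will prove (ii) $\Leftrightarrow$ (iii) by a direct calculation, observe that (iii) $\Rightarrow$ (i) is immediate, and derive (i) $\Rightarrow$ (ii) from an isoterm-rigidity argument. Throughout I use the normal form from Lemma \ref{oneword}(i), namely ${\bf U}={\bf C}t_1a^{\alpha}b^{\beta}t_2{\bf B}$ with ${\bf C}t_1,\ t_2{\bf B}$ almost-linear as stated and $\min(\alpha,\beta)=m$. The three facts about this shape that carry the argument are: (a) ${\bf U}$ has no factor $ba$, since every occurrence of $a$ precedes every occurrence of $b$ and the only adjacency between them is the junction $ab$; (b) every factor of ${\bf U}$ of length at least $2$ lying in $\{a,b\}^{*}$ sits inside $a^{\alpha}$, inside $b^{\beta}$, or across the junction; (c) each linear variable occurs once in ${\bf U}$. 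From (a)--(c) one checks, exactly as in the proof of Lemma \ref{oneword}, that $xtx$ and $t_1t_2$ are isoterms for $S(\{{\bf U}\})$. By Proposition \ref{pr1}, condition (i) says precisely that $x^mtxyty$ or $ytyxtx^m$ is not an isoterm for $S(\{{\bf U}\})$; since $ytyxtx^m$ equals $xtxyty^m$ after renaming $x\leftrightarrow y$, this reads ``$x^mtxyty$ or $xtxyty^m$ is not an isoterm''. Finally, reversing words together with the letter-swap $a\leftrightarrow b$ sends ${\bf U}$ to a word of the same shape, sends a subword of the form $a^m{\bf U}_1ab{\bf U}_2b$ to one of the form $a{\bf U}'_1ab{\bf U}'_2b^m$, and sends the identity $x^mtxyty\approx x^mtyxty$ to $xtxyty^m\approx xtyxty^m$; so it suffices to treat the first alternative of each of (i), (ii), (iii) and invoke this duality for the second.

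For (ii) $\Leftrightarrow$ (iii): the identity $x^mt_1xyt_2y\approx x^mt_1yxt_2y$ is non-trivial, block-balanced and has exactly the non-linear variables $x,y$, so Corollary \ref{subSW1} applies. If $\Theta(y)\Theta(x)\ne\Theta(x)\Theta(y)$ and one of the images $\Theta(x^mt_1xyt_2y),\ \Theta(x^mt_1yxt_2y)$ is a subword of ${\bf U}$, then since this image contains $\Theta(x)\Theta(x)$ and $\Theta(y)\,\Theta(t_2)\,\Theta(y)$, facts (b) and (c) force $\Theta(x),\Theta(y)\in\{a,b\}^{*}$; non-commutativity plus (a) then make one of them a positive power of $a$ and the other of $b$, and moreover $\Theta(x^mt_1yxt_2y)$ in either case contains the factor $ba$, so by (a) it is never a subword of ${\bf U}$ and for $\Theta(x^mt_1xyt_2y)$ the only surviving case is $\Theta(x)=a^i,\Theta(y)=b^j$ with $i,j\ge 1$. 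In that case $\Theta(x^mt_1xyt_2y)=a^{mi}\Theta(t_1)a^ib^j\Theta(t_2)b^j$ is a subword of ${\bf U}$ exactly when $a^m{\bf U}_1ab{\bf U}_2b$ is one for suitable words (take ${\bf U}_1=a^{m(i-1)}\Theta(t_1)a^{i-1}$, ${\bf U}_2=b^{j-1}\Theta(t_2)b^{j-1}$), and conversely every $a^m{\bf U}_1ab{\bf U}_2b\in{\bf U}^c$ is the image of $x^mt_1xyt_2y$ under $\Theta(x)=a,\Theta(y)=b,\Theta(t_1)={\bf U}_1,\Theta(t_2)={\bf U}_2$. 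Hence, by Corollary \ref{subSW1}, $S(\{{\bf U}\})\models x^mtxyty\approx x^mtyxty$ if and only if no $a^m{\bf U}_1ab{\bf U}_2b$ is a subword of ${\bf U}$, which is the first alternative of (ii); the dual computation gives the second, and taking disjunctions proves (ii) $\Leftrightarrow$ (iii). Also (iii) $\Rightarrow$ (i), since a monoid satisfying the non-trivial identity $x^mtxyty\approx x^mtyxty$ does not have $x^mtxyty$ as an isoterm.

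For (i) $\Rightarrow$ (ii) I argue the contrapositive. Assume $a^m{\bf U}_1ab{\bf U}_2b$ and $a{\bf U}'_1ab{\bf U}'_2b^m$ are subwords of ${\bf U}$; I must show $x^mtxyty$ and $ytyxtx^m$ are isoterms for $S(\{{\bf U}\})$, and by the duality above it is enough to do $x^mtxyty$. Suppose $S(\{{\bf U}\})\models x^mt_1xyt_2y\approx{\bf v}$ with ${\bf v}\ne x^mt_1xyt_2y$. Sending all but one variable to the identity of $S(\{{\bf U}\})$, and using that $S(\{{\bf U}\})$ has a nonzero non-identity element, forces $\con({\bf v})=\{x,y,t_1,t_2\}$ with $t_1,t_2$ linear in ${\bf v}$; Corollary \ref{subSW} with $\Theta(x)=a,\Theta(y)=b,\Theta(t_1)={\bf U}_1,\Theta(t_2)={\bf U}_2$ shows $\{x,y\}$ is b-stable in $x^mt_1xyt_2y$ with respect to $S(\{{\bf U}\})$, which rules out the one non-trivial block-balanced continuation $x^mt_1yxt_2y$; and the witness $a^m{\bf U}_1ab{\bf U}_2b$, read through (b), shows that ${\bf U}$ accommodates the pattern ``$a^m$, gap, $ab$'' so tightly that an extra occurrence of $x$ (and, using the second witness together with the structure to the right of the junction, of $y$) pushes the image out of ${\bf U}^c$, while $t_1t_2$ being an isoterm blocks any reordering of the linear variables. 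Hence ${\bf v}=x^mt_1xyt_2y$, a contradiction, so $x^mtxyty$ is an isoterm for $S(\{{\bf U}\})$.

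The delicate point, and what I expect to be the main obstacle, is this last step: ruling out every ``spurious'' non-block-balanced partner ${\bf v}$ of $x^mtxyty$ (extra occurrences of a non-linear variable, or a rearrangement of the linear variables). Here one must read enough isoterm information out of the precise form of ${\bf U}$ given by Lemma \ref{oneword}(i) and exploit the specific witnessing subwords guaranteed by the negation of (ii) to see that ${\bf U}$ accommodates $x^mtxyty$ only minimally; by contrast, the (ii) $\Leftrightarrow$ (iii) half is a routine run of Corollary \ref{subSW1} in which the ``no $ba$ factor'' property of ${\bf U}$ does the real work, collapsing an arbitrary non-commuting substitution to $x\mapsto a,\ y\mapsto b$.
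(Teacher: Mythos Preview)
Your treatment of (ii) $\Leftrightarrow$ (iii) via Corollary~\ref{subSW1} is correct and is essentially the paper's argument for (ii) $\Rightarrow$ (iii), carried out in both directions; (iii) $\Rightarrow$ (i) is immediate, as in the paper.

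The gap is in your $\neg$(ii) $\Rightarrow$ $\neg$(i). You try to prove directly that $x^{m}t_1xyt_2y$ is an isoterm for $S(\{{\bf U}\})$ by eliminating every candidate partner ${\bf v}$, but the final paragraph is not a proof: phrases like ``accommodates the pattern so tightly that an extra occurrence of $x$ pushes the image out of ${\bf U}^c$'' do not establish anything. Concretely, to pin down ${\bf v}(x,t_1,t_2)=x^{m}t_1xt_2$ you would need $x^{m}t_1xt_2$ (hence $x^{m}tx$) to be an isoterm, and nothing you have written forces that. If ${\bf U}_1$ happens to be a power of $a$, the witness $a^{m}{\bf U}_1ab{\bf U}_2b$ gives you no linear letter between the two $a$-runs and the argument stalls.

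The paper's route is different and hinges on the \emph{maximality of $m$}, which you never invoke. From $\neg$(ii) one has both $a^{m}{\bf U}_1ab{\bf U}_2b$ and $a{\bf U}_3ab{\bf U}_4b^{m}$ as subwords of ${\bf U}$. If neither ${\bf U}_1$ nor ${\bf U}_4$ contained a linear letter of ${\bf U}$, then (by the shape from Lemma~\ref{oneword}(i)) ${\bf U}_1\in a^{*}$ and ${\bf U}_4\in b^{*}$, so $a^{m+1}b$ and $ab^{m+1}$ would both be subwords, forcing $\alpha,\beta\ge m+1$ and hence ${\bf U}\preceq x^{m+1}y^{m+1}$, contradicting the choice of $m$. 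So, say, ${\bf U}_1$ contains a linear letter; since $\alpha\ge m$, one then finds a subword $a^{m}{\bf U}_5a^{m}b{\bf U}_2b$ with ${\bf U}_5$ containing a linear letter, which yields ${\bf U}\preceq\{x^{m}tx,\,xtx^{m}\}$ and consequently ${\bf U}\preceq\{x^{m}txyty,\,ytyxtx^{m}\}$. This maximality step is exactly the missing idea: any completion of your direct isoterm argument would have to manufacture that linear separator, and the maximality of $m$ is what produces it.
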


\begin{proof} In view of Lemma \ref{oneword}, modulo renaming variables, ${\bf U}={\bf C}t_1a^{\alpha}b^{\beta}t_2{\bf B}$ for some possible empty almost-linear words ${\bf C}t_1={\bf C}(a, \lin({\bf C}))t_1$ and $t_2{\bf B} = t_2{\bf B}(b, \lin({\bf B}))$ such that $min(\alpha, \beta) \ge m>1$.

$\neg (ii) \rightarrow \neg (i)$ Suppose that for some ${\bf U}_1, {\bf U}_2, {\bf U}_3, {\bf U}_4  \in \mathfrak A ^*$, the word $\bf U$ contains
a subword   $a^m{\bf U}_1ab{\bf U}_2b$ and a subword $a{\bf U}_3ab{\bf U}_4b^m$. Then either ${\bf U}_1$ or ${\bf U}_4$ contains a linear variable in $\bf U$ (otherwise $\bf U$ would had contained a subword  $a^{m+1}b$   and a subword   $ab^{m+1}$ which contradicts to the choice of $m$). Since all conditions are symmetric, without loss of generality we may assume  that the word ${\bf U}_1$ contains some variable $t$ linear in $\bf U$. Then by the choice of $m$, the word
$\bf U$ contains a subword $a^m {\bf U}_5 a^mb {\bf U}_2 b$ for some word ${\bf U}_5$ that contains $t$. Then ${\bf U} \preceq \{x^mtx, xtx^m\}$.
Consequently,   ${\bf U} \preceq  \{ x^{m}txyty,  ytyxtx^{m}\}$.

 $(ii) \rightarrow (iii)$  Suppose that
 the word $a^m{\bf U}_1ab{\bf U}_2b$ is not a subword of $\bf U$ for any ${\bf U}_1, {\bf U}_2 \in \mathfrak A ^*$.
Let $\Theta: \mathfrak A \rightarrow \mathfrak A^*$ be a substitution such that
 $\Theta(y) \Theta(x) \ne \Theta(x) \Theta(y)$.

 If  $\Theta(x)$ or $\Theta(y)$ is not a power of a variable then $\Theta(x^{m}txyty) \preceq xytxty$ or $\Theta(x^{m}txyty) \preceq xytxty$.
So, we may assume that  $\Theta(x)=a^l$ and $\Theta(y)=b^r$ for some $l,r>0$.
 Then $\Theta(x^{m}tyxty)$ is not a subword of $\bf U$.

 Since the word $a^m{\bf U}_1ab{\bf U}_2b$ is not a subword of $\bf U$ for any ${\bf U}_1, {\bf U}_2 \in \mathfrak A ^*$,
 the word $\Theta(x^{m}txyty )$ also is not a subword of $\bf U$.
Lemma \ref{lemmaOS} implies that the monoid $S(\{{\bf U}\})$ satisfies $x^{m}txyty \approx x^{m}tyxty$.

Implication $(iii) \rightarrow (i)$ is obvious. \end{proof}

\section{Two algorithms for recognizing finitely based words among words with at most two non-linear variables}

\begin{theorem} \label{main}
Let ${\bf U}$ be a word with at most two non-linear variables and $m$ be the maximum such that ${\bf U} \preceq x^my^m$. Then $S(\{{\bf U}\})$ is finitely based if and only if $\bf U$ satisfies each of the following conditions:

(i) At least two of the words $\{xytxty, xtytxy, xtxyty\}$ are not isoterms for $S(\{{\bf U}\})$;

(ii) If ${\bf U} \preceq xtxyty$ then $m>1$ and one of the words $\{ytyxtx^{m}, x^{m}txyty\}$ is not an isoterm for $S(\{{\bf U}\})$.
\end{theorem}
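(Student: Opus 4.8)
\emph{Setup and reductions.} By Theorem~\ref{firstsim}, for a subset $\Sigma\subseteq\{\sigma_1,\sigma_\mu,\sigma_2\}$ we have $S(\{\mathbf{U}\})\models\Sigma$ if and only if no word of $L_\Sigma$ is an isoterm for $S(\{\mathbf{U}\})$; since $L_{\sigma_1}=xytxty$, $L_{\sigma_\mu}=xtxyty$ and $L_{\sigma_2}=xtytxy$, condition (i) says exactly that $S(\{\mathbf{U}\})$ satisfies at least two of $\sigma_1,\sigma_\mu,\sigma_2$. A word with at most two non-linear variables is block-$2$-simple (a block contains only non-linear variables), so the results of Sections~4--6 apply to $\mathbf{U}$; also $x^my^m$ is an isoterm for $S(\{\mathbf{U}\})$ for some $m\ge 1$ as soon as $\mathbf{U}$ has a non-linear variable. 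Finally, $S(\{\mathbf{U}\})$ is finitely based iff $S(\{\overleftarrow{\mathbf{U}}\})$ is, while conditions (i)--(ii) and the integer $m$ are invariant under reversal (because $xytxty$, $xtxyty$, $xxyy$ reverse, up to renaming the two variables, to $xtytxy$, $xtxyty$, $xxyy$, and $x^mtxyty$ reverses to $ytyxtx^m$); so we may freely replace $\mathbf{U}$ by its reverse.

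\emph{Sufficiency.} Assume (i) and (ii). If $S(\{\mathbf{U}\})$ satisfies $\sigma_\mu$ together with $\sigma_1$ or $\sigma_2$, it is finitely based by Lemma~\ref{fb4} (Lee's theorem) or its dual. Otherwise $S(\{\mathbf{U}\})\models\{\sigma_1,\sigma_2\}$ but $S(\{\mathbf{U}\})\not\models\sigma_\mu$, so $\mathbf{U}\preceq xtxyty$ and (ii) gives $m>1$ together with $\mathbf{U}\not\preceq\{x^mtxyty,ytyxtx^m\}$. By Lemma~\ref{ident}, $S(\{\mathbf{U}\})$ satisfies $x^mtxyty\approx x^mtyxty$ or $xtxyty^m\approx xtyxty^m$; after passing to the reverse word if needed, assume the first. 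Since $\mathbf{U}\not\preceq xtytxy$ and $\mathbf{U}\not\preceq xytxty$ (as $S(\{\mathbf{U}\})\models\sigma_2,\sigma_1$), Lemma~\ref{oneword} shows $S(\{\mathbf{U}\})$ satisfies conditions (i) and (ii) of Lemma~\ref{fbtlem}; together with $x^my^m$ being an isoterm and $S(\{\mathbf{U}\})\models\{\sigma_1,\sigma_2,x^mtxyty\approx x^mtyxty\}$, Lemma~\ref{fbtlem} yields that $S(\{\mathbf{U}\})$ is finitely based.

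\emph{Necessity.} Assume (i) or (ii) fails; we show $S(\{\mathbf{U}\})$ is NFB, splitting on which of $\sigma_1,\sigma_2,\sigma_\mu$ fail. If (i) fails at least two of them fail. \emph{Case A: $\sigma_\mu$ fails and exactly one of $\sigma_1,\sigma_2$ fails.} After reversal say $\sigma_1$ fails, $\sigma_2$ holds; then $xytxty$ is an isoterm, $xtytxy$ is not, and $\mathbf{U}\preceq xtxyty$, so Corollary~\ref{fb1l} forces $\mathbf{U}$ NFB (its alternative conclusion would require $xytxty$ not to be an isoterm). \emph{Case B: $\sigma_1$ and $\sigma_2$ both fail.} Then $\mathbf{U}\preceq\{xytxty,xtytxy\}$, so by Corollary~\ref{fb2} either $\mathbf{U}$ is NFB or $\mathbf{U}\preceq\{xytxy,xytyx\}$. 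In the latter case $\sigma_\mu$ also fails: if it held, Theorem~\ref{firstsim} would force the only adjacent pairs of occurrences of distinct non-linear variables in $\mathbf{U}$ to be the first--first and last--last pairs, and these cannot simultaneously witness $\mathbf{U}\preceq xytxy$ and $\mathbf{U}\preceq xytyx$. Hence $\mathbf{U}\preceq\{xtxyty,xytxy,xytyx\}$, and we invoke Theorem~\ref{t1}(vi) (Row~6 of Table~\ref{classes}): if $\mathbf{U}\not\preceq\mathbf{n}$ for every $\mathbf{n}\in\{xyxyx\}\cup\{yx^my\mid m>1\}$ then $\mathbf{U}$ is NFB; otherwise $\mathbf{U}\preceq xyxyx$ or $\mathbf{U}\preceq yx^my$ for some $m>1$, which (up to renaming) makes $\mathbf{U}$ contain $ababa$ or $ba^\beta b$ with $\beta>1$, so $\mathbf{U}$ is NFB by Lemma~\ref{ababa} or Lemma~\ref{abba}. \emph{Case C: (i) holds but (ii) fails.} Then $S(\{\mathbf{U}\})\models\{\sigma_1,\sigma_2\}$, $S(\{\mathbf{U}\})\not\models\sigma_\mu$ (so $\mathbf{U}\preceq xtxyty$, $\mathbf{U}\not\preceq xytxty$, $\mathbf{U}\not\preceq xtytxy$), and either $m\le 1$ or $\mathbf{U}\preceq\{x^mtxyty,ytyxtx^m\}$. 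If $m\le 1$, Corollary~\ref{fb1l} applies (both $xytxty,xtytxy$ fail to be isoterms) and forces $\mathbf{U}$ NFB, since its alternative "$\mathbf{U}\preceq xxyy$" would give $m\ge 2$. If $m>1$ and both $x^mtxyty,ytyxtx^m$ are isoterms, then by Lemma~\ref{ident} $S(\{\mathbf{U}\})$ satisfies neither $x^mtxyty\approx x^mtyxty$ nor $xtxyty^m\approx xtyxty^m$, and we conclude that $S(\{\mathbf{U}\})$ lies in one of the NFB intervals of Section~4 — concretely Row~7 of Table~\ref{classes} (Theorem~\ref{t1}(vii)) for a suitable $k$ — which amounts to verifying $\mathbf{U}\preceq I$ and $\mathbf{U}\not\preceq\mathbf{n}$ for every $\mathbf{n}\in N$ there, using the normal form of $\mathbf{U}$ from Lemma~\ref{oneword}.

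\emph{Main obstacle.} The substantive work is in Cases~B and~C: checking that the hypotheses of the relevant rows of Table~\ref{classes} genuinely hold — above all the condition $\mathbf{U}\preceq I$ — and matching each "escape route" ($\mathbf{U}\preceq xyxyx$, $\mathbf{U}\preceq yx^my$, or the rigidity of $x^mtxyty$ coming from Lemma~\ref{ident}) either to a long block or long alternation inside $\mathbf{U}$ (handled by Lemmas~\ref{abba} and~\ref{ababa}) or to one of the Section~4 intervals. The earlier sections are built precisely so that every such route leads to one of these two outcomes, so no new ideas are needed; the cost is an exhaustive case analysis of the block structure of $\mathbf{U}$, carried out with the help of Fact~\ref{abtba}, Corollary~\ref{subSW1}, and the normal form of Lemma~\ref{oneword}.
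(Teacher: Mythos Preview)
Your proposal follows essentially the same route as the paper, invoking the same lemmas (Corollaries~\ref{fb2}--\ref{fb1l}, Lemmas~\ref{abba}, \ref{ababa}, \ref{oneword}, \ref{ident}, \ref{fbtlem}, and Theorem~\ref{t1}) in the same roles, merely reorganized into a sufficiency/necessity split rather than the paper's initial case division on whether $\mathbf{U}\preceq\{xytxty,xtytxy\}$. The only place the paper is more explicit is your Case~C: it takes $k=m+1$ in Row~7 of Table~\ref{classes}, uses Lemma~\ref{oneword}(i) to obtain $\mathbf{U}\preceq\{ytyx^dtx^{m+1-d},\, x^{m+1-d}tx^dyty\mid 0<d\le m\}$ together with the fact that one of $\{x^{m+1}yty,\, ytyx^{m+1}\}$ fails to be an isoterm (by maximality of $m$), and then applies Theorem~\ref{t1}(vii) or its dual.
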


\begin{proof}
First, suppose that ${\bf U} \preceq \{xytxty, xtytxy\}$.
Then, in view of Corollary \ref{fb2} we may assume that  ${\bf U} \preceq \{xytxy, xytyx\}$.
Since $\bf U$ is a single word with two non-linear variables, the condition ${\bf U} \preceq \{xytxy, xytyx\}$ implies that  ${\bf U} \preceq xtxyty$.
Then by Corollary \ref{fb3}, the word $\bf U$ contains either
$ababa$ or $ab^{\beta}a$ for some $\beta>1$ as a subword.  Lemmas \ref{abba} and \ref{ababa} show that $\bf U$ is NFB in each of these cases.
So, if ${\bf U} \preceq \{xytxty, xtytxy\}$ then $\bf U$ is NFB.

Now suppose that ${\bf U} \not \preceq \{xytxty\}$ or ${\bf U} \not \preceq \{xtytxy\}$. If ${\bf U} \not \preceq xtxyty$ then $\bf U$ is hereditary finitely based
by Corollary \ref{11words}. If ${\bf U} \preceq xtxyty$ then by Corollary \ref{fb1l} either $\bf U$ is NFB or
${\bf U} \preceq xxyy$ and both words $\{xytxty, xtytxy\}$ are not isoterms for $S(\{{\bf U}\})$. In view of Theorem \ref{firstsim}, we have $S(\{{\bf U}\}) \models \{\sigma_1, \sigma_{2}\}$. Since ${\bf U} \preceq xxyy$, we have $m>1$. Consider two cases.

{\bf Case 1}: ${\bf U} \not \preceq \{ ytyxtx^{m}, x^{m}txyty \}$.

In this case, Lemma \ref{ident} implies that $S(\{{\bf U}\})$ satisfies either $x^{m}txyty \approx  x^{m}tyxty$ or $xtxyty^m \approx xtyxty^m$.
By Lemma \ref{oneword}, $S(\{{\bf U}\})$ satisfies Conditions (i) and (ii) in Lemma \ref{fbtlem}. Therefore, $S(\{{\bf U}\})$ is finitely based by Lemma \ref{fbtlem} or its dual.

{\bf Case 2}: ${\bf U} \preceq \{ ytyxtx^{m}, x^{m}txyty \}$.

In this case, Lemma \ref{oneword} (i) implies that
${\bf U} \preceq \{ ytyx^dtx^{m+1-d}, x^{m+1-d}tx^dyty | 0 <d \le m\}$ but one of the words $\{ x^{m+1}yty, ytyx^{m+1}\}$ is not an isoterm for $S$. Theorem \ref{t1}(vii) or its dual implies that $S(\{{\bf U}\})$ is non-finitely based.
\end{proof}

The next theorem gives us a computation-free way to recognize FB words among words with at most two non-linear variables.

\begin{theorem} \label{main2}
Let $\bf U$ be a word with at most two non-linear variables $a$ and $b$. Then the word $\bf U$  is finitely based if and only if $\bf U$ is either block-1-simple or there is a single block $\bf B$ for which ${\bf B} \not \in \{a^n, b^n | n \ge 0\}$ and this block satisfies one of the following conditions modulo renaming $a$ and $b$:

(i) ${\bf B} =ab^m$ for some $m > 0$ and $\bf B$ is the first non-empty block of $\bf U$;

(ii) ${\bf B} =b^ma$ for some $m > 0$ and $\bf B$ is the last non-empty block of $\bf U$;

(iii) ${\bf B}=a^{n}b^{m}$ such that  $min(n, m) =k>1$, $\bf U$ contains no $a$ to the right of $\bf B$,  no  $b$ to the left of $\bf B$
and one of the words $\{a {\bf U}_1ab {\bf U}_2b^k, a^k {\bf U}_1ab {\bf U}_2b\}$ is not a subword of ${\bf U}$ for any ${\bf U}_1, {\bf U}_2 \in \mathfrak A ^*$.

\end{theorem}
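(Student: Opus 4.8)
The plan is to deduce Theorem~\ref{main2} from Theorem~\ref{main} by converting the three isoterm conditions that appear there into explicit statements about the blocks of $\bf U$. First I would dispose of the degenerate cases: if $\bf U$ has at most one non-linear variable, or more generally is block-$1$-simple, then by Corollary~\ref{11words} the word $\bf U$ is hereditary finitely based, hence finitely based, which agrees with both sides of the asserted equivalence. So from now on assume $\bf U$ has exactly two non-linear variables $a,b$ and is not block-$1$-simple; since every block of $\bf U$ lies in $\{a,b\}^*$, this means some block depends on both $a$ and $b$ (call such a block \emph{mixed}), and every adjacent pair of occurrences of $a$ and $b$ in $\bf U$ lies inside a mixed block. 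The translation dictionary is supplied by Proposition~\ref{pr1} and Theorem~\ref{firstsim}: $xytxty$ (resp. $xtytxy$, resp. $xtxyty$) is an isoterm for $S(\{{\bf U}\})$ iff $S(\{{\bf U}\})\not\models\sigma_1$ (resp. $\sigma_2$, resp. $\sigma_\mu$), which by Theorem~\ref{firstsim} and Definition~\ref{goodfact} translates into a condition on the adjacent occurrences of $a$ and $b$ in $\bf U$. I would then split according to whether $xtxyty$ is an isoterm for $S(\{{\bf U}\})$, matching the two cases in the proof of Theorem~\ref{main}.

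If $xtxyty$ is not an isoterm, then condition (ii) of Theorem~\ref{main} is vacuous and condition (i) reduces to ``$S(\{{\bf U}\})\models\sigma_1$ or $\sigma_2$'', so that by Corollary~\ref{hfb1} being finitely based here coincides with being hereditary finitely based; by Corollary~\ref{11words} the latter says that every adjacent pair of occurrences of $a,b$ in $\bf U$ equals $\{{_{1{\bf U}}a},{_{1{\bf U}}b}\}$, or every such pair equals $\{{_{last{\bf U}}a},{_{last{\bf U}}b}\}$. I would check that, for a word with a unique mixed block $\bf B$, the first alternative forces ${\bf B}=ab^m$ (modulo renaming $a$ and $b$) and forces $\bf B$ to be the first non-empty block of $\bf U$, which is Theorem~\ref{main2}(i), while the second alternative dually gives Theorem~\ref{main2}(ii); conversely, (i) and (ii) of Theorem~\ref{main2} each put all adjacent $a,b$-pairs into one of these two shapes, hence give hereditary finite basis.

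If instead $xtxyty$ is an isoterm (${\bf U}\preceq xtxyty$), then condition (i) of Theorem~\ref{main} forces both $xytxty$ and $xtytxy$ to be non-isoterms, i.e. $S(\{{\bf U}\})\models\{\sigma_1,\sigma_2\}$; then Theorem~\ref{firstsim} with $\Sigma=\{\sigma_1,\sigma_2\}$ together with Definition~\ref{goodfact}(iii) yield, as in Lemma~\ref{oneword}, that $\bf U$ has exactly one adjacent $a,b$-pair, hence a unique mixed block $\bf B$ equal (after renaming) to $a^pb^q$, whose $a$--$b$ boundary is $\{{_{1{\bf U}}x},{_{last{\bf U}}y}\}$. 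One sub-possibility forces ${\bf B}=ab$, hence $\min(p,q)=1$, so $m=1$, condition (ii) of Theorem~\ref{main} fails, and $\bf U$ is non-finitely based; the other forces no occurrence of $a$ to the right of $\bf B$ and no occurrence of $b$ to the left of $\bf B$, which is the structural part of Theorem~\ref{main2}(iii), with $\min(p,q)$ equal to the parameter $m$ of Theorem~\ref{main}. In this sub-case condition (ii) of Theorem~\ref{main} also demands $m>1$ and ${\bf U}\not\preceq\{x^{m}txyty,\,ytyxtx^{m}\}$, and Lemma~\ref{ident} rewrites this last condition as ``one of $\{a^{m}{\bf U}_1ab{\bf U}_2b,\,a{\bf U}_1ab{\bf U}_2b^{m}\}$ is not a subword of $\bf U$'', i.e. exactly Theorem~\ref{main2}(iii). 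The converse, Theorem~\ref{main2}(iii) $\Rightarrow$ finitely based, runs the same equivalences backwards, using that a block $a^nb^m$ with no $a$ to its right and no $b$ to its left makes the unique adjacent $a,b$-pair equal to $\{{_{last{\bf U}}a},{_{1{\bf U}}b}\}$, hence $\{\sigma_1,\sigma_2\}$-bad, and that $\min(n,m)$ again equals $m$.

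The step I expect to be the main obstacle is the bookkeeping in these translations: turning ``every adjacent pair of occurrences of $a,b$ is $\Sigma$-bad'' into the precise block-theoretic conditions (i)--(iii) requires carefully handling all the boundary situations --- a mixed block with exponent $1$ on one side, a mixed block that is simultaneously the first and the last non-empty block, the presence or absence of pure $a$-power and $b$-power blocks on either side of $\bf B$, and the case $m=1$ --- and checking that the hypothesis $\min(n,m)>1$ of Theorem~\ref{main2}(iii) is forced exactly by the requirement $m>1$ in condition (ii) of Theorem~\ref{main}, with the renaming conventions of Lemma~\ref{oneword}, Lemma~\ref{ident} and Theorem~\ref{main2}(iii) kept compatible.
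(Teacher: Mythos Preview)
Your proposal is correct and follows essentially the same route as the paper: deduce Theorem~\ref{main2} from Theorem~\ref{main} by translating the isoterm conditions there into block-theoretic ones, using Corollary~\ref{11words}/Corollary~\ref{hfb} for the hereditary finitely based case (yielding conditions (i)--(ii)) and Theorem~\ref{firstsim} together with Lemmas~\ref{oneword} and~\ref{ident} for the $\{\sigma_1,\sigma_2\}$ case (yielding condition (iii)). The paper's own proof is much terser---it simply cites these results without spelling out the case split on whether $xtxyty$ is an isoterm or the two sub-possibilities you describe---but the underlying argument is the same.
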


\begin{proof} According to the proof of Theorem \ref{main}, the word $\bf U$ is FB if and only if $\bf U$ is hereditary FB or $S(\{{\bf U}\}) \models \{\sigma_1, \sigma_2 \}$, ${\bf U} \preceq xxyy$ and ${\bf U} \not \preceq \{ ytyxtx^{k}, x^{k}txyty \}$.

In view of Corollary \ref{11words}, the word $\bf U$ is hereditary finitely based if and only if $\bf U$ is block-1-simple or there is a single  block $\bf B$ for which ${\bf B} \not \in \{a^n, b^n | n \ge 0\}$ and this block satisfies either Condition (i) or Condition (ii).

In view of Theorem \ref{firstsim} and Lemma \ref{ident}, $S(\{{\bf U}\}) \models \{\sigma_1, \sigma_2 \}$, ${\bf U} \preceq xxyy$ and ${\bf U} \not \preceq \{ ytyxtx^{k}, x^{k}txyty \}$ if and only if $\bf U$ satisfies Condition (iii).
\end{proof}

Following Definition 5.1 in \cite{MJ1}, we say that a word $\bf U$ is {\em hereditary finitely based} (HFB) if each subword of $\bf U$ is finitely based.
If the monoid $S(\{{\bf U}\})$ is hereditary finitely based, then evidently, the word $\bf U$ is HFB in the sense of Definition 5.1 in \cite{MJ1}.

\begin{cor} \label{hfbsnfb} A word $\bf U$ with at most two non-linear variables is FB if and only if $\bf U$ is HFB in the sense of Definition 5.1 in \cite{MJ1}.
\end{cor}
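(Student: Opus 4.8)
The plan is to derive Corollary~\ref{hfbsnfb} directly from Theorem~\ref{main2} together with the already-established description of hereditary finitely based words in Corollary~\ref{hfb}. Recall that a word $\bf U$ is HFB in the sense of Definition 5.1 in \cite{MJ} if every subword of $\bf U$ is finitely based. The forward implication is the content of the corollary itself; the real work is the converse: if $\bf U$ (with at most two non-linear variables) is finitely based, then so is every subword of $\bf U$. Note that every subword of $\bf U$ has at most two non-linear variables as well, so Theorem~\ref{main2} applies uniformly to $\bf U$ and to all of its subwords.

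First I would reduce to the three cases produced by Theorem~\ref{main2}. If $\bf U$ is block-1-simple, then every subword of $\bf U$ is block-1-simple, hence hereditary finitely based by Corollary~\ref{hfb}(iii), and in particular FB; so $\bf U$ is HFB. If $\bf U$ falls under case (i) or case (ii) of Theorem~\ref{main2}, then $\bf U$ is hereditary finitely based by the proof of Theorem~\ref{main2} (it lies in $Max(\mathfrak A^*,\sigma_1,\sigma_\mu)$ or $Max(\mathfrak A^*,\sigma_2,\sigma_\mu)$), and since these $Max$-classes are closed under taking subwords (an adjacent pair of last occurrences stays a pair of last occurrences after deleting letters, and similarly for first occurrences), every subword of $\bf U$ is again hereditary finitely based, hence FB. So in cases (i) and (ii) the conclusion is immediate.

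The remaining and only delicate case is (iii): $\bf U$ has a single non-trivial block ${\bf B}=a^nb^m$ with $k=\min(n,m)>1$, no $a$ to the right of $\bf B$, no $b$ to the left of $\bf B$, and one of $\{a{\bf U}_1ab{\bf U}_2b^k, a^k{\bf U}_1ab{\bf U}_2b\}$ fails to be a subword of $\bf U$. I would check that each of these structural properties is inherited by an arbitrary subword $\bf U'$ of $\bf U$. Deleting letters cannot create new blocks and cannot move an occurrence of $a$ to the right of $\bf B$ or of $b$ to its left; and if, say, $a^k{\bf U}_1ab{\bf U}_2b$ is not a subword of $\bf U$ then it is certainly not a subword of $\bf U'$, while the parameter $\min$ of the surviving block can only decrease or the block can degenerate into $\{a^j,b^j\}$, in which case $\bf U'$ is block-1-simple. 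One must also handle the possibility that deleting letters drops the block exponent below $2$ or removes one of $a,b$ entirely — but in each such degenerate situation $\bf U'$ becomes block-1-simple, hence FB. Thus in all cases $\bf U'$ still satisfies the hypotheses of one of the three cases of Theorem~\ref{main2} (or is block-1-simple), so $\bf U'$ is FB; therefore $\bf U$ is HFB.

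The main obstacle I anticipate is the careful bookkeeping in case (iii): one has to enumerate the ways a subword can degenerate (losing $a$, losing $b$, shrinking a block exponent, merging nothing since deletion never merges blocks) and confirm that none of these transitions can turn a word satisfying the Theorem~\ref{main2} criterion into one that violates it. This is entirely routine combinatorics on words once the case split is set up, and it relies on no new ideas beyond the observation that the defining conditions of cases (i)--(iii) are ``downward closed'' under the subword order. Conversely, the direction ``HFB $\Rightarrow$ FB'' is trivial since $\bf U$ is a subword of itself. Combining the two directions yields Corollary~\ref{hfbsnfb}.
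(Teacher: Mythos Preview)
Your approach is essentially the paper's: reduce via Theorem~\ref{main2}, dispose of the block-1-simple case and cases (i)--(ii) via the hereditary finite basis property (the paper invokes Corollary~\ref{11words} here), and in case (iii) verify directly that every subword is either block-1-simple or again satisfies condition (iii). One terminological slip: in this paper ``subword'' means \emph{factor}, not the result of deleting arbitrary letters --- but the closure claims you invoke are all correct for factors, and the paper's own treatment of the case-(iii) inheritance is just as terse as yours.
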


\begin{proof} According to Theorem \ref{main2}, if $\bf U$ is finitely based then either $\bf U$ is block-1-simple or $\bf U$ contains a single adjacent pair
$\{c, d\} \subseteq \ocs({\bf U})$ of occurrences of $a$ and $b$ such that either $\{c,d\} = \{{_{1{\bf u}}a}, {_{1{\bf u}}b} \}$ or $\{c,d\} = \{{_{last{\bf u}}a}, {_{last{\bf u}}b} \}$ or $\{c,d\} = \{{_{1{\bf u}}a}, {_{last{\bf u}}b} \}$. If  $\{c,d\} = \{{_{1{\bf u}}a}, {_{1{\bf u}}b} \}$ or $\{c,d\} = \{{_{last{\bf u}}a}, {_{last{\bf u}}b} \}$ then by Corollary \ref{11words}, the monoid $S(\{{\bf U}\})$ is hereditary finitely based, and consequently, the word $\bf U$ is HFB.

If $\{c,d\} = \{{_{1{\bf u}}a}, {_{last{\bf u}}b} \}$
then $\bf U$ satisfies Condition (iii) in Theorem \ref{main2}. Then each subword of $\bf U$ is either block-1-simple or also satisfies Condition (iii). In any case the word $\bf U$ is HFB.
\end{proof}

\begin{ex} Let  ${\bf U} =aat_1aabbt_2bb$. Then

 (i) the set $\{{\bf U}, a^4b^4\}$ is FB.

(ii) the word $\bf U$ is NFB.

(iii) each subword of $\bf U$ is FB.
\end{ex}

\begin{proof} (i) The set $\{{\bf U}, a^4b^4\}$ is finitely based by Lemma \ref{fbtlem} or by Theorem 4.4 in \cite{OS2}.

(ii) The word $\bf U$ is NFB  by Theorem \ref{main}  because  ${\bf U} \preceq \{xxtxyty, ytyxtxx\}$.

(iii) The word  ${\bf V}=aataabbtb$ is FB  by Theorem \ref{main2}  because the word $a{\bf U}_1ab{\bf U}_2bb$ is not a subword of $\bf V$ for any ${\bf U}_1, {\bf U}_2 \in \mathfrak A^*$. So, each subword of $\bf U$ is FB by symmetry and Corollary \ref{hfbsnfb}.
\end{proof}

\section{Seven NFB intervals between sets of block-2-simple words}

If $\Sigma$ is a set of identities and $L \subseteq {\mathfrak A}^*$  then $\iso(L, \Sigma)$ denotes the set of all words in $L$ that are isoterms for $\var \Sigma$.

\begin{fact}\label{last}
The set $W= \operatorname{Isot}(L, \Sigma)$ is the largest subset of $L$ such that $S(W)$ is contained in $\operatorname{var} \Sigma$.
\end{fact}

\begin{proof} Let $U$ be a subset of $L$ such that $S(U)$ is contained in $\var \Sigma$.
Then by Lemma \ref{prec} each word in $U$ is an isoterm for $\var \Sigma$. Therefore, $U$ is a subset of $\iso(L, \Sigma)$.
\end{proof}

Calculating $\iso(\mathfrak A^*, \Sigma)$ for certain sets of identities $\Sigma$ will be useful in the next article \cite{OS3}.
Theorem \ref{firstsim} immediately implies the following.

\begin{cor} \label{unik} For each $\Sigma \subseteq \{\sigma_1, \sigma_{\mu}, \sigma_2\}$
 the set  $\operatorname{Isot}(\mathfrak A^*, \Sigma)$ consists of all words $\bf u$ such that every adjacent pair of occurrences of two distinct non-linear variables in ${\bf u}$ is $\Sigma$-bad.
\end{cor}

Corollaries \ref{unik} and \ref{11words} immediately imply the following.

\begin{cor} \label{hfb}
A set of words is hereditary finitely based if and only if it is a subset of one of the following:

(i)  $\operatorname{Isot}(\mathfrak A^*, \sigma_1, \sigma_\mu)$ is the set of all words $\bf u$ such that every adjacent pair of occurrences of two non-linear variables $x \ne y$ in ${\bf u}$ is of the form $\{{_{last{\bf u}}x}, {_{last{\bf u}}y} \}$;

(ii) $\operatorname{Isot}(\mathfrak A^*, \sigma_2, \sigma_\mu)$ is the set of all words $\bf u$ such that every adjacent pair of occurrences of two non-linear variables $x \ne y$ in ${\bf u}$ is of the form $\{{_{1{\bf u}}x}, {_{1{\bf u}}y} \}$;

(iii) $\operatorname{Isot}(\mathfrak A^*, \sigma_1, \sigma_\mu, \sigma_2) = \operatorname{Isot}(\mathfrak A^*, \sigma_1, \sigma_\mu) \cap \operatorname{Isot}(\mathfrak A^*, \sigma_2, \sigma_\mu)$ is the set of all block-1-simple words.

\end{cor}

In view of Proposition 6.1 in \cite{OS2}, the monoid $S(\iso(\mathfrak A^*, \sigma_1, \sigma_\mu, \sigma_2))$ is finitely based by $\{ \sigma_1, \sigma_\mu, \sigma_2\}$.
However, the monoid $S(\iso(\mathfrak A^*, \sigma_1, \sigma_\mu))$ satisfies the identity $xytxy \approx xytyx$ (see Lemma 2.4 in \cite{OS3}) which does not follow from $\{ \sigma_1, \sigma_\mu \}$.

If $W_1 \subseteq W_2$ are sets of words then we use $[W_1, W_2]$ to refer to the interval  between $\var S(W_1)$ and
$\var S(W_2)$ in the lattice of all semigroup varieties. If $B2$ denotes the set of all block-2-simple words then
Theorem \ref{t1} immediately implies the following.

\begin{cor}  \label{sevenint} Every monoid in each of the following intervals is NFB:

(i) $[\{xytxy\}, \operatorname{Isot}(B2, xytyx \approx yxtxy)]$;

(ii) $[\{xytyx\}, \operatorname{Isot}(B2, xytxy \approx yxtyx)$;

(iii) $[\{xytxty, xtytxy\}, \operatorname{Isot}(B2, \{xytyx \approx yxtxy, xytxy \approx yxtyx\})]$;

(iv) $[\{xtxyty\}, \operatorname{Isot}(B2, \{xxyy \approx yyxx, xytxy \approx yxtyx\})]$;

(v) $[\{xxyy, xytxty\}, \operatorname{Isot}(B2, \{xytyx \approx yxtxy, xytxy \approx yxtyx \})]$;

(vi) $[\{xtxyty, xytxy, xytyx\}, \operatorname{Isot}(B2, \{xyyx \approx yxxy\} \cup  \{x^myxyx \approx yx^{m+2}y | m \ge 1\})]$;

(vii) $[\{xxyy\} \cup \{ytyx^dtx^{k-d}, x^{k-d}tx^dyty | 0 <d < k\}, \operatorname{Isot} (B2, \{x^kyty \approx yx^kty, xytxty \approx yxtxty, xtytxy \approx xtytyx\})]$, $k>2$.

\end{cor}

Using Corollary \ref{nfbcombinations}, one can easily check that if $I$ and $N$ are in the same row of Table \ref{classes} then for every set of block-2-simple words $W$ such that $W \preceq I$ and $W \not \preceq {\bf n}$ for any ${\bf n} \in N$ the monoid $S(W)$ belongs to the corresponding interval in Corollary \ref{sevenint}.

\subsection*{Acknowledgement} The author thanks Gili Golan for her thoughtful comments and Victor Guba for his suggestion to prove Theorem \ref{com} by using the argument of Lyndon and Schupp. The author is also very grateful to the anonymous referee for the helpful comments and suggestions.

\end{document}